\setlist{nolistsep}
\DeclarePairedDelimiterX{\Iintv}[1]{\llbracket}{\rrbracket}{\iintvargs{#1}}
\NewDocumentCommand{\iintvargs}{>{\SplitArgument{1}{,}}m}
{\iintvargsaux#1} %
\NewDocumentCommand{\iintvargsaux}{mm} {#1\mkern1.5mu,\mkern1.5mu#2}
\def\intv#1[#2..#3]{\mathopen{#1[}#2\mathrel{{.}\,{.}}\nobreak#3\mathclose{#1]}}
\theoremstyle{plain}
\newtheorem{theorem}{Theorem}
\theoremstyle{definition}
\newtheorem{definition}[theorem]{Definition}
\theoremstyle{plain}
\newtheorem{proposition}[theorem]{Proposition}
\theoremstyle{plain}
\newtheorem{corollary}[theorem]{Corollary}
\theoremstyle{plain}
\newtheorem{lemma}[theorem]{Lemma}
\theoremstyle{definition}
\newtheorem{example}[theorem]{Example}
\theoremstyle{definition}
\newtheorem{remark}[theorem]{Remark}
\theoremstyle{plain}
\theoremstyle{plain}
\theoremstyle{plain}
\theoremstyle{plain}
\newtheorem{conjecture}[theorem]{Conjecture}
\title{Multi-Trek Separation in Linear Structural Equation Models}
\date{}
\author[*]{Elina Robeva}
\author[$\dag$]{Jean-Baptiste Seby}
\affil[*]{\textit{The University of British Columbia}}\affil[$\dag$]{\textit{Massachusetts Institute of Technology}}
\begin{document}
\maketitle
\vspace{-10mm}
\begin{abstract}
\justify
Building on the theory of causal discovery from observational data, we study interactions between 
multiple (sets of) random variables in a linear structural equation model with non-Gaussian error terms.
We give a correspondence between structure in the higher order cumulants and combinatorial structure in the causal graph. It has previously been shown that low rank of the covariance matrix corresponds to trek separation in the graph. Generalizing this criterion to multiple sets of vertices, we characterize when determinants of subtensors of the higher order cumulant tensors vanish. This criterion applies when  hidden variables are present as well. 
For instance, it allows us to identify the presence of a hidden common cause of $k$ of the observed variables.
\end{abstract}

\unmarkedfntext{\noindent \hspace{-0.33cm} Keywords: Graphical models, Independent Component Analysis, Trek Separation, High-order cumulants

\hspace{0.0001cm} {{MSC2020}} Subject Classification: {62R01, 62H22}}

\normalsize

\section{Introduction}\label{section_1}
\justify
Although randomized experiments are the most commonly used method for causal inference, they are sometimes not feasible for practical or ethical reasons. Because of these constraints, scientists often need to learn the structure of the graph underlying the relationships between variables based on purely observational data. 
Suppose that $G = (V, \mathcal D)$ is a directed acyclic graph (or DAG) with vertex set $V = \{1,...,p\}$ and edge set $\mathcal D \subseteq V \times V$. 
The graph $G$ gives rise to a {\em linear structural equation model} (LSEM), which consists of joint distributions of a random vector $X = (X_1,\ldots, X_p)$ in which the variable $X_i$ associated to vertex $i\in V$ is a  linear function of $X_j$, where $j$ varies over the parent set pa$(i)$ of $i$ (i.e., all vertices  $j\in V$ such that $j\to i\in \mathcal D$), and a noise term $\varepsilon_i$,

\begin{equation}\label{linear_structural_equation_model}
    X_i = \sum_{j\in\text{pa}(i)} \lambda_{ji}X_j + \varepsilon_i, \ i \in V.
\end{equation}

If no hidden variables are present, we assume that the noise terms $\varepsilon_i$ are mutually independent. To encode the presence of hidden variables, we allow dependencies between the $\varepsilon_i$ variables, and graphically we depict this via {\em multi-directed edges} (see Figure \ref{fig_1a}). These encode a hidden common cause of a few of the observed variables. We represent this more complicated hidden structure via a {\em mixed graph} $G = (V, \mathcal D, \mathcal H)$, where $\mathcal H$ is the set of multi-directed (hyper)edges (see Section \ref{hidden variables}).


\begin{figure}[H]\label{example_introduction}
	\centering
	\begin{subfigure}[b]{0.2\linewidth}
		\includegraphics[scale = 0.45]{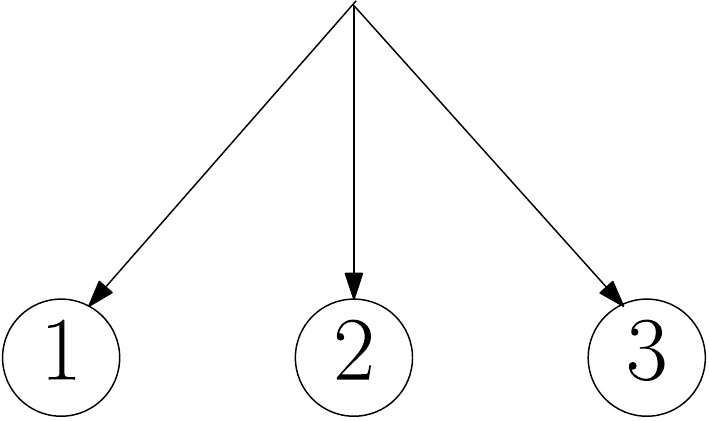}
	\caption{\label{fig_1a}}
	\end{subfigure}\hspace{0.05\textwidth}%
	\begin{subfigure}[b]{0.2\linewidth}
	\includegraphics[scale = 0.35]{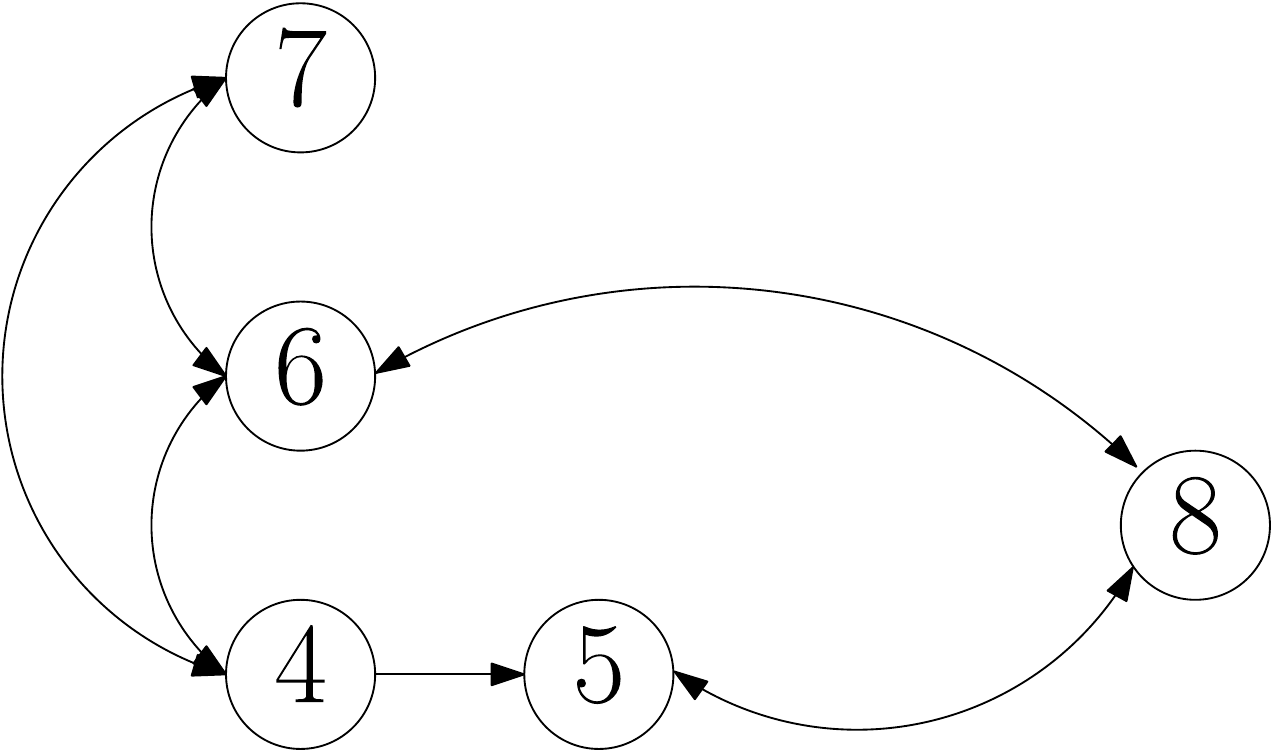}
	\caption{\label{fig:figure_DAG_intro_hidden_variables}}
	\end{subfigure}\hspace{0.1\textwidth}
		\begin{subfigure}[b]{0.2\linewidth}
		\includegraphics[scale = 0.2]{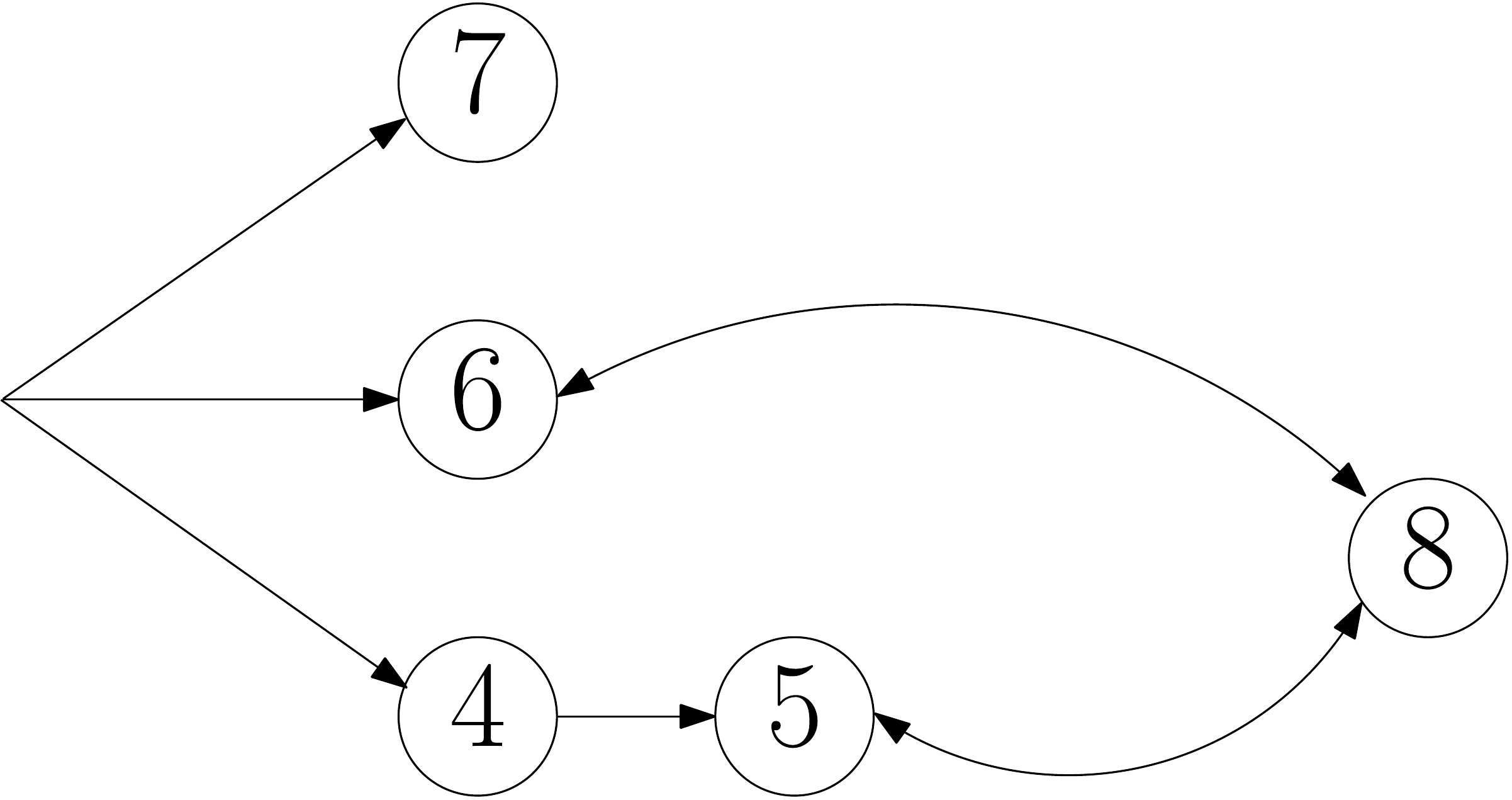}
	\caption{\label{fig:figure_DAG_intro_hidden_multi_edge}}
	\end{subfigure}
	\caption{\textit{\textbf{(a)} A multi-directed edge between nodes 1,2 and 3; \textbf{(b)} Mixed graph $G_1$; \textbf{(c)} Mixed graph $G_2$ }}
\end{figure}

When the noise terms $\varepsilon_i$ are Gaussian, then so are the $X_i$ variables. 
In this setting, the linear structural equation model given by a graph $G$ corresponds to the set of covariance matrices $\mathcal M^{(2)}(G)$ of a Gaussian distribution consistent with the graph $G$ \cite{lauritzen1996}. This is precisely the set of covariance matrices that possess a certain {\em parametrization} arising from the structure of $G$. Furthermore, bidirected edges suffice to parametrize the model $
\mathcal M^{(2)}(G)$ when hidden variables are present. For example, the mixed graphs $G_1$ and $G_2$ in the figure above give rise to the same model in Zariski closure $\overline{\mathcal M^{(2)}(G_1)} = \overline{\mathcal M^{(2)}(G_2)}$, because the two models have the same parametrization (via the trek rule~\cite{Sullivant2008}).
When the variables are non-Gaussian, we can depict the model using covariances as well as higher-order moments/cumulants of the random vector $X$. We denote by $\mathcal M^{(k)}(G)$ the set of cumulants of order $k$ consistent with the graph $G$, and by $\mathcal M^{(\leq k)}(G)$ the set of cumulants of order $i$ for $2\leq i \leq k$. These sets can also be parametrized using the graph (Definition \ref{parametrization}), and provide a more refined description of the graph structure. For instance, the two graphs above give rise to different models $\overline{\mathcal M^{(\leq3)}(G_1)}\neq \overline{\mathcal M^{(\leq3)}(G_2)}$. 

A parametrization of the model, however, may not always be  sufficient. Statistical problems like  model selection, model equivalence, and constraint based statistical inference often require an implicit description of the model in terms of (polynomial) equations which can be read off from the graph $G$, e.g., via a  combinatorial criterion. 

When $G$ is a DAG and the variables are Gaussian, the implicit description of the model $\mathcal M^{(2)}(G)$ is given by the vanishing of specific subdeterminants of the covariance matrix which can be read off from the graph via \emph{d-separation} and the more general \emph{trek-separation} criteria \cite{Sullivant2008}. In fact, the \emph{trek-separation} criterion helps describe the vanishing of all subdeterminants of the covariance matrix in any (not necessarily Gaussian) LSEM.
It turns out that covariance information is only sufficient to identify the graph up to {\em Markov equivalence}. That is, if two graphs $G$ and $G'$ give rise to the same contidional independence relations, then they produce the same sets of covariance matrices $\mathcal M^{(2)}(G) = \mathcal M^{(2)}(G')
$~\cite{Roozbehani}. Therefore, when the graph $G$ is a DAG and the variables are Gaussian, we can only recover $G$ up to Markov equivalence given observational data. Finding an implicit description of $\mathcal M^{(2)}(G)$ in the presence of hidden variables  is more challenging, although there has been promising recent progress. In particular, the authors of~\cite{Yao2019} prove that the minimal generators for the vanishing ideal $\mathcal{I}(G)$ containing all the constraints for a Gaussian Acyclic Directed Mixed Graph $G$ are in one-to-one correspondence with the pairs of non-adjacent vertices in the graph, and provide an algorithm to find all these generators. The paper \cite{Drton2018} points out that the generators of $\mathcal I(G)$ are given by nested determinants. 
\noindent

When the variables are non-Gaussian, the graph $G$ can be recovered uniquely from observational data. In particular, \cite{Shimizu2006} use Independent Component Analysis (ICA)~\cite{Comon1994} to estimate the graph structure via the Linear non-Gaussian Acyclic Model (LiNGAM). This framework and its derived versions DirectLiNGAM and PairwiseLiNGAM \cite{Hyvaerinen2013, Shimizu2011} make it possible to distinguish graphs within Markov equivalence classes. Furthermore, \cite{Wang2018} provide an algorithm that extends causal discovery of the causal structure in the high-dimensional setting based on higher-order moments, under a maximum in-degree condition.  



In this paper, we also work under the framework of a non-Gaussian LSEM. Building on the {trek rule}~\cite{Sullivant2008}, we define the {\em multi-trek rule} (Proposition \ref{prop_equation_entry_C}) which gives a polynomial parametrization of the higher-order moments/cumulants, and enables us to study LSEMs via their higher-order cumulant representation $\mathcal M^{(k)}(G)$ from the perspective of algebraic statistics~\cite{Sullivant2018} which has so far only been used for Gaussian and discrete graphical models. 
By analogy with the vanishing of subdeterminants in the covariance matrix, we give a necessary and sufficient combinatorial criterion, called \emph{multi-trek separation}, for the vanishing of subdeterminants of the tensor $\mathcal{C}^{(k)}$ of $k$-th order cumulants (Theorem~\ref{main_theorem}), which extends to the hidden variable case (Theorem~\ref{main_theorem_hidden_variables}). Our multi-trek separation criterion, for example, enables us to identify the presence of a hidden common cause of multiple observed variables.

\begin{figure}[H]
	\centering
	\begin{subfigure}[b]{0.17\linewidth}
		\includegraphics[scale=0.5]{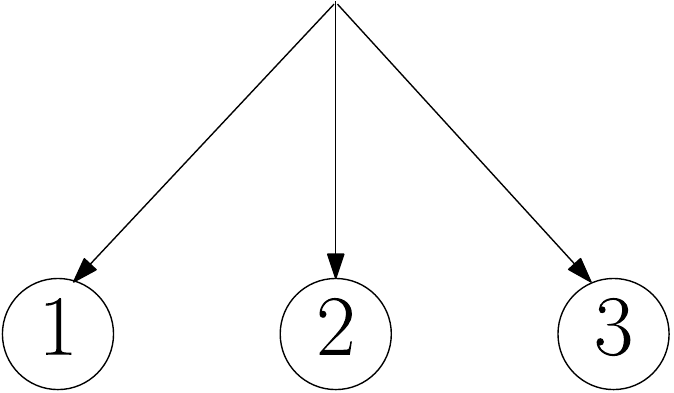}
	\caption{\label{fig:fig_intro1}}
	\end{subfigure}\hspace{0.1\textwidth}%
	\begin{subfigure}[b]{0.2\linewidth}
		\includegraphics[scale=0.46]{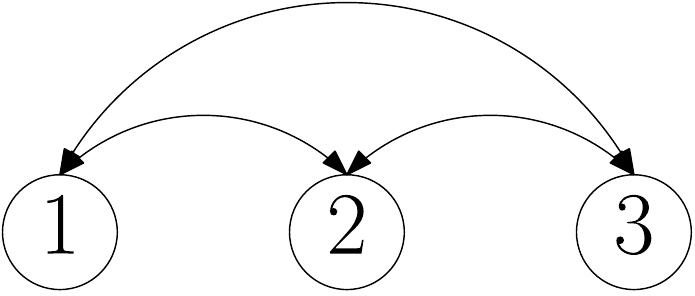}
	\caption{\label{fig:fig_intro2}}
	\end{subfigure}
	\caption{\textit{By Theorem \ref{main_theorem_hidden_variables}, (a)  $\mathcal{C}^{(3)}_{123} \neq 0$ (b) $\mathcal{C}^{(3)}_{123} = 0$}}
\end{figure}

The rest of the paper is organized as follows.  In Section \ref{section_2}, we define linear structural equation models (LSEMs) and their cumulant tensors. In Section \ref{section_3}, we introduce the notion of a multi-trek and we state our main theorem for DAG models that establishes a combinatorial criterion for the vanishing of subdeterminants of the $k^{th}$-order cumulant tensor. In Section \ref{section_4} we consider the case of hidden variables. Graphically we encode the presence of such variables via multi-directed edges, and we show that our results generalize to this case.  In Section \ref{section_5}, we conjecture that our multi-trek criterion is also equivalent to the vanishing of subdeterminants of higher-order moment (rather than cumulant) tensors. In section \ref{section_6}, we conclude and discuss directions for further research.

\section{Background}\label{section_2}
In this section we provide the necessary background on linear structural equation models, and their higher-order cumulants.
\subsection{Linear structural equation models}
\noindent
Let $G = (V, \mathcal D)$ be a directed acyclic graph (DAG) with finite vertex set $V = \{1,\ldots, p\}$ and edge set $\mathcal D \subseteq V \times V$.  Here acyclic means that there are no directed cycles, i.e., no sequences of the form $i_0 \to i_1\to\cdots\to i_s = i_0$, where $i_j\to i _{j+1}\in\mathcal D$. The
edge set is always assumed to be free of self-loops, so $i\to i \not\in \mathcal D$ for all $i \in V$. For each vertex
$i$, define its set of parents as pa$(i) = \{j \in V : j \rightarrow i \in \mathcal D\}$. The graph $G$ induces a statistical
model, called a {\em linear structural equation model}, for the joint distribution of a collection of random variables ($X_i
, i \in V $), indexed by
the graph’s vertices. The model hypothesizes that each variable is a linear function of the parent variables and a noise term $\varepsilon_i$:
\begin{equation}\label{linear_structural_equation_model_2}
    X_i = \lambda_{0i} + \sum_{j\in\text{pa}(i)} \lambda_{ji}X_j + \varepsilon_i, \ i \in V.
\end{equation}
The $\varepsilon_i$ variables for
$i \in V$, are independent and centered. The coefficients $\lambda_{0i}$ and $\lambda_{ji}$ are unknown real parameters that are assumed to be such that the
system \eqref{linear_structural_equation_model_2} admits a unique solution $X = (X_i
: i \in V )$. Typically termed a system
of structural equations, \eqref{linear_structural_equation_model_2} specifies cause-effect relations whose straightforward interpretability explains the wide-spread use of the models \cite{PeterSpirtes2000,Pearl2009}.

The random vector $X$ that solves the system \eqref{linear_structural_equation_model_2} may have an arbitrary mean depending on the choice of parameters $\lambda_{0i}$. Since the mean can easily be learned from data, and we are mainly concerned with learning the underlying graph structure, we disregard the offsets $\lambda_{0i}$, and the system~\eqref{linear_structural_equation_model_2} becomes
\begin{equation}\label{general equation_LSEM}
X = \Lambda^T X + \varepsilon.
\end{equation} 
Here $\Lambda = (\lambda_{ij}) \in \mathbb{R}^{\mathcal D}$, and $\mathbb{R}^{\mathcal{D}}$ is the set of $V \times V$ matrices $\Lambda$ with support $\mathcal{D}$, $$\mathbb{R}^\mathcal{D}  = \{\Lambda \in \mathbb{R}^{V \times V} : \lambda_{ij} = 0 \text{ if } i \rightarrow j \notin \mathcal{D}\}.$$ 
Since $G$ is acyclic, the matrix $I-\Lambda$ is always invertible and the solution of the system \eqref{general equation_LSEM} is:
\begin{equation}\label{equation_system}
    X = (I - \Lambda)^{-T}\varepsilon.
\end{equation} 

\subsection{Cumulants of linear structural equation models}

Recall that the $k$-th {\em cumulant tensor} of a random vector $Z=(Z_1,\ldots, Z_p)$ is the $p\times\cdots\times p$ ($k$ times) table with entry at position ($i_1,\ldots, i_k$) given by
$$\text{cum}(Z_{i_1}, \ldots, Z_{i_k})= \sum_{(A_1,\ldots, A_L)} (-1)^{L-1}(L-1)!\, \mathbb E\left[\prod_{j\in A_1} Z_j\right]
\mathbb E\left[\prod_{j\in A_2} Z_j\right]\cdots
\mathbb E\left[\prod_{j\in A_L} Z_j\right],$$
where the sum is taken over all partitions $(A_1,\ldots, A_L)$ of the set $\{i_1,\ldots, i_k\}$. If each of the variables $Z_i$ is centered, i.e., has mean 0, then we can restrict to summing over partitions for which each $A_i$ has size at least~2. 
For example, the first four cumulants are given as follows: 
    $$ \text{cum}(Z_{i}) =  \mathbb{E}[Z_{i}] = 0, \quad\quad \text{cum}(Z_{i_1}, Z_{i_2}) =  \mathbb{E}[Z_{i_1}Z_{i_2}], \quad\quad \text{cum}(Z_{i_1}, Z_{i_2}, Z_{i_3}) =  \mathbb{E}[Z_{i_1}Z_{i_2}Z_{i_3}], \quad\text{and}$$
    $$\text{cum}(Z_{i_1}, Z_{i_2}, Z_{i_3}, Z_{i_4}) = \mathbb{E}[Z_{i_1}Z_{i_2}Z_{i_3}Z_{i_4}] - \mathbb{E}[Z_{i_1}Z_{i_2}]\mathbb{E}[Z_{i_3}Z_{i_4}] - \mathbb{E}[Z_{i_1}Z_{i_3}]\mathbb{E}[Z_{i_2}Z_{i_4}] - \mathbb{E}[Z_{i_1}Z_{i_4}]\mathbb{E}[Z_{i_2}Z_{i_3}].$$

Now, let $k\geq 2$, and let $\mathcal E^{(k)}$ and $\mathcal C^{(k)}$ be the $k$-th order cumulant tensors of the random vectors $\varepsilon$ and $X$, respectively. The linear structural equation model, and, particularly, the expression~\eqref{equation_system} yield the following relationship between $\mathcal C^{(k)}$ and $\mathcal E^{(k)}$.

\begin{lemma}[{\cite[Chapter~5]{Comon2010}}]\label{factorization_cumulant_tensor}
The tensor $\mathcal C^{(k)}$ of $k$-th order cumulants of $X$ equals
\begin{equation}\label{tucker_decomposition_C}
   \mathcal C^{(k)} = \mathcal E^{(k)}\bullet (I-\Lambda)^{-k}, 
\end{equation}

\noindent
where $ \mathcal E^{(k)}\bullet (I-\Lambda)^{-k}$ denotes the {\em Tucker product} of the order-$k$ tensor $\mathcal E^{(k)}$ and the matrix $(I-\Lambda)^{-1}$ along each of its $k$ dimensions. In other words,
$$\left(\mathcal E^{(k)}\bullet (I-\Lambda)^{-k}\right)_{i_1,\ldots, i_k} = \sum_{j_1,\ldots, j_k} \mathcal E^{(k)}_{j_1,\ldots, j_k} ((I-\Lambda)^{-1})_{j_1,i_1} \cdots ((I-\Lambda)^{-1})_{j_k,i_k}.$$
\end{lemma}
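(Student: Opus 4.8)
The plan is to reduce the entire statement to the \emph{multilinearity} of joint cumulants and then unwind the resulting index bookkeeping so that it matches the Tucker product on the right-hand side. The starting point is the closed form from~\eqref{equation_system}, which expresses each coordinate of $X$ as a linear combination of the noise terms,
$$X_{i} = \sum_{j} \big((I-\Lambda)^{-1}\big)_{j,i}\,\varepsilon_{j},$$
since $X = (I-\Lambda)^{-T}\varepsilon$. Substituting this into $\mathcal{C}^{(k)}_{i_1,\ldots, i_k} = \mathrm{cum}(X_{i_1},\ldots, X_{i_k})$ and pulling the coefficients out of each of the $k$ slots should produce exactly the claimed sum over $j_1,\ldots, j_k$, with $\mathrm{cum}(\varepsilon_{j_1},\ldots,\varepsilon_{j_k}) = \mathcal{E}^{(k)}_{j_1,\ldots, j_k}$.

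First I would state and justify multilinearity: for fixed random variables and scalar coefficients, $\mathrm{cum}(\,\cdot\,,\ldots,\,\cdot\,)$ is linear in each argument separately, so that
$$\mathrm{cum}\Big(\sum_{j_1} a_{1 j_1}\varepsilon_{j_1},\ldots,\sum_{j_k} a_{k j_k}\varepsilon_{j_k}\Big) = \sum_{j_1,\ldots, j_k} a_{1 j_1}\cdots a_{k j_k}\,\mathrm{cum}(\varepsilon_{j_1},\ldots,\varepsilon_{j_k}).$$
The cleanest justification reads this off the cumulant generating function $K_Z(t) = \log \mathbb{E}[\exp(\langle t, Z\rangle)]$, whose mixed partial derivative $\partial_{t_{i_1}}\cdots \partial_{t_{i_k}} K_Z$ at the origin is $\mathrm{cum}(Z_{i_1},\ldots, Z_{i_k})$: writing $Z = A\varepsilon$ gives $K_Z(t) = K_\varepsilon(A^{T}t)$, and the chain rule converts the $t$-derivatives into the desired coefficient-weighted sum of $\varepsilon$-cumulants. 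Since the paper defines cumulants through the moment--partition formula, I would alternatively verify multilinearity directly from that definition, using linearity of expectation inside each block $A_\ell$ of a partition and collecting terms; this route is elementary and avoids any moment-generating-function existence hypothesis, though it is more tedious, so I would relegate it to a remark or simply cite it as the standard property recorded in the stated reference.

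With multilinearity in hand, the remaining work is purely notational: identify the coefficient in the $m$-th slot with $\big((I-\Lambda)^{-1}\big)_{j_m, i_m}$, substitute, and observe that the resulting expression
$$\sum_{j_1,\ldots, j_k} \mathcal{E}^{(k)}_{j_1,\ldots, j_k}\,\big((I-\Lambda)^{-1}\big)_{j_1,i_1}\cdots \big((I-\Lambda)^{-1}\big)_{j_k,i_k}$$
is by definition the $(i_1,\ldots, i_k)$ entry of $\mathcal{E}^{(k)}\bullet (I-\Lambda)^{-k}$. I expect the only genuine subtlety to be keeping the transpose and the index order consistent: the factor contracting the $m$-th tensor mode must be $\big((I-\Lambda)^{-1}\big)_{j_m,i_m}$ and not its transpose, which is forced by the appearance of $(I-\Lambda)^{-T}$ rather than $(I-\Lambda)^{-1}$ in~\eqref{equation_system}. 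I would emphasize that neither independence nor centering of the $\varepsilon_i$ is used anywhere in this argument, so the identity holds verbatim for correlated-noise models; this is precisely what will make it reusable in the hidden-variable setting of Section~\ref{section_4}.
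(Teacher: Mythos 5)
Your proof is correct. Note that the paper does not prove this lemma at all---it imports it by citation from Comon and Jutten, Chapter~5---and the multilinearity argument you give is precisely the standard derivation behind that reference: from $X = (I-\Lambda)^{-T}\varepsilon$ one has $X_{i} = \sum_{j}\big((I-\Lambda)^{-1}\big)_{j,i}\,\varepsilon_{j}$, and multilinearity of joint cumulants in each slot yields the Tucker-product formula with exactly the index placement you describe (the transpose in \eqref{equation_system} is indeed what forces the contraction $\big((I-\Lambda)^{-1}\big)_{j_m,i_m}$ in each mode). Your instinct to prefer the moment--partition definition, or equivalently the characteristic function $\log \mathbb{E}\big[e^{i\langle t, Z\rangle}\big]$, over the moment generating function is the right one: the lemma is asserted for arbitrary noise laws with finite $k$-th moments, for which the MGF need not exist in any neighborhood of the origin, whereas multilinearity follows from the partition formula by the one-block-per-argument observation you sketch. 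Your closing remark is substantive rather than cosmetic: the argument nowhere uses independence or diagonality of $\mathcal E^{(k)}$---those enter only through Lemma~\ref{lem:diagonal_cumulants}---and the paper tacitly relies on exactly this when it reapplies the factorization to correlated noise in the hidden-variable setting of Section~\ref{section_4}.
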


\noindent
When the entries of the noise vector $\varepsilon$ are mutually independent, its cumulants $\mathcal E^{(k)}$ are  {\em diagonal} tensors. 

\noindent
\begin{lemma}[{\cite[Chapter~5]{Comon2010}}]\label{lem:diagonal_cumulants}
If the variables $Z_1,\ldots, Z_p$ are independent, then the $k$-th order cumulant tensor of $Z=(Z_1,\ldots, Z_p)$ is diagonal, i.e., the entry at position $(i_1,\ldots, i_k)$ is 0 unless $i_1, \ldots, i_k$ are all equal.
\end{lemma}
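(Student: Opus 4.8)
The plan is to derive the diagonality from the multilinearity of cumulants together with the single structural fact that the joint cumulant of a tuple of random variables vanishes whenever that tuple can be split into two non-empty, mutually independent blocks. Granting this fact, the lemma follows immediately. Suppose $i_1,\ldots,i_k$ are not all equal, fix a value $v$ occurring among them, and set $S=\{a : i_a = v\}$ and $T=\{a : i_a \neq v\}$, so that both $S$ and $T$ are non-empty. The variables at positions in $S$ are all copies of $Z_v$, while those at positions in $T$ involve only variables $Z_w$ with $w \neq v$. Since $Z_1,\ldots,Z_p$ are independent, the block $\{Z_{i_a} : a\in S\}$ is independent of the block $\{Z_{i_a} : a\in T\}$, and hence $\mathrm{cum}(Z_{i_1},\ldots,Z_{i_k})=0$.

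So the real content is the splitting fact, and the cleanest way I would establish it is through the cumulant generating function. Let $K(t_1,\ldots,t_p)=\log \mathbb{E}\big[\exp(\sum_j t_j Z_j)\big]$, and recall the standard identity (equivalent to the moment-partition formula stated above) that $\mathrm{cum}(Z_{i_1},\ldots,Z_{i_k})$ equals the mixed partial derivative $\partial_{t_{i_1}}\cdots \partial_{t_{i_k}} K$ evaluated at $t=0$. Because the $Z_j$ are independent, the moment generating function factors, so $K(t_1,\ldots,t_p)=\sum_{j=1}^p K_j(t_j)$ is a sum of univariate functions. Any mixed partial derivative that differentiates at least once with respect to two distinct coordinates annihilates every summand and is therefore identically zero; this is exactly the case of an off-diagonal position. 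The argument also handles repeated indices transparently, since a diagonal position $(v,\ldots,v)$ corresponds to a pure higher derivative $\partial_{t_v}^k K = K_v^{(k)}(t_v)$, which need not vanish.

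I would present the CGF argument as the main route because it avoids combinatorics, but I would note the alternative of working directly from the partition formula in the text: there one substitutes the factorization $\mathbb{E}[\prod_{j\in A} Z_{i_j}] = \prod_{\text{blocks}} \mathbb{E}[\cdots]$ coming from independence and shows the signed sum over partitions collapses. The main obstacle in that direct approach is the bookkeeping of the alternating signs and the $(L-1)!$ weights: one must verify that, after grouping the partitions according to how they interact with the decomposition $S\sqcup T$, every surviving term cancels. This is precisely a Möbius-inversion statement on the lattice of set partitions, and it is the only step demanding genuine care. The CGF route sidesteps it entirely, which is why I would prefer it, relegating the combinatorial identity to a remark or to a citation of the standard reference.
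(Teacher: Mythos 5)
Your proof is correct. Note, though, that the paper itself gives no argument for this lemma: it is stated with a citation to Comon--Jutten (Chapter~5), where the standard proof is exactly your main route --- independence makes the joint second characteristic function additive across coordinates, so every mixed partial involving two distinct indices vanishes, and your reduction of the general off-diagonal case to a two-block split (positions with $i_a = v$ versus the rest) is a clean way to package it. One point of rigor to fix: you work with $K(t) = \log \mathbb{E}\bigl[\exp\bigl(\sum_j t_j Z_j\bigr)\bigr]$, but the moment generating function need not exist in any neighborhood of the origin even when all cumulants of order $k$ do (e.g., for heavy-tailed $Z_j$ with finite $k$-th moments). The standard remedy is to run the identical argument on the second characteristic function $\psi(t) = \log \mathbb{E}\bigl[\exp\bigl(i\sum_j t_j Z_j\bigr)\bigr]$, which exists near $0$ and is $k$-times differentiable at $0$ whenever the $k$-th moments are finite; independence gives $\psi(t) = \sum_j \psi_j(t_j)$ and the mixed partials vanish just as in your argument, with cumulants recovered up to the factor $i^k$. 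Alternatively, one can treat your identity as a formal statement about the polynomial relation between cumulants and moments, which is the content of the partition-lattice cancellation you correctly flag as the delicate step of the direct combinatorial route; your instinct to avoid that M\"obius-inversion bookkeeping via the generating function is sound and matches the cited source.
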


\begin{remark} (a)  We originally considered the moments of $X$ and $\varepsilon$, instead of their cumulants. Lemma~\ref{factorization_cumulant_tensor} also applies to the factorization of the moments, however, the $k$-th moment tensors of $\varepsilon$ for $k\geq 4$ are no longer diagonal, i.e., Lemma~\ref{lem:diagonal_cumulants} does not apply. We give further details of this study in Section \ref{section_6}.
\vspace{-0.15cm}

(b) Lemma~\ref{lem:diagonal_cumulants} is the reason cumulants are widely used for Independent Component Analysis (ICA) \cite{Comon2010}. Indeed, finding the CP-decomposition \cite{Comon2010,Lathauwer2006} of the cumulant tensor $\mathcal C^{(k)}$ can recover the matrix $(I-\Lambda)^{-1}$. For an extended study of cumulants in ICA, we refer the reader to~\cite{Comon2010}.
\end{remark}
Lemmas~\ref{factorization_cumulant_tensor} and \ref{lem:diagonal_cumulants} provide a means to {\em parametrize} the set of all cumulant tensors of the distributions in a given graphical model.

\begin{definition}\label{parametrization}
Let $G = (V, \mathcal D)$ be a DAG, and let $k\geq 2$ be an integer. The set 
$$\mathcal M^{(2)}(G) = \{(I-\Lambda)^{-T} \mathcal E^{(2)} (I-\Lambda)^{-1} : \Lambda\in\mathbb R^{\mathcal D}, \, \mathcal E^{(2)}\succeq 0 \,\text{ diagonal}\}$$
consists of all covariance matrices of distributions in the graphical model given by $G$. For $k\geq 3$, define
$$\mathcal M^{(k)}(G) = \{\mathcal E^{(k)}\bullet (I-\Lambda)^{-k} : \Lambda\in\mathbb R^{\mathcal D},\,\mathcal E^{(k)} \text{ diagonal}\}$$
to be the set of cumulants of order $k$ consistent with the graph $G$.
And finally, let
$$\mathcal M^{(\leq k)}(G) = \mathcal M^{(2)}(G)\times\cdots\times\mathcal M^{(k)}(G)$$
be the set all cumulants up to order $k$ of a distribution in the graphical model given by $G$.
\end{definition}

When the error terms $\varepsilon_i$ are Gaussian, the random vector $X$ also follows a Gaussian distribution and all of its cumulants of order $k\geq 3$ equal 0. Therefore, the model equals $\mathcal M^{(2)}(G)$. In this case, different DAGs $G_1$ and $G_2$ that lie in the same {\em Markov equivalence class} can give rise to the same models $\mathcal M^{(2)}(G_1) = \mathcal M^{(2)}(G_2)$. An implicit description of $\mathcal M^{(2)}(G)$ is well-known when $G$ is a DAG and is given by  conditional independences ~\cite{Garcia2005,Kiiveri1984}. These correspond to \textit{d-separations} in the graph $G$ \cite{Pearl1986}.

When the noise terms $\varepsilon_i$ are not Gaussian, the higher-order moments will not necessarily vanish, and we can obtain more information about the distribution from them. It turns out that $\mathcal M^{(\leq k)}(G)$ uniquely identifies the DAG $G$~\cite{Shimizu2006, Shimizu2011, Wang2018}. An implicit description of $\mathcal M^{(\leq k)}(G)$ is not known completely, although~\cite{ Wang2018} discover enough of the defining equations to identify the DAG $G$.

\noindent



\section{Multi-trek separation for directed acyclic graphs}\label{section_3}
In this section we present our main result, a particular type of constraint on the model $\mathcal M^{(k)}(G)$ that corresponds to a combinatorial criterion in the graph $G$. Given data, one can check whether the constraint holds for the sample cumulant tensors in order to obtain information about the structure of the unknown DAG $G$. Our result generalizes to the case of hidden variables as shown in Section \ref{section_4}.

\subsection{Multi-treks and the multi-trek rule}
We begin by generalizing the notion of a {\em trek}~\cite{Sullivant2008}.
\begin{definition}
\noindent
A $k$-trek in a DAG $G$ between $k$ nodes $v_1,\ldots, v_k$ is an ordered collection of $k$ directed paths $(P_{1}, ..., P_{k})$, where $P_{i}$ has sink $v_i$, and $P_{1},..., P_{k}$ have the same source vertex, called the {\em top} of the $k$-trek and denoted by $top(P_{1},..., P_{k})$.
\end{definition}
\noindent
Note that a 2-trek is exactly the same as the usual notion of a trek~\cite{Sullivant2008}.

\begin{figure}[H]
\centering
     \includegraphics[width=0.25\textwidth]{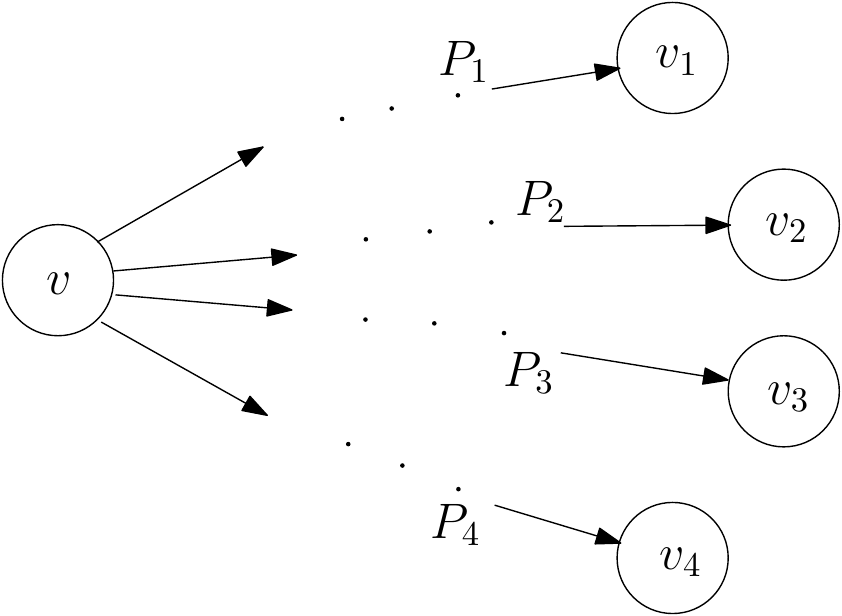}\small(a)
\hspace{1.8cm}	\includegraphics[width=0.32\textwidth]{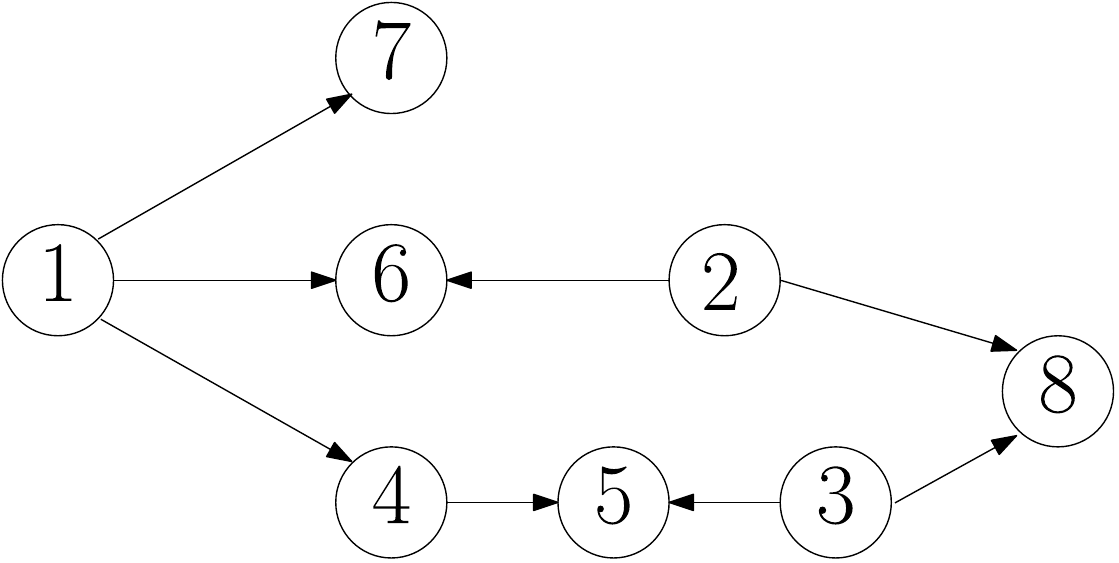}\small(b)
	\caption{(a). An example of a 4-trek; (b) Example DAG.}\label{fig:2}
\end{figure}

\begin{example}
In the directed acyclic graph from Figure~\ref{fig:2}b,
$$(2\to 6,\, 2\to 8) \text{ is a 2-trek between 6 and 8 with top 2};$$
$$(1\to7,\, 1\to6,\, 1\to4\to 5) \text{ is a 3-trek between 7, 6, and 5 with top 1};$$
$$(1\to 7,\, 1\to 6,\, 1,\, 1\to 6) \text{ is a 4-trek between 7, 6, 1, and 6 with top 1}.$$

\end{example}
\noindent
Note that the paths $P_i$ can consist of a single vertex.

\noindent
We now generalize the {\em trek rule}, which originates in the work of~\cite{Wright1921,Wright1934} and relies on the observation that $(I-\Lambda)^{-1} = I + \Lambda + \Lambda^2 + \cdots$. Since the graph $G$ is acyclic, this formal sum is finite, and each of the entries of $(I-\Lambda)^{-1}$ can be expressed as
\begin{equation}\label{entries_I-Lambda}
(I - \Lambda)^{-1}_{ji} = \sum_{P \in \mathcal{P}(j,i)} \lambda^P 
\end{equation}
where $\mathcal{P}(j,i)$ is the set of all directed paths from $j$ to $i$, and $\lambda^P$ is the product of the coefficients $\lambda_{uv}$ along the edges $u\to v$ on a directed path $P$ (e.g., if $P$ is the path $1 \rightarrow 2 \rightarrow 4$, then $\lambda^P$ is $\lambda_{12}\lambda_{24}$). Note that, since we only consider acyclic graphs, for any $i\in V$, $\mathcal P(i,i)$ consists of the trivial path from $i$ to $i$ consisting of just the vertex $i$, and the monomial for this path is equal to $1$.

\begin{proposition}[The multi-trek rule]\label{prop_equation_entry_C}
When the entries of the noise vector $\varepsilon$ are independent, the entries of the $k$-th order cumulant tensor $\mathcal C^{(k)}$ of $X$ can be expressed as a sum over {\em $k$-trek monomials},
\begin{equation}\label{eq:equation_entry_C}
\mathcal{C}^{(k)}_{i_1,...,i_k} = \sum_{(P_1,...,P_k) \in \mathcal{T}(i_1,...,i_k)} {\mathcal E^{(k)}_{top(P_{1},..., P_{k}),\ldots, top(P_{1},..., P_{k})} \lambda^{P_{1}}} ... \ \lambda^{P_{k}},
\end{equation}
where $\mathcal{T}{(i_1,...,i_k)}$ is the set of all $k$-treks between $i_1,...,i_k$.
\end{proposition}

\begin{proof}
\noindent
By Lemma~\ref{factorization_cumulant_tensor}, we can express the $k$-th cumulant tensor of $X$ via the Tucker decomposition $\mathcal{C}^{(k)} = 
\mathcal{E}^{(k)}\bullet(I - \Lambda)^{-k}$. Furthermore, by Lemma~\ref{lem:diagonal_cumulants}, the $k$-th cumulant tensor $\mathcal E^{(k)}$ of $\varepsilon$ is diagonal. Therefore,
\begin{align*}
\mathcal C^{(k)}_{i_1,\ldots, i_k} &= \sum_{j_1,\ldots, j_k}\mathcal E^{(k)}_{j_1,\ldots, j_k} ((I-\Lambda)^{(-1)})_{j_1,i_1}\cdots ((I-\Lambda)^{(-1)})_{j_k,i_k}\\
&= \sum_{j}\mathcal E^{(k)}_{j,\ldots, j} ((I-\Lambda)^{(-1)})_{j,i_1}\cdots ((I-\Lambda)^{(-1)})_{j,i_k}.
\end{align*}
Using equation (\ref{entries_I-Lambda}), we obtain
$$\mathcal C^{(k)}_{i_1,\ldots, i_k} = \sum_j\mathcal E^{(k)}_{j,\ldots, j}\left(\sum_{P_1\in\mathcal P(j,i_1)}\lambda^{P_1}\right)\cdots\left(\sum_{P_k\in\mathcal P(j,i_k)}\lambda^{P_k}\right),$$
which yields the result.
\end{proof}


\begin{example}
Consider the DAG from Figure~\ref{fig:2}b. According to the multi-trek rule, we have, for instance, the following relationships between the cumulants $\mathcal C^{(k)}$ and $\mathcal E^{(k)}$ of $X$ and $\varepsilon$.
$$\mathcal C^{(2)}_{4,5} = \mathcal E^{(2)}_{4,4}\lambda_{45}+\mathcal E^{(2)}_{1,1}\lambda_{14}^2\lambda_{45}, \quad\quad\quad\mathcal C^{(3)}_{5,6,7} = \mathcal E^{(3)}_{1,1,1}\lambda_{14}\lambda_{45}\lambda_{16}\lambda_{17}, \quad\quad\quad \mathcal C^{(3)}_{5,6,8} = 0.$$
These follow because there are two 2-treks between $4$ and 5: $(4, 4\to5)$ and $(1\to4, 1\to4\to5)$. There is one 3-trek between 5, 6, and 7: $(1\to4\to5, 1\to6, 1\to7)$. There are no $3$-treks between 5, 6, and 8.

\end{example}

\subsection{Multi-trek systems and determinants of higher-order cumulants}
We now generalize the multi-trek rule, giving an expression of the {\em determinants} of subtensors of $\mathcal C^{(k)}$ in terms of {\em multi-trek systems}. We first recall the notion of a combinatorial hyperdeterminant~\cite{Cayley} of a tensor, which we simply refer to as determinant throughout the rest of the paper
\begin{definition}\label{definition_determinant_tensor}
\noindent
Let $T$ be an order-$k$ $n \times ... \times n$ tensor. Then, its determinant is
\begin{equation}
\text{det}(T) = \sum_{\sigma_2,..., \sigma_{k}\in\mathfrak{S}(n)} \text{sign}(\sigma_2) \cdots \text{sign}(\sigma_{k}) \prod_{i=1}^{n} T_{i,\sigma_2(i),...,\sigma_{k}(i)},
\end{equation}
where $\mathfrak{S}(n)$ is the set of permutations of the set $\{1,\ldots, n\}$.
\end{definition}
\noindent Next, we introduce the notion of a multi-trek system.

\begin{definition}
Given a collection of $k$ sets of nodes $S_1,...,S_k\subseteq V$ such that $\#S_1 =...=\#S_k = n$, a {\em $k$-trek system} ${T}$ is a collection of $n$ $k$-treks between $S_1,...,S_k$ such that the ends of $T$ on the $i$-th side equal $S_i$. We define the {\em top} of this $k$-trek system, $top({T})$, to be the union of the tops of the $k$-treks. We allow repeated elements in $top({T})$. 
A $k$-trek system $ T$ has {\em sided intersection} if there exist two $k$-treks $(P_1,\ldots, P_k)$ and $(Q_1,\ldots, Q_k)$ in $ T$ and a number $1\leq i\leq k$ so that the directed paths $P_i$ and $Q_i$ have a common vertex. We denote by $\widetilde{\mathcal T}(S_1,\ldots, S_k)$ the set of $k$-trek systems between $S_1,\ldots, S_k$ that have {\em no} sided intersection.
\end{definition}

\begin{example} In Figure 4 below, the two 3-treks between $S_1, S_2, S_3$ (respectively in full and dashed line) form a 3-trek-system between $S_1,S_2,S_3$. They have a sided intersection along the paths leading to the set~$S_1$.
\begin{figure}[H]
\centering
\includegraphics[width=0.3\linewidth]{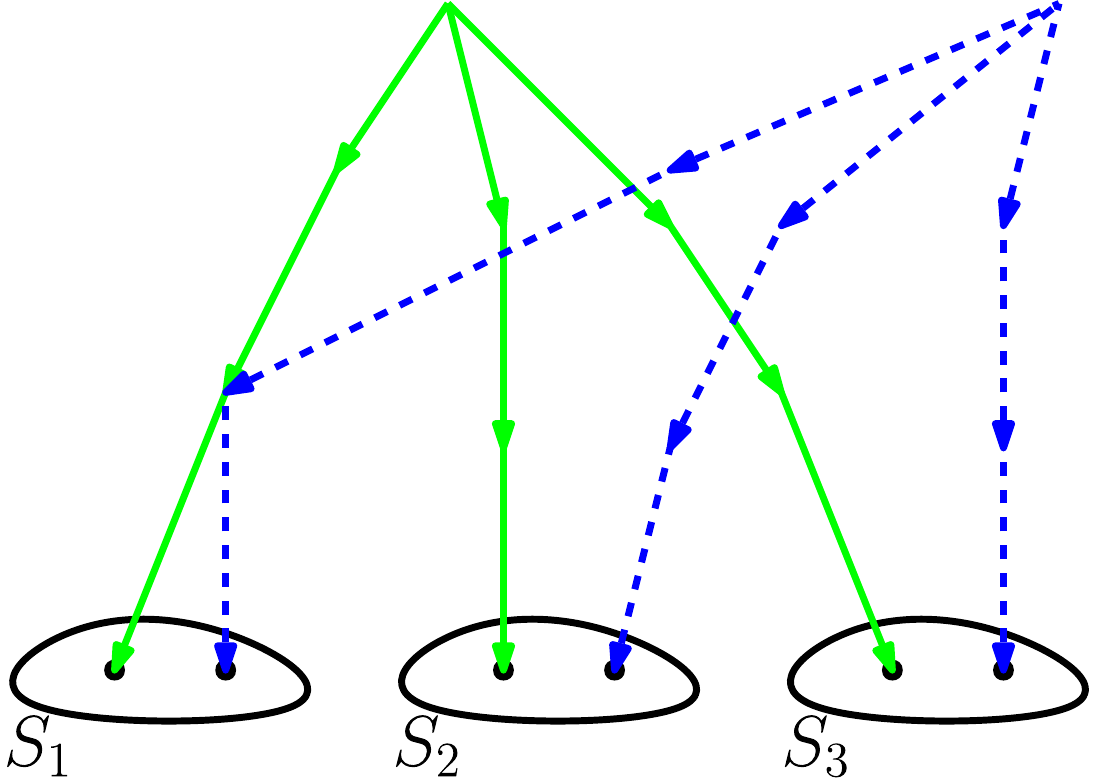}\label{sided_intersection}
\caption{A tri-trek system with sided intersection.}
\end{figure}
\end{example}

Given a collection of $k$ sets of nodes $S_1,...,S_k\subseteq V$ such that $\#S_1 =...=\#S_k = n$, and an ordering of the nodes in each set $S_i$, a $k$-trek system $T$ gives rise to a permutation of the nodes in each of $S_2,...,S_k$ (if we keep the ordering of $S_1$ fixed). More explicitly, the $j$-th $k$-trek in $T$ connects the $j$-th vertex of $S_1$  with the $\sigma_i(j)$-th vertex of $S_i$ for all $i=2,\ldots, k$, and $\sigma_2, \ldots, \sigma_k$ are the  permutations induced by $T$.  The {\em sign} of $T$ is the product of the signs of those $(k-1)$ permutations. The signs of the permutations are defined with respect to the ordering chosen initially.

\begin{example} In Figure~\ref{fig:sign_trek_system} below, $S_1=\{v_1,v_2\}$, $S_2=\{v_3,v_4\}$, $S_3=\{v_5,v_6\}$. Assume that the initial ordering is $(v_1,v_2), (v_3, v_4), (v_5, v_6)$. The trek system in Figure~\ref{fig_sign_T_a} has two 3-treks, one between $v_1,v_4, v_5$ and one between $v_2, v_3, v_6$. Therefore,  sign$(T) = (-1) \times 1 = -1$. 
The trek system in Figure~\ref{fig_sign_T_b} has two 3-treks, one between $v_1,v_4, v_6$ and one between $v_2, v_3, v_5$. Therefore,  sign$(T) =  (-1) \times (-1) =  +1$.
\begin{figure}[H]
	\centering
	\begin{subfigure}[b]{0.4\linewidth}
		\includegraphics[scale=0.4]{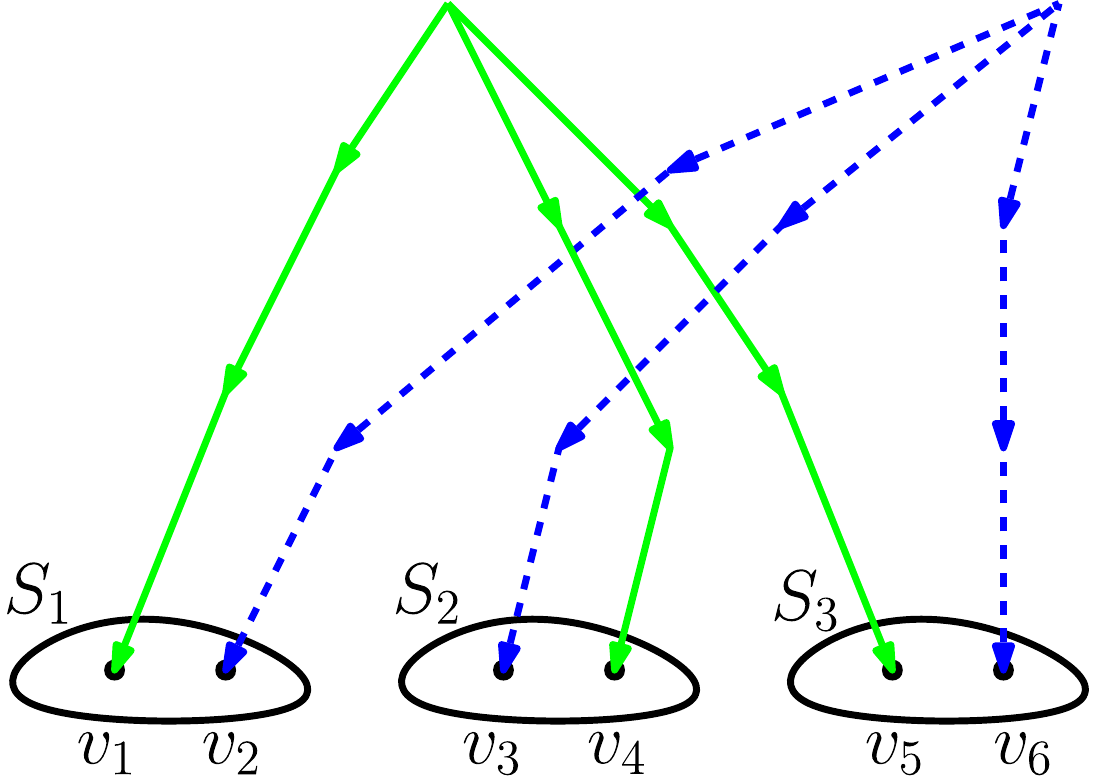}
	\caption{\label{fig_sign_T_a}}
	\end{subfigure}\hspace{0.03\textwidth}%
	\begin{subfigure}[b]{0.4\linewidth}
		\includegraphics[scale=0.4]{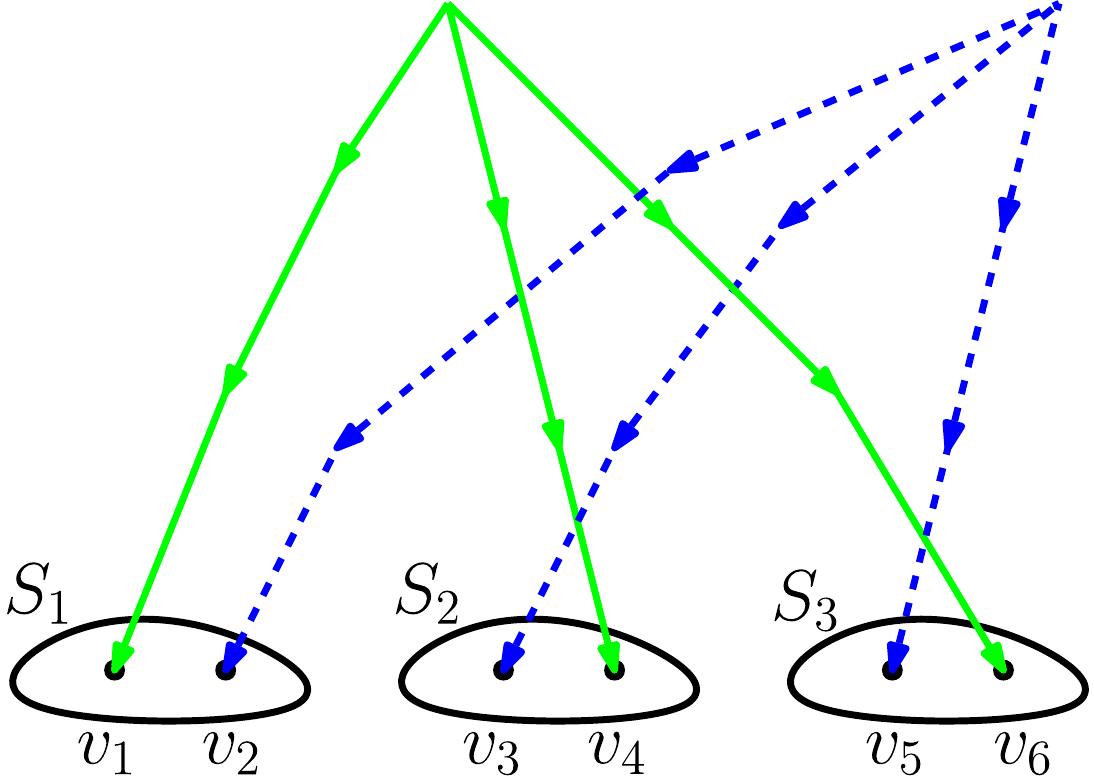}
	\caption{\label{fig_sign_T_b}}
	\end{subfigure}
	\caption{\label{fig:sign_trek_system}}
\end{figure}
\end{example}

The determinant of the subtensor of $k^{th}$ order cumulants indexed by the sets $S_1,...,S_k$ can be expressed in terms of the trek systems involving $S_1,...,S_k$.
\begin{proposition}\label{det_T_trek_system_m}
Let $S_1,\ldots, S_k\subseteq V$ be $k$ sets of nodes such that $\#S_1 =...=\#S_k = n$. Then,
\begin{align*}
    \text{det } \mathcal C^{(k)}_{S_1,\ldots, S_k} = \sum_{T\in\mathcal T(S_1,\ldots, S_k)} \text{sign(T) } m_{T},  \tag{\textasteriskcentered} \label{eq_ast}
\end{align*}
where $\mathcal T(S_1,\ldots, S_k)$ is the set of $k$-trek systems between $S_1,\ldots, S_k$, and $m_T$ is the trek-system monomial of the trek system $T=\{(P^{(1)}_1,\ldots, P^{(1)}_k), \ldots, (P^{(n)}_1,\ldots, P^{(n)}_k)\}$, defined as 
$$m_T = \prod_{i=1}^n \mathcal E^{(k)}_{top((P^{(i)}_1,\ldots, P^{(i)}_k)),\ldots, top((P^{(i)}_1,\ldots, P^{(i)}_k))}\prod_{j=1}^k\lambda^{P^{(i)}_j}.$$
In fact, the sum in (\ref{eq_ast}) can be taken over treks $T \in \widetilde{\mathcal{T}}(S_1,\ldots, S_k)$ without sided intersections, i.e.,
\begin{equation}
\text{det } \mathcal C^{(k)}_{S_1,\ldots, S_k} = \sum_{T\in \widetilde{\mathcal{T}}(S_1,\ldots, S_k)} \text{sign(T) } m_{T}.
\label{sum_over_trek_system_without_sided_intersection}
\end{equation}
\end{proposition}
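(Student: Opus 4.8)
The plan is to start from the combinatorial hyperdeterminant of Definition~\ref{definition_determinant_tensor} and substitute the multi-trek rule (Proposition~\ref{prop_equation_entry_C}) into each of its $n$ factors. Writing $S_j=\{s^j_1,\ldots,s^j_n\}$ in the chosen order,
$$\det \mathcal C^{(k)}_{S_1,\ldots,S_k} = \sum_{\sigma_2,\ldots,\sigma_k}\Big(\prod_{l=2}^k \text{sign}(\sigma_l)\Big)\prod_{i=1}^n \mathcal C^{(k)}_{s^1_i,\, s^2_{\sigma_2(i)},\,\ldots,\,s^k_{\sigma_k(i)}},$$
and I would expand each entry as its sum over $k$-treks. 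Distributing the product over the $n$ factors, every resulting term selects one $k$-trek for each $i$, and these $n$ treks together form a $k$-trek system $T\in\mathcal T(S_1,\ldots,S_k)$ whose induced permutations are exactly $\sigma_2,\ldots,\sigma_k$; its signed weight is therefore $\text{sign}(T)\,m_T$, and conversely each $T$ arises exactly once. This is pure bookkeeping and should yield the first identity~(\ref{eq_ast}) with no real difficulty.

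For the refined identity~(\ref{sum_over_trek_system_without_sided_intersection}) the plan is a sign-reversing, monomial-preserving involution $\phi$ on the trek systems that \emph{do} have a sided intersection, so that such systems cancel in pairs. After fixing a total order on $V$ and on the trek indices $\{1,\ldots,n\}$, given a system $T$ with a sided intersection I would locate the first colliding side $i$, then the first shared vertex $w$ of the two lowest-indexed treks whose $i$-th paths meet, and define $\phi(T)$ by exchanging the portions of those two $i$-th paths lying after $w$. Only the tails past $w$ move, so the sources (tops) and all other paths are untouched: $\phi(T)$ is again a valid $k$-trek system, the multiset of edges and of tops is preserved (hence $m_{\phi(T)}=m_T$), and since everything up to $w$ is unchanged, $w$ is still the first collision, making $\phi$ self-inverse. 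Exchanging the two $i$-th ends composes $\sigma_i$ with a transposition, which should flip the sign, so the paired terms cancel and only $\widetilde{\mathcal T}$ survives.

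The hard part, and the step I expect to require the most care, is verifying that $\phi$ is genuinely sign-reversing, where the \emph{distinguished first coordinate} is delicate. A swap on a side $i\in\{2,\ldots,k\}$ alters only $\sigma_i$ and so flips exactly one of the $k-1$ signs in $\text{sign}(T)=\prod_{l\geq 2}\text{sign}(\sigma_l)$, as wanted. But the hyperdeterminant does not antisymmetrize the first index, so a collision on side $1$ behaves differently: exchanging the two side-$1$ tails re-assigns which trek carries each of $s^2_{\cdot},\ldots,s^k_{\cdot}$, composing \emph{every} $\sigma_2,\ldots,\sigma_k$ with the same transposition and changing the sign by $(-1)^{k-1}$ — which equals $-1$ only when $k$ is even. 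I would therefore treat first-side collisions separately, most cleanly via the Cauchy--Binet-type expansion $\mathcal C^{(k)}_{S_1,\ldots,S_k}=\sum_j \mathcal E^{(k)}_{j\ldots j}\,(M_{j,S_1}\otimes\cdots\otimes M_{j,S_k})$ with $M=(I-\Lambda)^{-1}$, which exhibits sides $2,\ldots,k$ as genuine determinants $\det(M_{J,S_l})$ (forcing vertex-disjointness, i.e.\ no sided intersection on those sides) while the first side yields a determinant only in the even case and otherwise a permanent, which does not enforce disjointness. Determining exactly which sided intersections must be excluded — in particular whether collisions on the first side should be counted at all — is the genuine crux of the argument.
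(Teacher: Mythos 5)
Your first paragraph is exactly the paper's own proof of the identity~(\ref{eq_ast}): Leibniz expansion of the hyperdeterminant of Definition~\ref{definition_determinant_tensor}, substitution of the multi-trek rule, and regrouping of the resulting terms by trek system. That part is correct and needs no further comment.

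For the refined identity~(\ref{sum_over_trek_system_without_sided_intersection}) the paper does not use your tail-swapping involution: it proves a tensor Cauchy--Binet lemma (Lemma~\ref{Cauchy-Binet theorem}), applies it along all $k$ dimensions to obtain Lemma~\ref{det_cumulant}, uses diagonality of $\mathcal E^{(k)}$ to force $R_1=\cdots=R_k=R$, and then kills sided intersections one side at a time with Gessel--Viennot--Lindstr\"om (Lemma~\ref{GVL_lemma}). But the obstruction you isolate in your third paragraph is genuine, and the paper's proof does not resolve it --- it silently commits exactly the error you anticipated. Lemma~\ref{Cauchy-Binet theorem} is proved for multiplication along the \emph{second} dimension, and the remark that the same argument works ``along any other dimension'' fails for the first dimension when $k$ is odd, precisely because (as you note) the hyperdeterminant does not antisymmetrize the first index: permuting first-dimension slices by $\tau$ scales $\det$ by $\mathrm{sign}(\tau)^{k-1}$, which is $+1$ for odd $k$. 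Carrying out the expansion correctly for diagonal $\mathcal E^{(k)}$ gives, for odd $k$ and $M=(I-\Lambda)^{-1}$,
\begin{equation*}
\det\mathcal C^{(k)}_{S_1,\ldots,S_k}\;=\;\sum_{R\subseteq V,\,\#R=n}\Big(\prod_{r\in R}\mathcal E^{(k)}_{r,\ldots,r}\Big)\,\mathrm{perm}\big(M_{R,S_1}\big)\,\prod_{l=2}^{k}\det\big(M_{R,S_l}\big),
\end{equation*}
a permanent on side $1$, just as you predicted; GVL therefore cannot be invoked on side $1$, and side-$1$ collisions do not cancel.

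In fact your ``genuine crux'' is not a gap in your argument but an error in the statement for odd $k$. Take $k=3$, vertices $u,v,w,x_1,x_2,y_1,y_2,z_1,z_2$, edges $u\to w$, $v\to w$, $w\to x_1$, $w\to x_2$, $u\to y_1$, $v\to y_2$, $u\to z_1$, $v\to z_2$, and $S_1=\{x_1,x_2\}$, $S_2=\{y_1,y_2\}$, $S_3=\{z_1,z_2\}$. Every $3$-trek system must have tops $\{u,v\}$ and both side-$1$ paths pass through $w$, so \emph{every} system has a sided intersection; yet the only nonzero entries of $\mathcal C^{(3)}_{S_1,S_2,S_3}$ are $\mathcal C_{x_i,y_1,z_1}$ and $\mathcal C_{x_i,y_2,z_2}$, and the hyperdeterminant equals $\mathcal C_{x_1,y_1,z_1}\mathcal C_{x_2,y_2,z_2}+\mathcal C_{x_1,y_2,z_2}\mathcal C_{x_2,y_1,z_1}=2\,\mathcal E^{(3)}_{u,u,u}\mathcal E^{(3)}_{v,v,v}\lambda_{uw}\lambda_{vw}\lambda_{wx_1}\lambda_{wx_2}\lambda_{uy_1}\lambda_{vy_2}\lambda_{uz_1}\lambda_{vz_2}\neq 0$: the two systems related by your tail swap at $w$ carry the same sign $(-1)^{k-1}=+1$ and add rather than cancel. (The same example shows equation~(\ref{equation_C_after_Cauchy_Binet}) of Lemma~\ref{det_cumulant} is false here: its right-hand side vanishes because $\det M_{\{u,v\},S_1}=0$, while the true determinant does not.) So the correct dichotomy is the one your analysis suggests: for even $k$, either your involution or Cauchy--Binet plus GVL completes the proof of~(\ref{sum_over_trek_system_without_sided_intersection}) as stated; for odd $k$, the sum may only be restricted to systems with no sided intersection on sides $2,\ldots,k$, and side-$1$ intersections must be retained.
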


\begin{proof}[Proof of Proposition \ref{det_T_trek_system_m}]
	\noindent
	From Propositon \ref{prop_equation_entry_C}, we can write
	\begin{equation}
	\begin{aligned}
	\mathcal{C}^{(k)}_{i_1,...,i_k} &= \sum_{T \in \mathcal{T}(i_1,...,i_k)} m_T,
	\end{aligned}
	\end{equation}
	where $m_T$ is the $k$-trek monomial defined by $m_T = {\mathcal{E}^{(k)}_{top(P_{1},..., P_{k})} \lambda^{P_{1}}} ....  \lambda^{P_{k}}$, where $\lambda^{P_{j}} = \prod_{k \rightarrow l \in P_{j}} \lambda_{kl}$.
	
	Assuming that $\#R_1= ...=\#R_k = \#S_1 = ... = \#S_k = n$ and using the Leibniz expansion formula for determinants, we then get:
	\begin{equation}\label{equations_sum_trek_systems}
	\begin{aligned}
	\text{det } \mathcal{C}^{(k)}_{S_1,...,S_k} \ &= \sum_{\sigma_2 \in \mathfrak{S}_{n_{2}},..., \sigma_{k} \in \mathfrak{S}_{n_{k}}} \Bigg(\sum_{\substack{T_1 \in \mathcal{T}(s_1, s_{\sigma_2(1)},
			...,s_{\sigma_{k}(1)})\\
			...\\
			...\\
			T_n \in \mathcal{T}(s_n, s_{\sigma_2(n)},...,s_{\sigma_{k}(n)})}} \text{sign}(\sigma_2) \cdots \text{sign}(\sigma_{k})\ m_{T_1}...m_{T_n} \Bigg)\\
	&= \sum_{\substack{\sigma_i \in \mathfrak{S}_{n_i}\\
			i\in \{2,\ldots,k\}}} \Bigg(\sum_{\substack{T_j \in \mathcal{T}(s_j, s_{\sigma_{2}(j)}, \ldots, s_{\sigma_{k}(j)})\\
			j \in \{1,\ldots,n\}}}
	\prod_{l = 2}^{k}\text{sign}(\sigma_l)\prod_{s = 1}^{n} m_{T_s}\Bigg)\\
	&= \sum_{T\in\mathcal T(S_1,\ldots, S_k)} \text{sign}(T) \  m_{T},
	\end{aligned}
	\end{equation}
	
	\noindent
	where $\mathfrak{S}_{n_{i}}$ is the set of permutations of the nodes in $S_{i}$, $T$ runs over all $k$-trek systems between $S_1,...,S_k$ and $\text{sign}(T) =  \text{sign}(\sigma_2) \cdots \text{sign}(\sigma_{k})$. In this expression, we have $m_{T} = \prod_{x=1}^{n} m_{T_x}$, where $m_{T_x} = \mathcal{E}^{(k)}_{top(P_{s_{1}},...,P_{s_{k}})}$ $\lambda^{P_{s_{1}}} \cdots \lambda^{P_{s_{k}}}$ is the trek monomial corresponding to the trek $T_x$.
	
	\noindent

	In order to now prove the equation  (\ref{sum_over_trek_system_without_sided_intersection}) $$\text{det } \mathcal C^{(k)}_{S_1,\ldots, S_k} =  \sum_{T\in \widetilde{\mathcal{T}}(S_1,\ldots, S_k)} \text{sign}(T) m_{T},$$ we need to show that $m_T = 0$ when $T$ is a trek-system between $S_1,...,S_k$ with a sided intersection.
	
	We first prove a tensor version of the Cauchy-Binet Theorem~\cite{Broida} for the determinant of the product $AB$, where $A$ is a tensor of order $k$ and $B$ is a matrix.
	
	\begin{lemma}\label{Cauchy-Binet theorem}
	Suppose that A is a $\underbrace{p \times n\times p \times ... \times p} _\text{k}$ tensor of order $k$ and $B$ is a $n \times p$ matrix, $I$ is a subset of $\{1,...,p\}$. Then \text{det }$(AB) = \sum_{I\subseteq [n], \#I=p} \text{det }(A_I) \text{det }(B_{I})$, where the sum is over subsets $I$ of $\{1,...,n\}$ such that $\#I = p$, and $A_I$ denotes the subtensor of $A$ given by $A_{[p], I, [p], \ldots, [p]}$, and $B_I$ denotes the submatrix of $B$ given by $B_{I, [p]}$.
\end{lemma}
The proof of Lemma~\ref{Cauchy-Binet theorem}
is presented in Appendix \ref{Appendix A}. Recall that
Lemma \ref{factorization_cumulant_tensor} gives the relationship between the $k$-th order cumulant tensors of the random vectors $\varepsilon$ and $X$,    $\mathcal C^{(k)} = \mathcal E^{(k)}\bullet (I-\Lambda)^{-k} $. We can apply the determinant operator to each side of this equation, using the tensor version of the Cauchy-Binet theorem proved above.
\noindent
\begin{lemma}\label{det_cumulant}
	\noindent
	Consider $k$ subsets of vertices $S_1, ..., S_k \subset V$ with $ \#S_1 = ... = \#S_k$. Then, the determinant of the subtensor of $k^{th}$-order cumulants can be written as:
	\begin{equation}
	\text{det }\mathcal{C}^{(k)}_{S_1,...,S_k} =  \sum\limits_{\substack{R_1,...,R_k \subseteq V, \\ \#R_i = \#S_i}}\text{det }\mathcal{E}^{(k)}_{R_1,...,R_k} \
	\text{det } (I - \Lambda)^{-1}_{R_1,S_1} \ ... \ \text{det }(I - \Lambda)^{-1}_{R_k,S_k}\label{equation_C_after_Cauchy_Binet}
	\end{equation}
\end{lemma}
\begin{proof}
	\noindent
	We apply the tensor version of the Cauchy-Binet Theorem stated in Proposition \ref{Cauchy-Binet theorem} $k$ times to the Tucker product in equation (\ref{tucker_decomposition_C}) and we obtain equation (\ref{equation_C_after_Cauchy_Binet}).
\end{proof}

	By Lemma \ref{det_cumulant}, we have:
	\begin{equation}\label{equation_T_after_Cauchy_Binet}
	\text{det }\mathcal{C}^{(k)}_{S_1,...,S_k} = \sum\limits_{R_1,...,R_k} \text{det }\mathcal{E}^{(k)}_{R_1,...,R_k} \
	\text{det} (I-\Lambda)^{-1}_{R_1,S_1} \ ... \ \text{det}(I-\Lambda)^{-1}_{R_k,S_k}
	\end{equation}

 A nice tool to prove this is given by the Gessel-Viennot-Lindstrom Lemma. We use this Lemma is again in the proof of our main Theorem \ref{main_theorem}. 
	
	\begin{lemma}[\cite{Gessel1985,Lindstroem1973}]\label{GVL_lemma}
		\noindent
		Suppose $G$ is a directed acyclic graph with vertex set $[p] = \{1,...,p\}$. Let $R$ and $S$ be subsets of $[p]$ with $\#R = \#S = n$. Then,
		$$ \text{det} \ (I -\Lambda)^{-1}_{R,S} = \sum_{P \in N(R,S)} (-1)^{P} \lambda^{P},$$
		where $N(R,S)$ is the set of all collections of nonintersecting systems of $n$ directed paths in $G$ from $R$ to $S$, and $(-1)^{P}$ is the sign of the induced permutation of elements from $R$ to $S$. In particular, $\text{det} \ (I-\Lambda)^{-1}_{R,S}$ is identically zero if and only if every system of $n$ directed paths from $R$ to $S$ has two paths which share a vertex.
	\end{lemma}
	
	Thus, equation~\eqref{equation_T_after_Cauchy_Binet} now yields 
	\begin{equation}\notag
	\text{det }\mathcal{C}^{(k)}_{S_1,...,S_k} = \sum\limits_{R_1,...,R_k} \text{det }\mathcal{E}^{(k)}_{R_1,...,R_k} \
	\left(\sum_{P_1\in N(R_1,S_1}(-1)^{P_1}\lambda^{P_1}\right)\cdots \left(\sum_{P_k\in N(R_k,S_k)}(-1)^{P_k}\lambda^{P_k}\right).
	\end{equation}
	Since $\mathcal E^{(k)}$ is diagonal, we have that $\text{det }\mathcal{E}^{(k)}_{R_1,...,R_k}$ is 0 unless $R_1=\cdots=R_k$. Therefore,
	\begin{equation}\notag
	\text{det }\mathcal{C}^{(k)}_{S_1,...,S_k} = \sum\limits_{R\subseteq V, \#R=n} \text{det }\mathcal{E}^{(k)}_{R,\ldots, R} \
	\left(\sum_{P_1\in N(R,S_1)}(-1)^{P_1}\lambda^{P_1}\right)\cdots \left(\sum_{P_k\in N(R,S_k)}(-1)^{P_k}\lambda^{P_k}\right)
	\end{equation}
	\begin{equation}\notag
	= \sum_{R\subseteq V, \#R = n} \prod_{r\in R}\mathcal E^{(k)}_{r,\ldots, r} \left(\sum_{P_1\in N(R,S_1)}(-1)^{P_1}\lambda^{P_1}\right)\cdots \left(\sum_{P_k\in N(R,S_k)}(-1)^{P_k}\lambda^{P_k}\right),
	\end{equation}
	
	which then yields the desired expression in~\eqref{sum_over_trek_system_without_sided_intersection}.
\end{proof}

\begin{example}
Consider, once again, the DAG from Figure~\ref{fig:2}b. Then, we have that
$$\text{det }\mathcal C^{(2)}_{46, 78} = \mathcal E^{(2)}_{1,1}\mathcal E^{(2)}_{2,2}\lambda_{14}\lambda_{17}\lambda_{26}\lambda_{28}.$$
since there is only one 2-trek system between $46$ and 78 without sided intersection, namely $\{(1\to4, 1\to 7),(2\to6, 2\to8)\}$. Similarly, we have
$$\text{det }\mathcal C^{(3)}_{46, 58, 78} = \mathcal E^{(3)}_{1,1,1}\mathcal E^{(3)}_{2,2,2} \lambda_{14}^2\lambda_{45}\lambda_{17}\lambda_{26}\lambda_{28}^2,$$
since there is only one 3-trek system between 46, 58, and 78, namely $\{(1\to4, 1\to4\to5, 1\to7), (2\to6, 2\to8, 2\to8)\}$.
\end{example}

\noindent

\subsection{Main result}

Our main theorem shows that the non-existence of a trek system without sided intersection between $k$ sets of vertices is equivalent to the vanishing of the corresponding subdeterminant of the $k$-th cumulant tensor~$\mathcal C^{(k)}$. 

\begin{theorem} \label{main_theorem}
\noindent
Let $G = (V, \mathcal D)$ be a DAG, and let $S_1,..., S_k$ be subsets of $V$ with $\#S_1 = ... = \#S_k$. Then,
$$ \text{det} \ \mathcal{C}^{(k)}_{S_1,..., S_k} = 0$$ 
for every $\mathcal C^{(k)}$ from $\mathcal M^{(k)}(G)$ if and only if every system of $k$-treks between $S_1,..., S_k$ has a sided intersection.
\end{theorem}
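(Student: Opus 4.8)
The plan is to read the biconditional off a single polynomial identity for the determinant of the cumulant subtensor, treating the model parameters as free indeterminates. First I would observe that the statement ``$\det \mathcal C^{(k)}_{S_1,\ldots,S_k}=0$ for every $\mathcal C^{(k)}\in\mathcal M^{(k)}(G)$'' is equivalent to the vanishing of $\det\mathcal C^{(k)}_{S_1,\ldots,S_k}$ as a polynomial in the free parameters $\{\lambda_{uv}:u\to v\in\mathcal D\}$ and $\{\mathcal E^{(k)}_{r,\ldots,r}:r\in V\}$. Indeed, $\mathcal M^{(k)}(G)$ is the image of a parametrization map whose domain contains a nonempty Euclidean open set (the $\lambda$'s range over all of $\mathbb R^{\mathcal D}$ and the diagonal entries of $\mathcal E^{(k)}$ over an open orthant), so a real polynomial vanishing on the whole model must vanish identically.

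Next I would apply Lemma \ref{det_cumulant} together with the diagonality of $\mathcal E^{(k)}$ (Lemma \ref{lem:diagonal_cumulants}), which kills every term except the diagonal blocks $R_1=\cdots=R_k=R$, to put the polynomial in the form
\begin{equation*}
\det \mathcal C^{(k)}_{S_1,\ldots,S_k}=\sum_{R\subseteq V,\ \#R=n}\Bigl(\prod_{r\in R}\mathcal E^{(k)}_{r,\ldots,r}\Bigr)\prod_{j=1}^{k}\det (I-\Lambda)^{-1}_{R,S_j},
\end{equation*}
where $n=\#S_1=\cdots=\#S_k$. The key structural remark is that the squarefree monomials $\prod_{r\in R}\mathcal E^{(k)}_{r,\ldots,r}$ are distinct for distinct $R$ and involve variables disjoint from the $\lambda$'s, so there can be no cancellation between different index sets $R$. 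Hence the whole polynomial is identically zero if and only if, for every $R$, its coefficient $\prod_{j=1}^k\det(I-\Lambda)^{-1}_{R,S_j}$ vanishes as a polynomial in the $\lambda$'s; and since $\mathbb R[\lambda]$ is an integral domain, this coefficient vanishes exactly when $\det(I-\Lambda)^{-1}_{R,S_j}\equiv 0$ for at least one $j$.

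I would then translate each factor combinatorially with the Gessel--Viennot--Lindstr\"om Lemma (Lemma \ref{GVL_lemma}): $\det(I-\Lambda)^{-1}_{R,S_j}\equiv 0$ precisely when there is \emph{no} nonintersecting system of $n$ directed paths from $R$ to $S_j$. The last step is to reconcile this with the language of trek systems. Here I would use that a $k$-trek system without sided intersection must have distinct tops (two treks sharing a top would share that vertex on every side, a sided intersection), so such a system is exactly the data of a common top-set $R$ of size $n$ together with, for each $j$, a nonintersecting path system from $R$ to $S_j$. Chaining the equivalences yields: the determinant vanishes identically $\iff$ for every $R$ some side $j$ admits no nonintersecting system $\iff$ no $R$ simultaneously admits nonintersecting systems to all of $S_1,\ldots,S_k$ $\iff$ there is no $k$-trek system without sided intersection $\iff$ every $k$-trek system between $S_1,\ldots,S_k$ has a sided intersection.

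The easy implication falls out of \eqref{sum_over_trek_system_without_sided_intersection} directly, since if all trek systems have a sided intersection the indexing set $\widetilde{\mathcal T}(S_1,\ldots,S_k)$ is empty and the determinant is the empty sum. The main obstacle is controlling cancellation for the reverse implication, which is exactly why the non-Gaussian multi-trek case is not automatic. The argument above localizes all cancellation into two manageable pieces: across distinct $R$ it is ruled out by the freedom of the $\mathcal E^{(k)}_{r,\ldots,r}$ parameters, and within a fixed side it is already absorbed into the GVL characterization, so no delicate sign-cancellation analysis among trek-system monomials---as in the classical $2$-trek case---is needed.
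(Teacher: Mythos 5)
Your proposal is correct and follows essentially the same route as the paper's proof: the tensor Cauchy--Binet expansion (Lemma \ref{det_cumulant}) combined with diagonality of $\mathcal E^{(k)}$ to isolate, for each $R$, the coefficient $\prod_{j=1}^{k}\det(I-\Lambda)^{-1}_{R,S_j}$ of the monomial $\prod_{r\in R}\mathcal E^{(k)}_{r,\ldots,r}$ (this is exactly the paper's Lemma \ref{lemma_det_i_equal_0}), followed by the Gessel--Viennot--Lindstr\"om translation and the observation that repeated tops force a sided intersection. Your explicit handling of the no-cancellation step, the integral-domain argument, and the reduction from vanishing on $\mathcal M^{(k)}(G)$ to identical polynomial vanishing merely makes precise what the paper leaves implicit, so there is no substantive difference in approach.
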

Before proving our main theorem, we need to introduce an additional intermediary result. 
\begin{lemma}\label{lemma_det_i_equal_0}
	Consider $k$ subsets of vertices $S_1, ..., S_k \subseteq V$ with $ \#S_1 = ... = \#S_k=n$. Then,
	$\text{det }\mathcal{C}^{(k)}_{S_1,...,S_k}$ is identically $0$ (as a polynomial in the entries of $\mathcal E^{(k)}$ and $\lambda$) if and only if for any set $R \subseteq V$ such that $\#R=n$, there exists $i \in \{1\ldots, k\}$  with  $\text{det }(I-\Lambda)^{-1}_{R,S_i} = 0$.
\end{lemma}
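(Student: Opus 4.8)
The plan is to read the statement off the closed-form expression for the subdeterminant. Combining Lemma~\ref{det_cumulant} with the diagonality of $\mathcal E^{(k)}$ (Lemma~\ref{lem:diagonal_cumulants}), every term with $R_1,\ldots,R_k$ not all equal drops out, and for a diagonal subtensor $\text{det }\mathcal E^{(k)}_{R,\ldots,R}=\prod_{r\in R}\mathcal E^{(k)}_{r,\ldots,r}$, so that
$$\text{det }\mathcal{C}^{(k)}_{S_1,\ldots,S_k} = \sum_{\substack{R\subseteq V\\ \#R = n}} \left(\prod_{r\in R}\mathcal E^{(k)}_{r,\ldots,r}\right)\prod_{i=1}^k \text{det }(I-\Lambda)^{-1}_{R,S_i}.$$
First I would fix this identity as the starting point and regard its right-hand side as a polynomial in two disjoint families of indeterminates: the diagonal noise cumulants $\mathcal E^{(k)}_{r,\ldots,r}$ for $r\in V$, and the edge parameters $\lambda_{uv}$.

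The key observation is that the factor $\prod_{r\in R}\mathcal E^{(k)}_{r,\ldots,r}$ is a squarefree monomial in the $\mathcal E$-variables whose support is exactly $R$; hence distinct subsets $R$ yield distinct monomials, and no two summands can share an $\mathcal E$-monomial. Since each $\prod_{i}\text{det }(I-\Lambda)^{-1}_{R,S_i}$ depends only on the $\lambda$'s, expanding over the joint polynomial ring $\mathbb{R}[\mathcal E,\lambda]$ places the $R$-th summand in the coefficient of a monomial occurring in no other summand. Consequently there is no cancellation across different $R$, and $\text{det }\mathcal{C}^{(k)}_{S_1,\ldots,S_k}$ vanishes identically if and only if each individual summand vanishes identically.

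It then remains to analyze a single summand. Because $\prod_{r\in R}\mathcal E^{(k)}_{r,\ldots,r}$ is a nonzero monomial, the $R$-th summand is identically zero in $\mathbb{R}[\mathcal E,\lambda]$ if and only if its $\lambda$-part $\prod_{i=1}^k \text{det }(I-\Lambda)^{-1}_{R,S_i}$ is identically zero in $\mathbb{R}[\lambda]$. As $\mathbb{R}[\lambda]$ is an integral domain, a product of polynomials vanishes identically precisely when one of its factors does, i.e.\ precisely when there exists $i\in\{1,\ldots,k\}$ with $\text{det }(I-\Lambda)^{-1}_{R,S_i}=0$ identically. Quantifying over all $R$ yields exactly the stated equivalence. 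The only delicate point, and the step I would take the most care over, is the monomial-independence argument: one must make precise that ``identically zero'' refers to the joint ring in $\mathcal E$ and $\lambda$, so that the linear independence of the squarefree monomials $\{\prod_{r\in R}\mathcal E^{(k)}_{r,\ldots,r}\}_R$ genuinely forbids cancellation between the different $R$-blocks. Everything else is formal.
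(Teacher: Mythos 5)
Your proof is correct and takes essentially the same route as the paper's: both start from the tensor Cauchy--Binet expansion of Lemma~\ref{det_cumulant}, use diagonality of $\mathcal E^{(k)}$ to collapse the sum to terms with $R_1=\cdots=R_k=R$, and then argue that since each $\mathcal E$-monomial $\text{det }\mathcal E^{(k)}_{R}=\prod_{r\in R}\mathcal E^{(k)}_{r,\ldots,r}$ appears exactly once, no cancellation between distinct $R$-blocks is possible, so the whole determinant vanishes identically if and only if each product $\prod_{i=1}^k\text{det }(I-\Lambda)^{-1}_{R,S_i}$ does. Your write-up merely makes explicit two steps the paper leaves terse --- the linear independence of the squarefree $\mathcal E$-monomials in the joint polynomial ring and the integral-domain argument that a vanishing product of $\lambda$-determinants forces one factor to vanish identically --- which is a welcome sharpening rather than a different approach.
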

\begin{proof}
	\noindent
	Let us suppose that $\text{det }\mathcal{C}^{(k)}_{S_1,...,S_k}$ is identically 0. From equation (\ref{equation_C_after_Cauchy_Binet}) in Lemma \ref{det_cumulant}, we have:
	\begin{align*}
	\text{det }\mathcal{C}^{(k)}_{S_1,...,S_k} = \sum\limits_{\substack{R_1,...,R_k \subseteq V, \\ \#R_i = \#S_i}} \text{det } \mathcal{E}^{(k)}_{R_1,...,R_k} \text{det}(I-\Lambda)^{-1}_{R_1, S_1} \cdots \text{det}(I-\Lambda)^{-1}_{R_k, S_k},
	\end{align*}
	where the sum runs over subsets $R_1,...,R_k$ of $V$ of cardinality $\#R_1=...=R_k = \#S_1 = ... = \#S_k = n$. However, since $\mathcal{E}^{(k)}$ is a diagonal tensor, $\text{det } \mathcal{E}^{(k)}_{R_1,...,R_k} = 0$ unless $R_1 = R_2 = ... = R_k = R$. In this case, denoting $\text{det }\mathcal{E}^{(k)}_{R_1,...,R_k} = \text{det }\mathcal{E}^{(k)}_{R}$ we get:
	\begin{align}
	\text{det }\mathcal{C}^{(k)}_{S_1,...,S_k} = \sum\limits_{R\subseteq V} \text{det } \mathcal{E}^{(k)}_{R} \text{det}(I - \Lambda)^{-1}_{R, S_1} \cdots \text{det}(I-\Lambda)^{-1}_{R, S_k}
	\end{align}
	Each monomial $\text{det } \mathcal{E}^{(k)}_{R}$ appears only once, therefore for any set $R$ satisfying $\#R = \#S_1 = ... = \#S_k = n$, there exists $i \in \{1,\ldots, k\}$  such that  $\text{det}(I-\Lambda)^{-1}_{R,S_i} = 0$, which proves the if-direction.
	
	Let us now suppose that for any set $R$ satisfying $\#R = \#S_1 = ... = \#S_k = n$, there exists $i \in \{1,\ldots, k\}$  such that  $\text{det}(I-\Lambda)^{-1}_{R,S_i} = 0$. Then from the expression (\ref{equation_C_after_Cauchy_Binet}), we conclude that $\text{det } \mathcal{C}^{(k)}_{S_1,...,S_k} = 0$.
\end{proof}

We now present the proof of Theorem~\ref{main_theorem} which relies mostly on the Gessel-Viennot-Lindstrom Lemma (Lemma \ref{GVL_lemma}) and Lemma \ref{lemma_det_i_equal_0}.
\noindent
\begin{proof}[Proof of Theorem~\ref{main_theorem}]
	\noindent
	Let us first suppose that $\text{det} \ \mathcal{C}^{(k)}_{S_1,...,S_k} = 0$ and let $T$ be a $k$-trek system between $S_1,...,S_k$.
	\begin{itemize}
		\item If all elements of the multiset $\text{top}(T)$ are distinct, then Lemma~\ref{lemma_det_i_equal_0} implies that there exists an integer $i \in [k]$ such that $\text{det}(I -\Lambda)^{-1}_{top({T}),S_i} = 0$. By Lemma~\ref{GVL_lemma}, any path system from $\text{top}(T)$ to $S_i$ has a sided intersection, therefore ${T}$ has a sided intersection.
		\item If $\text{top}(T)$ has repeated elements, then at least two $k$-treks in $T$ intersect, namely at their top, hence ${T}$ has a sided intersection.
	\end{itemize}

	Conversely, let's suppose that every $k$-trek system $(T)$ between $S_1,...,S_k$ has a sided intersection. 
	Let's consider any set $R \subseteq V$ that satisfies $\#R = \#S_1 = ... = \#S_k$. 
	\begin{itemize}
		\item If $R$ forms the top of a $k$-trek system that ends at $S_1,...,S_k$, then there exists at least one integer $ i \in [k]$ such that there is a sided intersection in any path system between $R$ and $S_i$. By the Gessel-Viennot-Linstrom Lemma \ref{GVL_lemma}, $\text{det}(I-\Lambda)^{-1}_{R,S_i} = 0$, i.e.,  $\text{det}(I-\Lambda)^{-1}_{R,S_1} ...\ \text{det}(I-\Lambda)^{-1}_{R,S_k} = 0$ and therefore $\text{det } \mathcal E^{(k)}_R\text{det}(I-\Lambda)^{-1}_{R,S_1} ...\ \text{det}(I-\Lambda)^{-1}_{R,S_k} = 0$.
		\item Alternatively, if $R$ does not form the top of a $k$-trek system that ends at $S_1,...,S_k$, then there is no $k$-path system from $R$ to $S_1,...,S_k$, i.e., there is no path system between $R$ and at least one of $S_1,...,S_k$. This implies that at least one of $\text{det }(I-\Lambda)^{-1}_{R,S_i} = 0$ and therefore $\text{det } \mathcal E^{(k)}_R\text{det}(I-\Lambda)^{-1}_{R,S_1} ...\ \text{det}(I-\Lambda)^{-1}_{R,S_k} = 0$.
	\end{itemize}
	Thus, $\text{det }\mathcal{C}^{(k)}_{S_1,...,S_k} = \sum\limits_{R\subseteq V} \text{det } \mathcal{E}^{(k)}_{R} \text{det}(I - \Lambda)^{-1}_{R, S_1} \cdots \text{det}(I-\Lambda)^{-1}_{R, S_k} = 0$.
\end{proof}

\begin{example}\label{example_main_theorem} Theorem \ref{main_theorem} enables us to determine whether random variables have a common cause. Consider the graphs in Figures~\ref{fig:fig3} and \ref{fig:fig2} below.
\begin{figure}[H]
	\centering
	\begin{subfigure}[b]{0.2\linewidth}
		\includegraphics[scale=0.29]{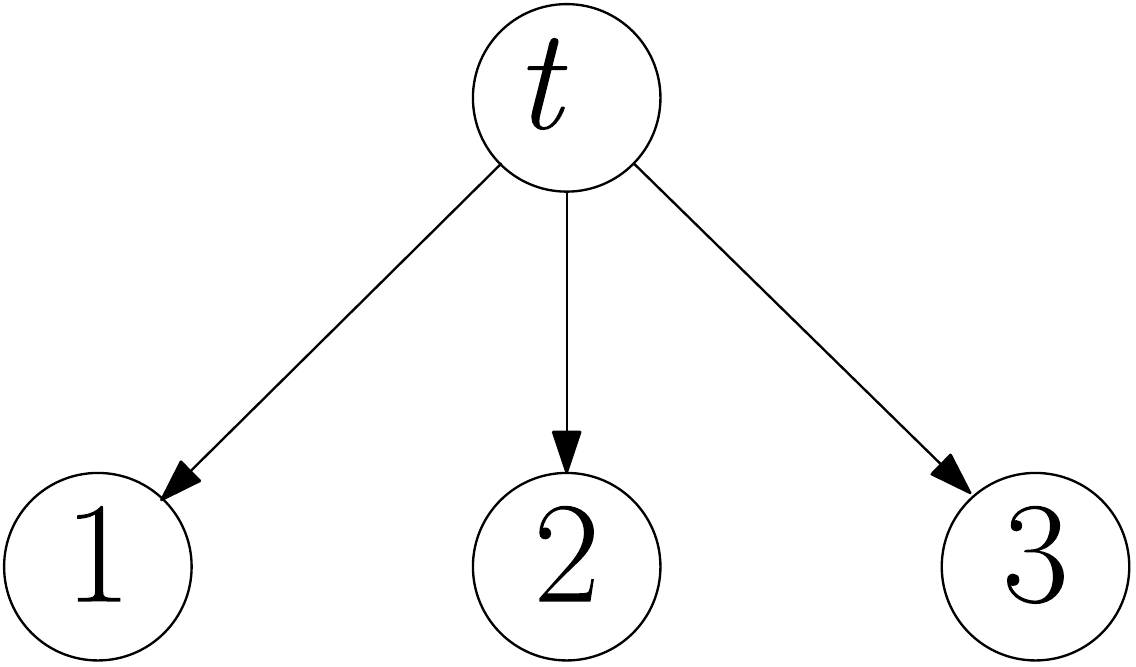}
	\caption{\label{fig:fig3}}
	\end{subfigure}\hspace{0.1\textwidth}%
	\begin{subfigure}[b]{0.2\linewidth}
		\includegraphics[scale = 0.5]{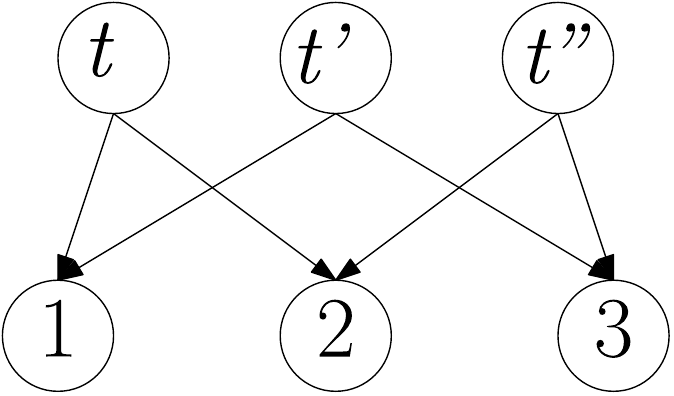}
	\caption{\label{fig:fig2}}
	\end{subfigure}
	\caption{}
\end{figure}
\noindent
Let $A = \{1\}$, $B = \{2\}$, and $C = \{3\}$. In Figure~\ref{fig:fig3} there is one $3$-trek joining $A,B$ and $C$, thus, det$(\mathcal{C}^{(3)}_{ABC}) = \mathcal{C}^{(3)}_{123}\neq$ 0. In Figure~\ref{fig:fig2} there is no $3$-trek joining $A,B$, and $C$ and, a fortiori, no $3$-trek without sided intersection. Therefore, det$(\mathcal{C}^{(3)}_{ABC}) = 0$.
\end{example}

The seminal paper~\cite{Sullivant2008} shows that the vanishing of determinants of the covariance matrix of $X$ is equivalent to a 2-trek separation criterion in the graph $G$. In the rest of this section, we illustrate that a generalization of this criterion to the case $k >2$ only works in one direction.
\noindent
\begin{definition}
\noindent
The collection of sets $(A_1,...,A_k)$ {\em $k$-trek-separates} $S_1,...,S_k$ if for every $k$-trek with paths $(P_{1}, ..., P_{k})$ between $S_1,...,S_k$, there exists $j \in \{1,...,k\}$ such that $P_j$ contains a vertex from $A_j$.
\end{definition} 

\begin{figure}[H]
	\centering
	\includegraphics[width=0.27\linewidth]{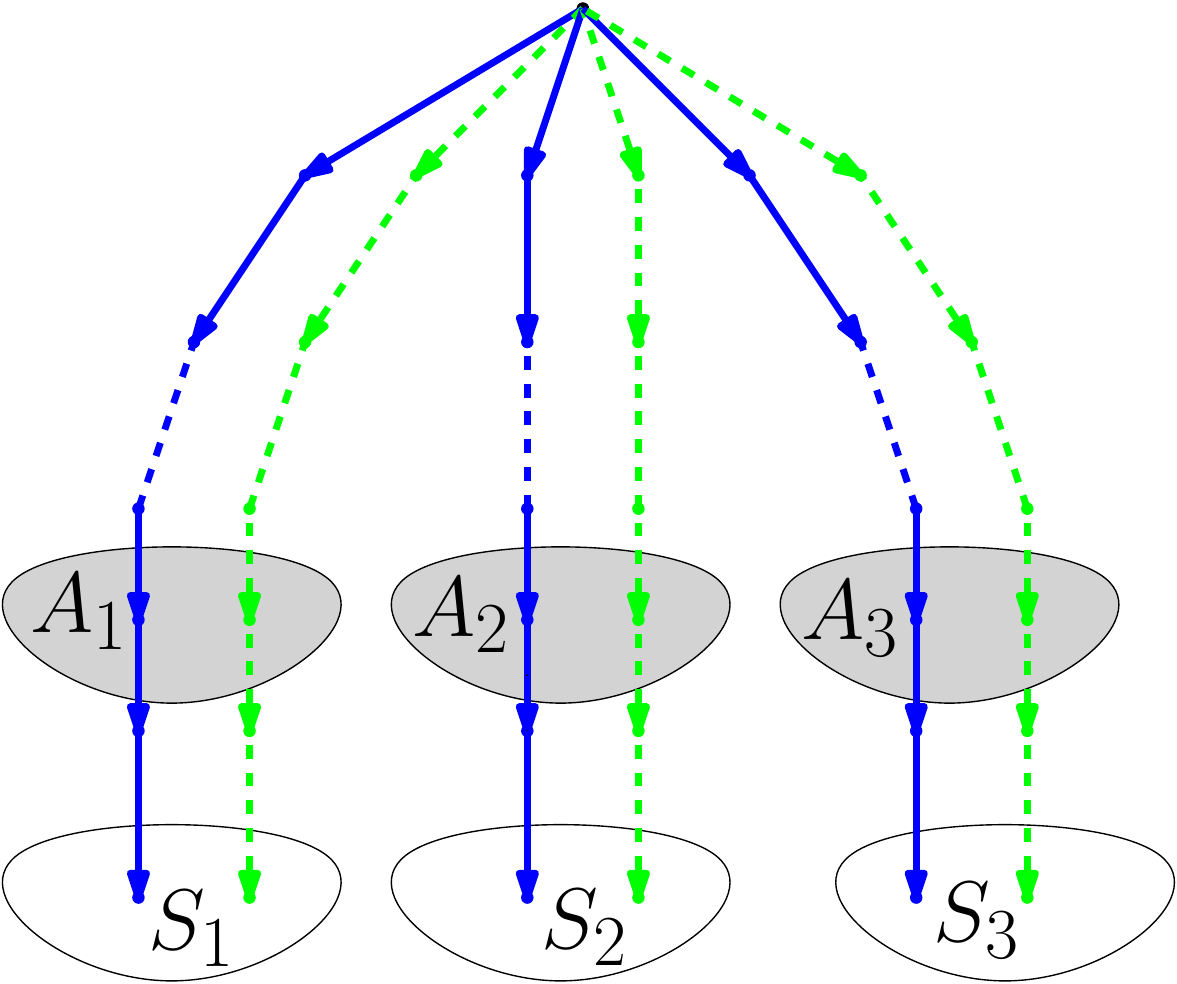}
	\caption{}
	\label{tri_trek_separation}
\end{figure}
\begin{example}
In Figure \ref{tri_trek_separation}, the sets $S_1, S_2$ and $S_3$ are 3-trek-separated by the sets $A_1, A_2$ and $A_3$.

\end{example}

\begin{theorem}[{\cite[Theorem~2.8]{Sullivant2008}}]
\noindent
The submatrix $\Sigma_{A,B}$ has rank less than or equal to r for all covariance matrices consistent with the graph $G$ if and only if there exist subsets $C_A, C_B \subset V$ with $\#C_A + \#C_B\leq r$ such that $(C_A,C_B)$ 2-trek-separates $A$ from $B$. Consequently, 
$$\text{rk}(\Sigma_{A,B}) \leq \text{min}\{\#C_A + \#C_B: (C_A,C_B) \text{ 2-trek-separates } A \text{ from } B\}$$ and equality holds for generic covariance matrices in the model $\mathcal M^{(2)}(G)$.
\end{theorem}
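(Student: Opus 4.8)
The plan is to reduce the rank statement to the identical vanishing of minors already characterized for $k=2$, and then to prove a Menger-type min-max equality matching the largest size of a $2$-trek system without sided intersection to the smallest cost of a trek-separating pair.

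First I would specialize the earlier machinery to covariance matrices. Since $\mathcal C^{(2)}=\Sigma$ and $\mathcal M^{(2)}(G)$ is parametrized by a free $\Lambda\in\mathbb R^{\mathcal D}$ together with a diagonal $\mathcal E^{(2)}\succeq 0$, a subdeterminant $\det\Sigma_{S_1,S_2}$ vanishes for every covariance matrix in the model if and only if it vanishes identically as a polynomial in $(\Lambda,\mathcal E^{(2)})$, because the parameter set is Zariski dense. The condition $\mathrm{rk}(\Sigma_{A,B})\le r$ for all $\Sigma\in\mathcal M^{(2)}(G)$ is exactly the identical vanishing of every $(r+1)\times(r+1)$ minor $\det\Sigma_{S_1,S_2}$ with $S_1\subseteq A$, $S_2\subseteq B$ and $\#S_1=\#S_2=r+1$. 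By Theorem~\ref{main_theorem} in the case $k=2$, each such minor vanishes identically iff every $2$-trek system between $S_1$ and $S_2$ has a sided intersection. Hence, writing $N(A,B)$ for the largest $n$ admitting subsets $S_1\subseteq A$, $S_2\subseteq B$ with $\#S_1=\#S_2=n$ joined by a $2$-trek system without sided intersection, the rank condition is precisely $N(A,B)\le r$.

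It then remains to establish the combinatorial min-max
$$N(A,B)=\min\{\#C_A+\#C_B:\ (C_A,C_B)\ 2\text{-trek-separates } A \text{ from } B\},$$
from which both directions of the biconditional follow at once, since $N(A,B)\le r$ holds iff a separating pair of total size at most $r$ exists. The easy inequality $N(A,B)\le \min\text{cost}$ is purely combinatorial: given a trek system of size $n$ without sided intersection and any separating pair $(C_A,C_B)$, each of its $n$ treks is blocked on the $A$-side by $C_A$ or on the $B$-side by $C_B$, and sided-disjointness forces these blocks to use distinct vertices, so $n\le\#C_A+\#C_B$; the corresponding statistical bound $\mathrm{rk}(\Sigma_{A,B})\le\#C_A+\#C_B$ then comes from factoring $\Sigma_{A,B}$ through the cut by splitting each trek at its first passage through $C_A$ or $C_B$. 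For the reverse inequality I would construct a flow network, splitting each vertex $v$ into an $A$-side node and a $B$-side node each of unit capacity, adding a super-source feeding the $A$-side copies of $A$ and a super-sink draining the $B$-side copies of $B$, and orienting the DAG edges so that an $s$–$t$ path first moves upward (reversed edges) through $A$-side copies to a top and then downward (forward edges) through $B$-side copies. Unit capacities encode exactly that the paths to $A$ are mutually disjoint and the paths to $B$ are mutually disjoint, so integral $s$–$t$ flows of value $n$ correspond to $2$-trek systems of size $n$ without sided intersection, while $s$–$t$ cuts correspond to separating pairs of cost $\#C_A+\#C_B$; the integral max-flow/min-cut theorem yields $\min\text{cost}\le N(A,B)$.

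The main obstacle is precisely this gadget: one must verify that the vertex-splitting records same-side intersections while correctly permitting an up-path and a down-path to share a vertex, and that a minimum cut translates back into a legitimate pair $(C_A,C_B)$ of cost exactly $\#C_A+\#C_B$, including the degenerate treks consisting of a single vertex. Finally, for the asserted generic equality I would argue that when $N(A,B)=r^\ast$ is attained by a trek system without sided intersection between some $S_1,S_2$, the polynomial $\det\Sigma_{S_1,S_2}$, expanded over trek systems without sided intersection as in Proposition~\ref{det_T_trek_system_m}, is not identically zero: a Lindström–Gessel–Viennot-style non-cancellation argument—choosing, say, a shortest such trek system—exhibits a monomial that no other trek system produces, so it survives. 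This forces $\mathrm{rk}(\Sigma_{A,B})\ge r^\ast$ on a Zariski-dense subset of $\mathcal M^{(2)}(G)$, and together with the always-valid upper bound it gives equality for generic covariance matrices in the model.
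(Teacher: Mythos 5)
This statement is imported by the paper from Sullivant--Talaska--Draisma without proof, so there is no internal proof to compare against; your reconstruction is correct and follows exactly the argument of the cited source, which the paper itself gestures at in the remark after Corollary~\ref{corollary_k_trek_separation} (``the reason the statement holds when $k=2$ is precisely because of the min-cut max flow (Menger's) Theorem''). Your three steps --- reducing the rank bound to identical vanishing of $(r+1)$-minors via Zariski density of the parameter set, invoking the $k=2$ case of Theorem~\ref{main_theorem} (equivalently the Gessel--Viennot--Lindstr\"om expansion of Proposition~\ref{det_T_trek_system_m}) to translate this into the nonexistence of trek systems without sided intersection, and closing the min-max gap with the two-copy vertex-splitting flow network --- are precisely those of the original proof, and the delicate points you flag (the crossing edge from each vertex's up-copy to its down-copy so that a single vertex may serve as top or as a degenerate trek, unit capacities per copy rather than per vertex, and reading a minimum vertex cut back as a pair $(C_A,C_B)$ counted with multiplicity) are exactly the ones the gadget is designed to handle, and they do work out.
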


In Corollary \ref{corollary_k_trek_separation}, we show that when $k\geq 3$, $k$-trek-separation implies the vanishing of the corresponding cumulant tensor determinant (but not necessarily vice-versa).
\begin{corollary}\label{corollary_k_trek_separation}
\noindent
Consider $k$ sets of vertices $S_1,...,S_k$ with $ \#S_1 = ... = \#S_k = n$. For all tensors $C^{(k)}$ of $k^{th}$-order cumulants consistent with the graph G, the subtensor $\mathcal{C}^{(k)}_{S_1,...,S_k}$ has a null determinant if there exist subsets $A_1,..., A_k \subset V$ with $ \#A_1 + ... + \#A_k < n$ such that $(A_1,...,A_k)$ $k$-trek separates $S_1,..., S_k$.
\end{corollary}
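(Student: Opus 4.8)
The plan is to reduce the statement to Theorem~\ref{main_theorem}, which asserts that $\det \mathcal{C}^{(k)}_{S_1,\ldots,S_k} = 0$ for every $\mathcal{C}^{(k)}$ in $\mathcal M^{(k)}(G)$ if and only if every $k$-trek system between $S_1,\ldots,S_k$ has a sided intersection. So it suffices to show that the $k$-trek-separation hypothesis, combined with the cardinality bound $\#A_1 + \cdots + \#A_k < n$, forces every $k$-trek system to have a sided intersection.

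First I would fix an arbitrary $k$-trek system $T$ between $S_1,\ldots,S_k$. Since $\#S_1 = \cdots = \#S_k = n$ and each $k$-trek contributes exactly one endpoint to each side, $T$ consists of exactly $n$ $k$-treks. For each $k$-trek $(P_1,\ldots,P_k)$ in $T$, the separation hypothesis supplies an index $j \in \{1,\ldots,k\}$ and a vertex $a \in A_j$ lying on the path $P_j$; I would record one such pair $(j,a)$ as the \emph{witness} of that trek. The total number of available witnesses is the number of pairs $(j,a)$ with $a \in A_j$, namely $\sum_{j=1}^k \#A_j$, which is strictly less than $n$ by hypothesis.

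The key step is then a pigeonhole argument: there are $n$ treks but fewer than $n$ possible witnesses, so two distinct treks $(P_1,\ldots,P_k)$ and $(Q_1,\ldots,Q_k)$ in $T$ must share a witness $(j,a)$. By construction both $P_j$ and $Q_j$ pass through the vertex $a$, so these two treks meet on their $j$-th side, which is exactly a sided intersection. Since $T$ was arbitrary, every $k$-trek system between $S_1,\ldots,S_k$ has a sided intersection, and Theorem~\ref{main_theorem} yields $\det \mathcal{C}^{(k)}_{S_1,\ldots,S_k} = 0$ for all consistent $\mathcal{C}^{(k)}$, as claimed.

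The argument is short once Theorem~\ref{main_theorem} is available; the step I would be most careful about is the witness-counting. A single trek may be blocked on several sides simultaneously, so I must commit to choosing exactly one witness per trek to make the comparison of $n$ treks against $\sum_j \#A_j < n$ slots valid. It is also worth emphasizing that the \emph{strict} inequality is what drives the pigeonhole: if $\sum_j \#A_j = n$ the counting would permit a bijection between treks and witnesses and the conclusion could fail, which is consistent with the paper's remark that for $k \geq 3$ the converse implication does not hold.
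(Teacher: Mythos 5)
Your proposal is correct and follows essentially the same argument as the paper: reduce to Theorem~\ref{main_theorem} and use the strict cardinality bound with a pigeonhole argument to force two treks of any $k$-trek system to meet on the same side at a common vertex of some $A_j$. In fact your version is slightly more careful than the paper's, since you pigeonhole on witness pairs $(j,a)$ with $a\in A_j$ (of which there are $\sum_j \#A_j < n$) rather than only on the side indices, which is the counting needed to conclude that the two treks pass through the \emph{same} vertex.
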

\begin{proof}

Let us suppose that there exist $A_1,...,A_k$ such that $\#A_1 + ... + \#A_k < n$ and $(A_1,...,A_k)$ $k$-trek-separates $S_1,...,S_k$. Consider a $k$-trek system $T = (T_1,\ldots, T_n)$ between $S_1,\ldots, S_k$. Then, for every $i=1,\ldots, n$, there exists $m_i$ such that the $m_i$-th component of $T_i$ intersects $A_{m_i}$. Since $\#A_1 + \cdots+\#A_k <n
$, by the Pigeon-Hole Principle, there exist $i\neq j$ such that $m=m_i=m_j$, and the $m$-th components of $T_i$ and $T_j$ go through the same element $s\in A_m$. Therefore, $T$ has a sided intersection. Thus, every $k$-trek system between $S_1,\ldots, S_k$ has a sided intersection, and, therefore, by Theorem~\ref{main_theorem}, det $\mathcal{C}^{(k)}_{S_1,...,S_k} = 0$. 

\end{proof}
\noindent
\begin{remark}
When $k \geq 3$, the reverse implication of Corollary~\ref{corollary_k_trek_separation} is not true, i.e., \text{det }($\mathcal{C}^{(k)}_{S_1,...,S_k}) = 0$ does not imply that there exists $(A_1,...,A_k)$ such that $\#A_1 + ... + \#A_k < n$ and $(A_1,...,A_k)$ $k$-trek-separates $(S_1,...,S_k)$. Consider the graph in Figure~\ref{counterexample_t_separation} below. In this graph, $\#S_1 = \#S_2 = \#S_3 = 2$. There is no system of two 3-treks between $S_1, S_2, S_3$ without sided intersection. However, it is not possible to find three sets $A_1, A_2, A_3$ such that $\#A_1 + \#A_2 + \#A_3 < 2 = \#S_i$, i.e., there is no such set that 3-trek-separates $S_1,S_2,S_3$. Intuitively, this happens because "max flow" and "min cut" are no longer equal, while the reason the statement holds when $k=2$ is precisely because of the min-cut max flow (Menger's) Theorem \cite{Menger1927}, which was used in the proof of the trek-separation theorem by \cite{Sullivant2008}.

\begin{figure}[H]
\centering
\includegraphics[width=0.35\linewidth]{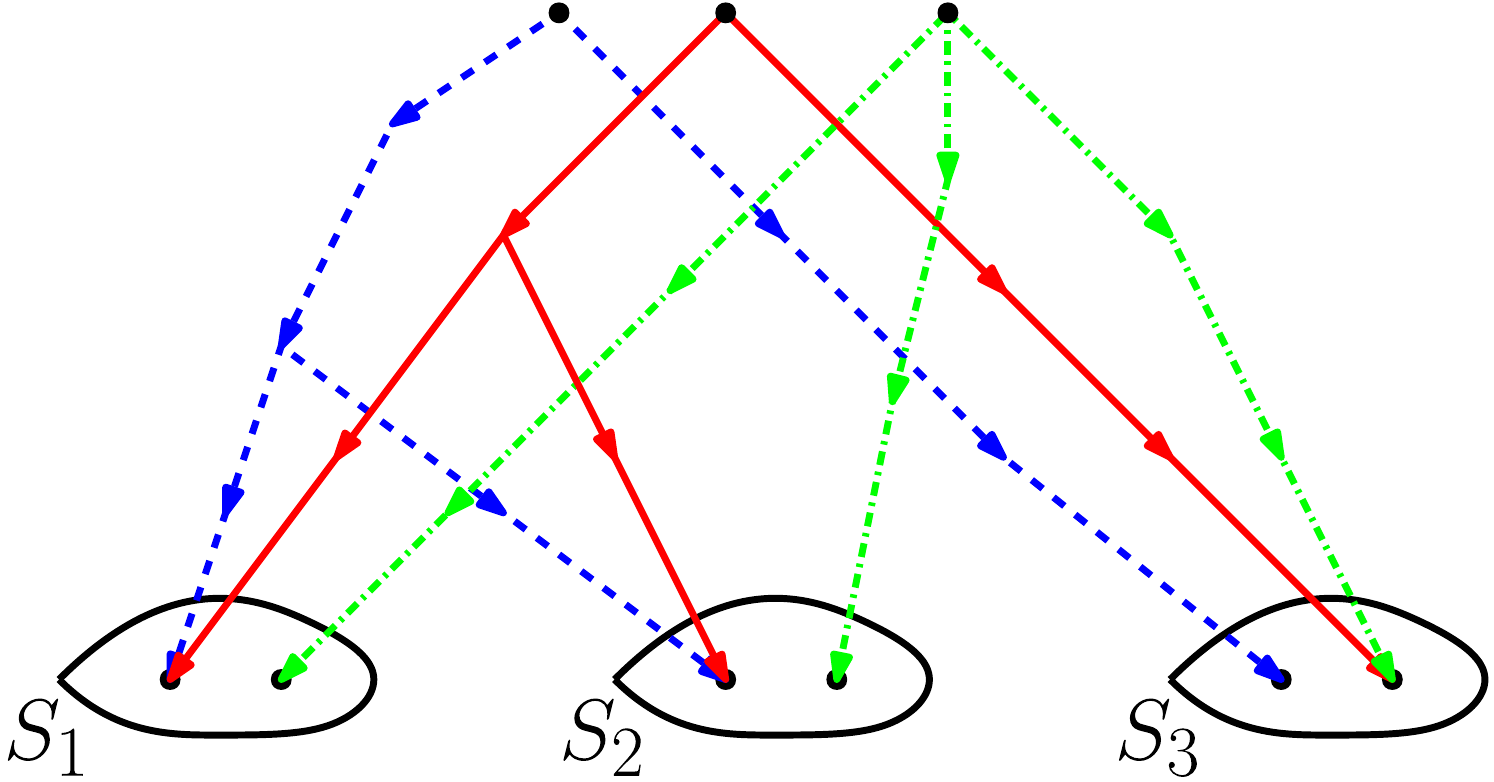}
\caption{}
\label{counterexample_t_separation}
\end{figure}
\end{remark}
\section{Hidden variables}\label{hidden variables}\label{section_4}
We now consider the case where the structural equation model (\ref{general equation_LSEM}) involves some hidden (i.e., unobserved) variables. Alternatively, it is similar to think of  the noise variables $\varepsilon_i$ as correlated. We represent such a case with a mixed graph $G = (V,\mathcal{D},\mathcal{H})$, where $V$ is the set of vertices, $\mathcal{D} \subseteq V\times 
V$  is the set of directed edges, and $\mathcal{H}$ is the set of \emph{multidirected edges} (see Definition~\ref{multidirected_edge} below), which signify the dependencies between the $\varepsilon$ variables. We assume that $G$ does not contain any cycle nor loop.

\begin{definition}\label{multidirected_edge}
A multidirected edge between nodes $i_1,...,i_k$ is the union of $k$ directed edges with the same source and with sinks $i_1,...,i_k$. The $k$-directed edges are merged at their source without an additional node. We call $k$ the \emph{order} of the multidirected edge.
\end{definition}
\noindent

\begin{figure}[h!] 
\centering
\includegraphics[width=0.2\linewidth]{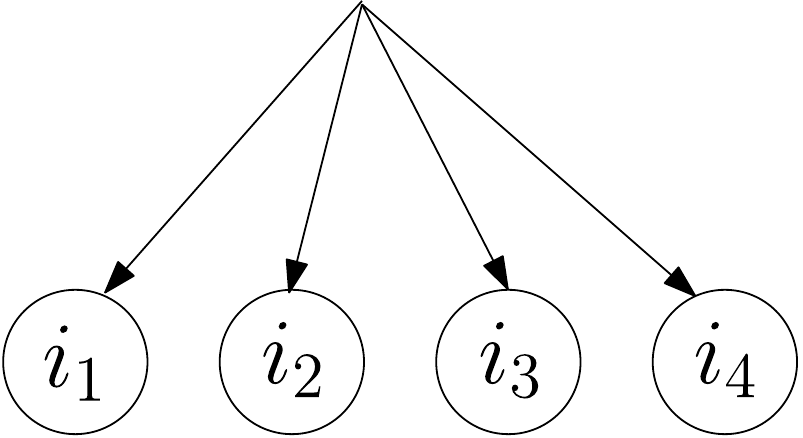} 
\caption{A multidirected edge between $i_1,i_2,i_3,i_4$ of order 4.}
\end{figure}

\begin{remark}
Note that this is a generalization of the notion of bidirected edges widely used in the literature. \end{remark}

\noindent
As before, define $\mathcal{E}^{(k)}$ to be the tensor of $k$-th order cumulants of the variables $\varepsilon_i$. Since we do not assume the independence of the variables $\varepsilon_i$, then any entry of the tensor $\mathcal{E}^{(k)}$ may be non-zero. Specifically, $\mathcal{E}^{(k)}$ is a $\underbrace{V \times V \times ... \times V}_k$ tensor for which the entry $w_{i_1,...,i_k}$ is non-zero if there exists a multidirected edge between $a_1,...,a_l$ such that $\{i_1,...,i_k\} \subseteq \{a_1,...,a_l\}$. Note that some of the elements $i_1,\ldots, i_k$ could be equal.

We now define the notion of a $k$-trek in this setting.
\begin{definition}
A $k$-trek between vertices $i_1,...,i_k$ in a mixed graph $G$ is composed of $k$ directed paths $(P_1,\ldots, P_k)$ where $P_s$ goes from $j_s$ to $i_s$, and
the {\em tops} $j_1,\ldots, j_k$ are connected in one of the following ways.
\begin{enumerate}
    \item[(a)] Either the vertices $j_1,\ldots, j_k$ coincide to form the top of the trek; or
    \item[(b)] There exist an $l$-directed edge between $a_1,...,a_l$ such that $\{j_1,...,j_k\} \subseteq \{a_1,...,a_l\}$.
\end{enumerate}
\end{definition}

\begin{example}
In the mixed graph from Figure~\ref{example_k_trek_hidden_variables_case_1},
$(1 \rightarrow 7, 1 \rightarrow 6, 1 \rightarrow 4 \rightarrow 5)$ is a 3-trek between 7, 6 and 5.
The tops of the paths going to the nodes 7, 6 and 5 coincide with node 1.    
In the mixed graph from Figure~\ref{example_k_trek_hidden_variables_case_2}, 
$(7, 6, 4 \rightarrow 5)$ is a 3-trek between 7, 6 and 5. The tops of the paths are connected by a 3-directed edge between $4,6,$ and $7$.
\begin{figure}[H]
\centering
	\begin{subfigure}[b]{0.4\linewidth}
     \includegraphics[scale=0.47]{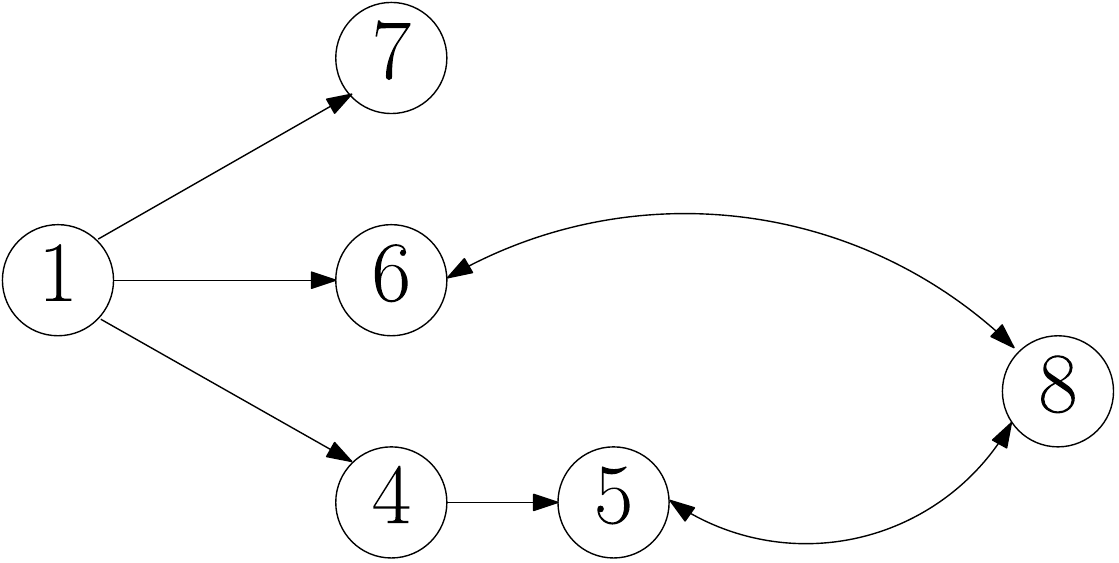}
     \caption{ \label{example_k_trek_hidden_variables_case_1}}
    \end{subfigure}
	\begin{subfigure}[b]{0.4\linewidth}
	\includegraphics[scale=0.2]{figure_DAG_intro_hidden_multi_edge.pdf}
	\caption{\label{example_k_trek_hidden_variables_case_2}}
\end{subfigure}
\caption{}
\end{figure}
\end{example}

\begin{definition}
The linear structural equation model $X_j = \sum_{i\in\text{pa}(j)} \lambda_{ij}X_i + \varepsilon_j$ given by a mixed graph $G = (V,\mathcal D,\mathcal H)$, where $V=[p]$, is the family of distributions on $\mathbb{R}^p$ with tensor of $k^{th}$-order cumulants in the set:
\begin{equation}
    \mathcal{M}^{(k)}(G) = \{\mathcal{E}^{(k)}\bullet (I-\Lambda)^{-k} : \Lambda = (\lambda_{ij}) \in \mathbb{R}^{\mathcal D}, \mathcal E^{(k)}\in(\mathbb R^{\otimes k})^\mathcal H\}
\end{equation}
where $(\mathbb R^{\otimes k})^\mathcal H$ denotes the set of $p\times\cdots\times p$ ($k$-times) cumulant tensors of $\varepsilon$, which are zero at all entries $(i_1, \ldots, i_k)$ unless $i_1=\cdots =i_k$ or  there exists a multi-directed edge $(j_1,\ldots, j_\ell)\in\mathcal H$ such that $\{i_1,\ldots, i_k\}\subseteq \{j_1,\ldots, j_\ell\}$.
\end{definition}

 Extending the $k$-trek rule, Proposition \ref{prop_equation_entry_C}, to the hidden variable case, we show that every entry of the tensor $\mathcal{C}^{(k)}$ can be expressed as a sum of $k$-trek monomials as well. 
\begin{corollary}
For a noise vector $\varepsilon$ whose entries are dependent, the entries of the $k$-th order cumulant tensor $\mathcal C^{(k)}$ of $X$ can be expressed as a sum over {\em $k$-trek monomials},
\begin{equation}\label{eq:equation_entry_C_hidden_case}
\mathcal{C}^{(k)}_{i_1,...,i_k} = \sum_{(P_1,...,P_k) \in \mathcal{T}(i_1,...,i_k)} {\mathcal E^{(k)}_{top(P_{1}),..., top(P_{k})} \lambda^{P_{1}}} ... \ \lambda^{P_{k}},
\end{equation}
where $\mathcal{T}{(i_1,...,i_k)}$ is the set of all $k$-treks between $i_1,...,i_k$.
\end{corollary}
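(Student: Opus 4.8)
The plan is to follow the proof of Proposition~\ref{prop_equation_entry_C} almost verbatim, diverging only at the single step where independence (hence diagonality of $\mathcal{E}^{(k)}$) was used. I would begin with the Tucker decomposition of Lemma~\ref{factorization_cumulant_tensor}, which holds for \emph{any} cumulant tensor $\mathcal{E}^{(k)}$ and never uses independence of the $\varepsilon_i$:
$$\mathcal{C}^{(k)}_{i_1,\ldots,i_k} = \sum_{j_1,\ldots,j_k} \mathcal{E}^{(k)}_{j_1,\ldots,j_k}\,((I-\Lambda)^{-1})_{j_1,i_1}\cdots((I-\Lambda)^{-1})_{j_k,i_k}.$$
In the hidden-variable setting I cannot collapse this to its diagonal, so I keep the full sum over the tuples $(j_1,\ldots,j_k)$.

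Next I would substitute the path expansion~\eqref{entries_I-Lambda}, writing each factor as $((I-\Lambda)^{-1})_{j_s,i_s} = \sum_{P_s\in\mathcal{P}(j_s,i_s)}\lambda^{P_s}$, to obtain
$$\mathcal{C}^{(k)}_{i_1,\ldots,i_k} = \sum_{j_1,\ldots,j_k}\mathcal{E}^{(k)}_{j_1,\ldots,j_k}\prod_{s=1}^k\Bigl(\sum_{P_s\in\mathcal{P}(j_s,i_s)}\lambda^{P_s}\Bigr).$$
Each resulting monomial is indexed by a choice of sources $j_1,\ldots,j_k$ together with directed paths $P_s$ from $j_s$ to $i_s$; since $top(P_s)=j_s$, such a tuple $(P_1,\ldots,P_k)$ is exactly a candidate $k$-trek whose tops are $j_1,\ldots,j_k$.

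The crux is then to identify which monomials survive. The coefficient $\mathcal{E}^{(k)}_{j_1,\ldots,j_k}$ vanishes unless $(j_1,\ldots,j_k)$ lies in the support prescribed by $(\mathbb{R}^{\otimes k})^{\mathcal{H}}$, that is, unless either all $j_s$ coincide or $\{j_1,\ldots,j_k\}$ is contained in the sink set of some multidirected edge of $\mathcal{H}$. But these are precisely cases~(a) and~(b) in the definition of a $k$-trek in a mixed graph. Hence a monomial contributes a nonzero term if and only if its associated tuple $(P_1,\ldots,P_k)$ is a genuine $k$-trek between $i_1,\ldots,i_k$, and conversely every such $k$-trek arises from exactly one monomial. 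Re-indexing the surviving terms by $\mathcal{T}(i_1,\ldots,i_k)$ and writing $\mathcal{E}^{(k)}_{top(P_1),\ldots,top(P_k)}$ in place of $\mathcal{E}^{(k)}_{j_1,\ldots,j_k}$ yields~\eqref{eq:equation_entry_C_hidden_case}.

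The only real obstacle is bookkeeping rather than substantive computation: I must verify that the support of $\mathcal{E}^{(k)}$ for the mixed-graph model coincides exactly with the two-case trek-top condition, taking care of the degenerate situations in which some of the indices $i_1,\ldots,i_k$ — and hence some of the tops — happen to coincide, so that repeated tops are counted with the correct multiplicity. Since this amounts to comparing the definition of $(\mathbb{R}^{\otimes k})^{\mathcal{H}}$ with the definition of a mixed-graph $k$-trek, I expect it to be straightforward.
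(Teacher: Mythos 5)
Your proposal is correct and follows essentially the same route as the paper's own proof: both start from the Tucker decomposition of Lemma~\ref{factorization_cumulant_tensor} without collapsing to the diagonal, substitute the path expansion~\eqref{entries_I-Lambda}, and then observe that the support of $\mathcal{E}^{(k)}$ prescribed by $(\mathbb{R}^{\otimes k})^{\mathcal{H}}$ coincides exactly with the two-case definition of the top of a $k$-trek in a mixed graph. Your explicit mention of the degenerate case where all tops coincide is in fact slightly more careful than the paper's phrasing, but it is the same argument.
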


\begin{proof}

By Lemma~\ref{factorization_cumulant_tensor}, we can express the $k$-th cumulant tensor of $X$ via the Tucker decomposition $\mathcal{C}^{(k)} = 
\mathcal{E}^{(k)}\bullet(I - \Lambda)^{-k}$. Therefore,
\begin{align*}
\mathcal C^{(k)}_{i_1,\ldots, i_k} &= \sum_{j_1,\ldots, j_k}\mathcal E^{(k)}_{j_1,\ldots, j_k} ((I-\Lambda)^{(-1)})_{j_1,i_1}\cdots ((I-\Lambda)^{(-1)})_{j_k,i_k}\\
&= \sum_{j_1,\ldots, j_k}\mathcal E^{(k)}_{j_1,\ldots, j_k} (\sum_{P_1\in\mathcal P(j_1, i_1)}\lambda^{P_1})\cdots (\sum_{P_k\in\mathcal P(j_k, i_k)}\lambda^{P_1})\\
&= \sum_{{j_1,\ldots, j_k,} \atop {P_1\in\mathcal P(j_1,i_1), \ldots, P_k\in\mathcal P(j_k, i_k)}}\mathcal E^{(k)}_{j_1,\ldots, j_k}\lambda^{P_1}\cdots \lambda^{P_k}.
\end{align*}
Note that $\mathcal E^{(k)}_{j_1,\ldots, j_k}$ is the cumulant of $X_{j_1},\ldots, X_{j_k}$ which is nonzero if and only if there exists a multidirected edge connecting (at least) $j_1,\ldots, j_k$, which is exactly when $j_1,\ldots, j_k$ could be at the top of a $k$-trek. Therefore,
\begin{align*}
   \mathcal C^{(k)}_{i_1,\ldots, i_k} =  \sum_{(P_1,...,P_k) \in \mathcal{T}(i_1,...,i_k)} {\mathcal E^{(k)}_{top(P_{1}),..., top(P_{k})} \lambda^{P_{1}}} ... \ \lambda^{P_{k}}.
\end{align*}
\end{proof}

\noindent
In a similar fashion to the directed acyclic case, we obtain the following result.
\begin{theorem}\label{main_theorem_hidden_variables}
\noindent
Consider a mixed graph $G(V,\mathcal{D},\mathcal{H})$. Let $S_1,..., S_k$ be subsets of $V$ with $\#S_1 = ... = \#S_k$. Then,
$$ \text{det } \mathcal{C}^{(k)}_{S_1,..., S_k} = 0$$ if and only if every system of $k$-treks between $S_1,..., S_k$ has a sided intersection.
\end{theorem}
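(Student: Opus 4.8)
The plan is to keep the architecture of the proof of Theorem~\ref{main_theorem} intact and to replace only the point where diagonality of $\mathcal E^{(k)}$ was used. The anchor is the Cauchy--Binet expansion of Lemma~\ref{det_cumulant},
\begin{equation*}
\text{det }\mathcal{C}^{(k)}_{S_1,\ldots,S_k} = \sum_{\substack{R_1,\ldots,R_k\subseteq V\\ \#R_i=\#S_i}} \text{det }\mathcal{E}^{(k)}_{R_1,\ldots,R_k}\,\text{det}(I-\Lambda)^{-1}_{R_1,S_1}\cdots \text{det}(I-\Lambda)^{-1}_{R_k,S_k},
\end{equation*}
whose derivation (Lemma~\ref{Cauchy-Binet theorem} applied $k$ times to the Tucker product of Lemma~\ref{factorization_cumulant_tensor}) never invoked diagonality and so holds verbatim for mixed graphs. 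The essential change is that we may no longer collapse the sum to the diagonal indices $R_1=\cdots=R_k$; each factor now carries independent information, the $\mathcal E$-determinant about the multidirected edges and the $(I-\Lambda)^{-1}$-determinants about the directed paths.

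First I would record the two combinatorial translations of the factors. By the Gessel--Viennot--Lindstr\"om Lemma~\ref{GVL_lemma}, $\text{det}(I-\Lambda)^{-1}_{R_i,S_i}\not\equiv 0$ exactly when there is a vertex-disjoint system of directed paths from $R_i$ to $S_i$. In parallel, a monomial of the hyperdeterminant $\text{det }\mathcal E^{(k)}_{R_1,\ldots,R_k}$ is a product of $n$ entries of $\mathcal E^{(k)}$; since every admissible entry is an independent free parameter, $\text{det }\mathcal E^{(k)}_{R_1,\ldots,R_k}\not\equiv 0$ precisely when some permutation tuple places, for each of the $n$ rows, a $k$-tuple of tops inside a single multidirected edge. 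Combining a nonzero $\mathcal E$-term with nonzero path systems on all $k$ sides assembles, row by row, into $n$ valid $k$-treks whose side-$i$ paths are vertex-disjoint, i.e.\ into a $k$-trek system between $S_1,\ldots,S_k$ \emph{without} sided intersection; conversely any such trek system yields, for its tuple of top sets $(R_1,\ldots,R_k)$, nonzero values of all $k+1$ factors.

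With this dictionary the reverse direction is immediate: if every $k$-trek system has a sided intersection, then for each index $(R_1,\ldots,R_k)$ at least one factor vanishes identically (no top-matching exists, so $\text{det }\mathcal E^{(k)}_{R_1,\ldots,R_k}\equiv 0$, or some side admits only intersecting path systems, so $\text{det}(I-\Lambda)^{-1}_{R_i,S_i}\equiv 0$), and the whole sum vanishes. The forward direction I argue by contraposition: a trek system without sided intersection singles out an index $(R_1,\ldots,R_k)$ whose summand is a product of two polynomials in the disjoint variable sets $\{\mathcal E^{(k)}\}$ and $\{\lambda\}$, neither identically zero.

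The hard part is the step that was essentially free in the DAG case: showing that this surviving summand is not cancelled by the others. In Theorem~\ref{main_theorem} diagonality forced $R_1=\cdots=R_k=R$ and the monomials $\prod_{r\in R}\mathcal E^{(k)}_{r,\ldots,r}$ were visibly distinct for distinct $R$, which is the content of Lemma~\ref{lemma_det_i_equal_0}; here $\text{det }\mathcal E^{(k)}_{R_1,\ldots,R_k}$ is no longer a single monomial and the same $\mathcal E$-monomial could a priori arise from several indices. I would resolve this by the correct generalization of that ``appears only once'' argument: every monomial of $\text{det }\mathcal E^{(k)}_{R_1,\ldots,R_k}$ uses entries whose $i$-th coordinates are exactly the elements of $R_i$, each once, so the monomial determines both the index tuple $(R_1,\ldots,R_k)$ (as the sets of $i$-th coordinates) and the inducing permutation, and it occurs with coefficient $\pm1$ in that one determinant and in no other (in particular distinct permutations give distinct monomials, so there is no internal cancellation either). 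Grouping $\text{det }\mathcal{C}^{(k)}_{S_1,\ldots,S_k}$ by $\mathcal E$-monomial therefore assigns to each admissible monomial the single coefficient $\pm\prod_i \text{det}(I-\Lambda)^{-1}_{R_i,S_i}$, with no interference between indices. Consequently $\text{det }\mathcal{C}^{(k)}_{S_1,\ldots,S_k}\equiv 0$ if and only if each such product vanishes identically, and feeding this term-wise criterion into the trek-system assembly of the second paragraph yields the stated equivalence exactly as in the DAG case.
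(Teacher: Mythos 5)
Your reduction to the Cauchy--Binet expansion of Lemma~\ref{det_cumulant} is the right starting point, and your ``if'' direction is sound: for each tuple $(R_1,\ldots,R_k)$, either no permutation matching uses only admissible entries (so every monomial of $\det\mathcal{E}^{(k)}_{R_1,\ldots,R_k}$ contains a zero and the hyperdeterminant vanishes), or Lemma~\ref{GVL_lemma} kills one of the path factors. But the forward direction rests on a bookkeeping claim that is false: cumulant tensors are \emph{symmetric}, $\mathcal{E}^{(k)}_{j_1,\ldots,j_k}=\mathcal{E}^{(k)}_{j_{\pi(1)},\ldots,j_{\pi(k)}}$, so the admissible entries are \emph{not} independent free parameters, a monomial does not determine the index tuple $(R_1,\ldots,R_k)$, and distinct permutation tuples can produce identical monomials. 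Concretely, for $k=3$, $n=2$, $R_1=R_2=\{1,2\}$, $R_3=\{3,4\}$, the hyperdeterminant is
\begin{equation*}
\det\mathcal{E}^{(3)}_{R_1,R_2,R_3}
=\mathcal{E}_{113}\mathcal{E}_{224}-\mathcal{E}_{114}\mathcal{E}_{223}
-\mathcal{E}_{123}\mathcal{E}_{214}+\mathcal{E}_{124}\mathcal{E}_{213},
\end{equation*}
and by symmetry the last two terms cancel identically --- so there \emph{is} internal cancellation, refuting ``distinct permutations give distinct monomials.'' Likewise the same $\mathcal{E}$-monomial arises from several index tuples: already for $n=1$, the variable $\mathcal{E}_{122}$ is contributed by $(\{1\},\{2\},\{2\})$, $(\{2\},\{1\},\{2\})$ and $(\{2\},\{2\},\{1\})$, so grouping $\det\mathcal{C}^{(k)}_{S_1,\ldots,S_k}$ by $\mathcal{E}$-monomial assigns to each monomial a \emph{signed sum} of products $\prod_i\det(I-\Lambda)^{-1}_{R_i,S_i}$ over several tuples, not the single product you claim. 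Consequently your contrapositive step is unproven on two counts: the summand singled out by a trek system without sided intersection may itself vanish (nonvanishing of $\det\mathcal{E}^{(k)}_{R_1,\ldots,R_k}$ given an admissible matching is not automatic under symmetry), and even if it does not, it could a priori be cancelled by other tuples. Note that in the DAG case this is exactly what Lemma~\ref{lemma_det_i_equal_0} secures for free, because diagonality collapses the sum to $R_1=\cdots=R_k=R$ and the monomials $\det\mathcal{E}^{(k)}_R$ are visibly distinct; your generalization of that lemma is where the proof breaks.

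The paper avoids this entirely by a different route. It forms the \emph{canonical DAG} $\widetilde G$, replacing each multidirected edge by a fresh source vertex with directed edges to its sinks, and proves (Proposition~\ref{proposition_reduce_directed_case}) that $\mathcal{M}^{(k)}(G)$ and $\mathcal{M}^{(k)}(\widetilde G)$, projected to the original vertices, coincide on a neighborhood of the identity tensor via an explicit change of parameters, hence have the same Zariski closure; since $k$-treks and sided intersections in $G$ correspond to those in $\widetilde G$, Theorem~\ref{main_theorem} for DAGs (where $\mathcal{E}^{(k)}$ \emph{is} diagonal) finishes the proof. To salvage your direct approach you would need a genuine classification of when these symmetric, support-constrained hyperdeterminants vanish, together with a no-cross-cancellation argument for the grouped coefficients --- in effect redoing work that the canonical-DAG reduction delivers in one step. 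I would either carry out that reduction or substantially rework the combinatorial core of your argument.
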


\begin{proof}
Theorem \ref{main_theorem_hidden_variables} extends Theorem \ref{main_theorem} to the case of mixed graphs. We use a common argument in the graphical models literature that enables us to convert the multidirected case to the directed case. We replace every multidirected edge joining $i_1,...,i_k$ with a vertex $v$ and a directed edge between $v$ and each
of the vertices $i_1,...,i_k$. We call this graph $\widetilde{G}$, also known as the {\em canonical DAG} associated to $G$.

\begin{figure}[H]
	\centering
	\begin{subfigure}[b]{0.2\linewidth}
		\includegraphics[width=\linewidth]{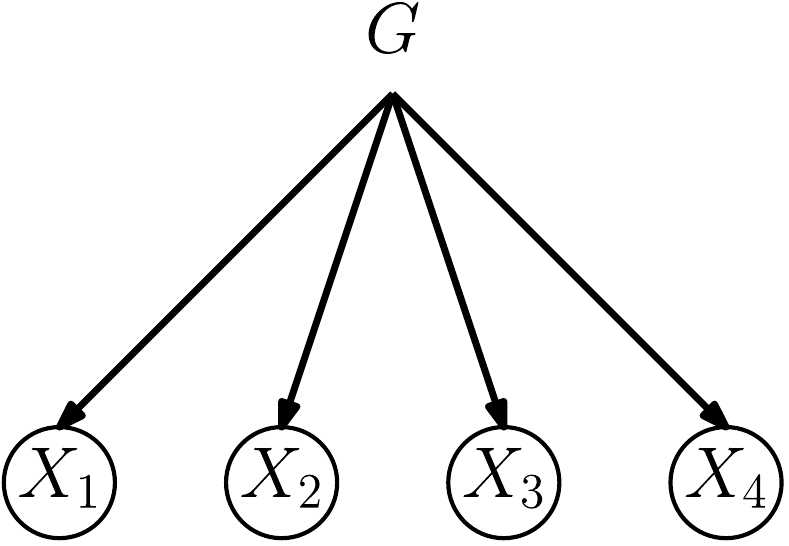}
	\caption{\label{fig1_canonical_DAG}}
	\end{subfigure}\hspace{0.1\textwidth}%
	\begin{subfigure}[b]{0.2\linewidth}
		\includegraphics[width=\linewidth]{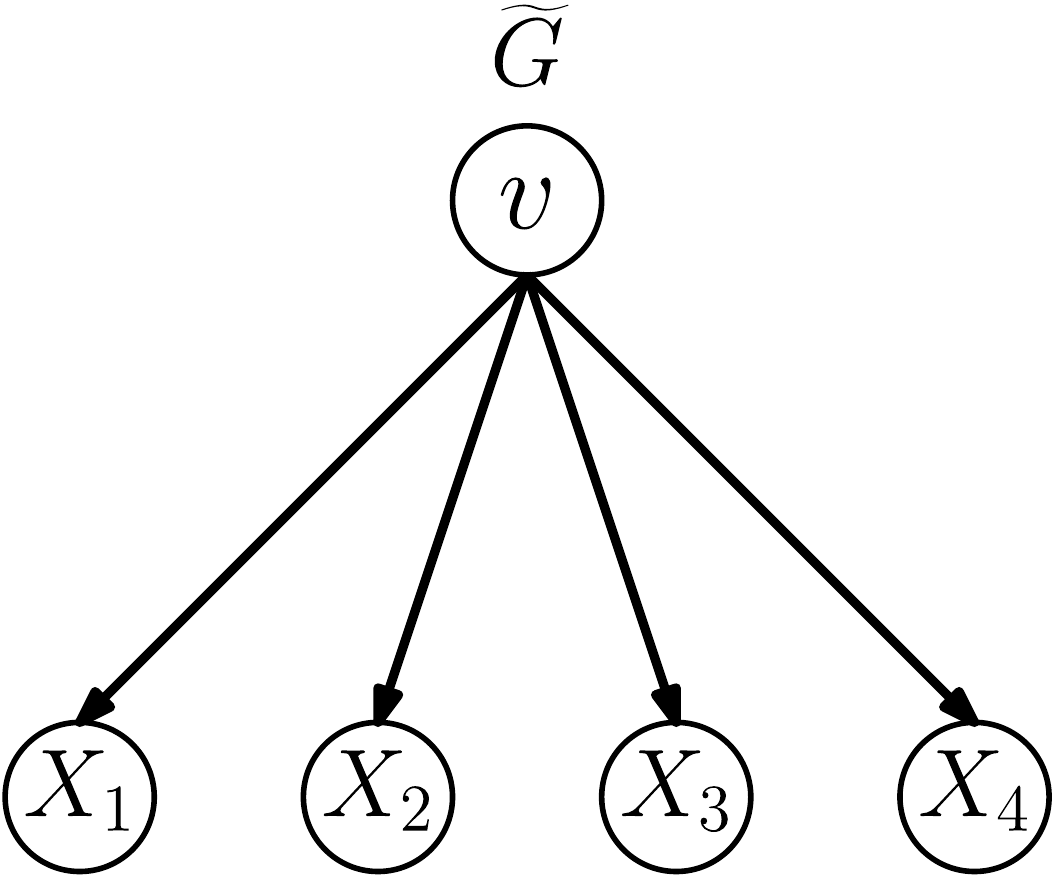}
	\caption{\label{fig2_canonical_DAG}}
	\end{subfigure}\hspace{0.1\textwidth}%
	\caption{\textit{Mixed graph $G$ in (a) and its corresponding canonical DAG $\widetilde{G}$ in (b)}}
\end{figure}

\begin{proposition}\label{proposition_reduce_directed_case}
Let $S_1,...,S_k \subset V$ be $k$ sets of vertices such that $\#S_1=...=\#S_k$. Then the determinant of  $\mathcal{C}^{(k)}_{S_1,...,S_k}$ is zero for all cumulant tensors $\mathcal{C} \in \mathcal{M}^{(k)}(G)$ if and only if the determinant of  $\widetilde{\mathcal{C}}^{(k)}_{S_1,...,S_k}$ is zero for all cumulant tensors $\widetilde{\mathcal{C}} \in \mathcal{M}^{(k)}(\widetilde{G})$. 
In other words, if there is no $k$-trek system without sided intersection between $S_1,...,S_k$ in $G$, then there is no $k$-trek system without sided intersection between $S_1,...,S_k$ in $\widetilde{G}$, and vice-versa.
\end{proposition}

\noindent
\begin{proof}
Let $\mathcal{M}^{(k)}(G)$ and $\mathcal{M}^{(k)}(\widetilde{G})$ be the sets of $k^{th}$-order cumulant tensors of $G$ and  $\widetilde G$, respectively. We will prove that $\mathcal{M}^{(k)}(G)$ and $\mathcal{M}^{(k)}(\widetilde{G})$ have the same Zariski closure, i.e., a polynomial equation vanishes on $\mathcal{M}^{(k)}(G)$ if and only if it vanishes on $\mathcal{M}^{(k)}(\widetilde{G})$.  Note that $\tilde{G}$ has more vertices than $G$, so when comparing the Zariski closures of $\mathcal{M}^{(k)}(G)$ and $\mathcal{M}^{(k)}(\tilde G)$, we implicitly assume that we are only considering cumulants between vertices in $G$, i.e., we are projecting the cumulants onto the vertices of $G$.  To do this, let us show that the two parametrizations give the same family of tensors near the identity tensor. 
Note that there exist distributions of independent variables $X_1, ..., X_p$ for which the $k^{th}$-order cumulant is the identity tensor. As shown in Proposition~\ref{equation_entry_T}, $\mathcal{C}^{(k)}_{i_1,...i_k}$ can be expressed as a sum of the trek monomials of all $k$-treks in $\mathcal{T}(i_1,...,i_k)$ between $i_1,...,i_k$ in $G$ as $\mathcal{C}^{(k)}_{i_1,...,i_k} = \sum\limits_{(P_1,...,P_k) \in \mathcal{T}(i_1,...,i_k)} {\mathcal{E}^{(k)}_{top(P_{1},..., P_{k})} \lambda^{P_{1}}} ... \ \lambda^{P_{k}}$. Similarly, $\widetilde{\mathcal{C}}^{(k)}_{i_1,...,i_k}$ is the sum of the trek monomials of all $k$-treks in $\widetilde{\mathcal{T}}(i_1,...,i_k)$ between $i_1,...,i_k$ in $\widetilde{G}$, i.e., $\widetilde{\mathcal{C}}^{(k)}_{i_1,...,i_k} = \sum\limits_{(\widetilde{P}_1,...,\widetilde{P}_k) \in \widetilde{\mathcal{T}}(i_1,...,i_k)} {\mathcal{E}^{(k)}_{top(\widetilde{P}_{1},..., \widetilde{P}_{k})} \lambda^{\widetilde{P}_{1}}} ... \ \lambda^{\widetilde{P}_{k}}$. 

Now, let's set
\begin{align}
    \mathcal{E}^{(k)}_{i_1,...,i_k} &= \widetilde{\mathcal{E}}^{(k)}_{v,...,v} \  \widetilde{\lambda}_{v,i_1} ... \widetilde{\lambda}_{v,
    i_k}\label{equation_epsilon_1} \intertext{if there is a multi-directed edge between $i_1,\ldots, i_k$ in $G$, i.e., there is a directed edge from the vertex $v$ to each of the nodes $i_l, l \in \{1, \ldots, k\}$ in $\widetilde G$, and let
    }        \mathcal{E}^{(k)}_{i,...,i} &= \widetilde{\mathcal{E}}^{(k)}_{i,...,i} + \sum_{v \in \mathcal H } \widetilde{\mathcal{E}}^{(k)}_{v,...,v} \widetilde{\lambda}^k_{v,i}\label{equation_epsilon_2}
\end{align}
\noindent
for each $i\in [p]$.
As we initially assumed we are near the identity tensor $\mathcal{I}^{(k)}$, we can switch from one parametrization to the other as follows. First, note that $\mathcal{I}^{(k)} \in \mathcal{M}^{(k)}(G)$ and $\mathcal{I}^{(k)} \in \mathcal{M}^{(k)}(\widetilde{G})
$. Then, given $\widetilde{\mathcal{E}}^{(k)}$ and $\widetilde{\lambda}$, we can find $\mathcal{E}^{(k)}$ from equations (\ref{equation_epsilon_1}) and (\ref{equation_epsilon_2}), and $\lambda_{ij} = \widetilde{\lambda}_{ij}$ for $ i \rightarrow j$, i.e., given $\widetilde{\mathcal{C}}^{(k)} \in \mathcal{M}^{(k)}(\widetilde{G})$, using equation (\ref{equation_epsilon_1}) and equation (\ref{equation_epsilon_2}), we get that the corresponding $\mathcal{C}^{(k)} \in \mathcal{M}^{(k)}(G)$, therefore $\mathcal{M}^{(k)}(\widetilde{G}) \subseteq \mathcal{M}^{(k)}(G)$. Conversely, given $\mathcal{E}^{(k)}_{i_1,...,i_k}$ and $\lambda_{ij}$ small enough, we can choose $\widetilde{\mathcal{E}}^{(k)}_{v,...,v} = \varepsilon > 0$, $ \widetilde{\lambda}_{ij}=\lambda_{ij}$ for $i \rightarrow j$ in $\mathcal D$, and $\widetilde{\lambda}_{v,i_l} = \sqrt[k]{\frac{|\mathcal{E}^{(k)}_{i_1,...,i_k}|}{\varepsilon}}$ for $l \in \{1,\ldots,k\}$. Therefore, equation (\ref{equation_epsilon_1}) is satisfied. Since $\mathcal{E}^{(k)}_{i_1,...,i_k}$ is small and $\mathcal{E}^{(k)}_{i,...,i}$ is near 1, we can find  $\widetilde{\mathcal{E}}^{(k)}_{i,...,i} > 0$ such that equation (\ref{equation_epsilon_2}) is satisfied and such that $\widetilde{\mathcal{E}}^{(k)}$ is a diagonal tensor. This shows that if $\mathcal{C}^{(k)} \in \mathcal{M}^{(k)}(G)$ is in a neighborhood of $\mathcal{I}^{(k)}$, then we can find the corresponding $\widetilde{\mathcal{C}}^{(k)} \in \mathcal{M}^{(k)}(\widetilde{G})$ as well. Note that this correspondence between $\mathcal{C}^{(k)}$ and $\widetilde{\mathcal{C}}^{(k)}$ is a bijection. Thus $\mathcal{M}^{(k)}(G)$ and $\mathcal{M}^{(k)}(\widetilde{G})$ are equal in an open neighborhood and so they have the same Zariski closure, i.e., $\overline{\mathcal{M}^{(k)}(G)} = \overline{\mathcal{M}^{(k)}(\widetilde{G})}$, where $\overline{\mathcal{M}^{(k)}(G)}$ denotes the Zariski closure of $\mathcal{M}^{(k)}(G)$. Therefore the determinant of $\widetilde{\mathcal{C}}^{(k)}_{S_1,...,S_k}$ vanishes on $\mathcal{M}^{(k)}(\widetilde G)$  if and only if the determinant of $\mathcal{C}^{(k)}_{S_1,...,S_k}$ vanishes on $\mathcal{M}^{(k)}(G)$. This is equivalent to saying that there is no system of $k$-treks without sided intersection between $S_1,...,S_k$ in $\widetilde{G}$ if and only if there is no system of $k$-treks without sided intersection between $S_1, \ldots, S_k$ in $G$. 
\end{proof}

Going back to the proof of Theorem~\ref{main_theorem_hidden_variables}, Proposition~\ref{proposition_reduce_directed_case} enables us to reduce the multidirected setting to a directed acyclic graph for which we proved Theorem~\ref{main_theorem}. 
\end{proof}

\noindent
\begin{example}\label{example_main_theorem_hidden_variables_statement} The following example shows that Theorem \ref{main_theorem_hidden_variables} enables us to determine whether random variables have a common cause.
\begin{figure}[H]
	\centering
	\begin{subfigure}[b]{0.2\linewidth}
		\includegraphics[scale=0.5]{example1_intro_v2.pdf}
	\caption{\label{fig:fig1a}}
	\end{subfigure}\hspace{0.1\textwidth}%
	\begin{subfigure}[b]{0.2\linewidth}
		\includegraphics[scale=0.48]{example2_intro.pdf}
	\caption{\label{fig:fig1b}}
	\end{subfigure}
	\caption{}
	\label{example_main_theorem_hidden_variables_case}
\end{figure}
\noindent In the two graphs from Figure~\ref{example_main_theorem_hidden_variables_case}, let $A = \{1\}$, $B = \{2\}$, and $C = \{3\}$. In Figure~\ref{fig:fig1a}, there is one $3$-trek joining $A,B$, and $C$, hence det$(\mathcal{C}^{(3)}_{ABC}) = \mathcal{C}^{(3)}_{123}\neq$ 0. In Figure~\ref{fig:fig1b}, there is no $3$-trek joining $A,B$ and $C$ and, a fortiori, no $3$-trek without sided intersection. Therefore det$(\mathcal{C}^{(3)}_{ABC}) = 0$.
\end{example}

\section{Determinants of higher-order moments and multi-trek systems}\label{section_5}

At the start of this project, we focused on tensors of higher-order moments rather than cumulants. However, contrary to cumulant tensors (cf. Lemma \ref{lem:diagonal_cumulants}), moment tensors of order greater than 3 are not diagonal when the variables are independent. Extending Theorem~\ref{main_theorem} to higher-order moment tensors is therefore not straightforward. Nevertheless, we conjecture that the result still holds. Before stating this conjecture, we translate our results obtained for cumulant tensors to moment tensors. 

Let $\Phi^{(k)}$ be the tensor of $k^{th}$-order moments of $\varepsilon$. Then, the $k^{th}$-order moment tensor of $X$ is given as follows.

\begin{proposition}[{\cite[Chapter~5, Eq. (5.7)]{Comon2010}}]\label{expression_tensor}
\noindent
The tensor $\mathcal{N}^{(k)}$ of $k^{th}$-order moments of the random vector $X$ with mean $(0,...,0)$ equals
\begin{align}
\mathcal{N}^{(k)} &= \Phi^{(k)} \bullet (I-\Lambda)^{-k}\label{equation_T_Phi_Lambda}.
\end{align}
\end{proposition}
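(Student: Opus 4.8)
The plan is to mirror the proof of Lemma~\ref{factorization_cumulant_tensor}, exploiting the fact that, exactly like cumulants, ordinary moments are multilinear functionals of the underlying random variables and therefore transform by a Tucker product under the linear change of variables $X = (I-\Lambda)^{-T}\varepsilon$ recorded in~\eqref{equation_system}. Indeed, this is already anticipated by the remark following Lemma~\ref{lem:diagonal_cumulants}, which observes that the factorization identity applies to moments as well; the task here is simply to make that observation precise.

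First I would write out a single entry of the moment tensor, $\mathcal{N}^{(k)}_{i_1,\ldots,i_k} = \mathbb E[X_{i_1}\cdots X_{i_k}]$, and substitute the explicit solution of the structural equations. Abbreviating $M = (I-\Lambda)^{-1}$, equation~\eqref{equation_system} gives $X_{i} = \sum_{j} M_{j,i}\,\varepsilon_{j}$, so each factor $X_{i_\ell}$ becomes a linear combination of the noise variables. Next I would expand the product $\prod_{\ell=1}^{k} X_{i_\ell}$ by multilinearity and pull the deterministic coefficients $M_{j_\ell,i_\ell}$ outside the expectation using linearity of $\mathbb E$:
$$\mathcal{N}^{(k)}_{i_1,\ldots,i_k} = \sum_{j_1,\ldots,j_k} M_{j_1,i_1}\cdots M_{j_k,i_k}\,\mathbb E[\varepsilon_{j_1}\cdots\varepsilon_{j_k}].$$
Recognizing $\mathbb E[\varepsilon_{j_1}\cdots\varepsilon_{j_k}] = \Phi^{(k)}_{j_1,\ldots,j_k}$ as an entry of the moment tensor of $\varepsilon$ and comparing with the coordinate formula for the Tucker product stated in Lemma~\ref{factorization_cumulant_tensor} then closes the argument, yielding $\mathcal{N}^{(k)} = \Phi^{(k)}\bullet(I-\Lambda)^{-k}$.

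There is no substantive obstacle here: the entire content is linearity of expectation together with the invertibility of $I-\Lambda$ that furnishes the closed-form linear solution~\eqref{equation_system}. The only point demanding care is the index bookkeeping — because the structural solution carries the transpose $(I-\Lambda)^{-T}$, the contraction in each mode must be taken against the column index of $M=(I-\Lambda)^{-1}$, precisely matching the convention in the definition of $\bullet$. It is worth emphasizing, in contrast to the cumulant case, that this computation invokes no special combinatorial property of the moment tensor of $\varepsilon$ (it does not require $\Phi^{(k)}$ to be diagonal for independent noise); hence the identity holds verbatim regardless of whether the entries of $\varepsilon$ are independent, and the diagonality complication flagged in Section~\ref{section_5} only surfaces later, when one attempts to read off the vanishing of subdeterminants.
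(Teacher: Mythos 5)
Your proof is correct. The paper gives no argument of its own for Proposition~\ref{expression_tensor} --- it is quoted directly from \cite{Comon2010} --- and your computation (expanding $\mathbb{E}[X_{i_1}\cdots X_{i_k}]$ via $X_i=\sum_j ((I-\Lambda)^{-1})_{j,i}\,\varepsilon_j$ and multilinearity of expectation, then matching the coordinate formula for the Tucker product from Lemma~\ref{factorization_cumulant_tensor}) is exactly the standard verification behind that citation, with the index bookkeeping for the transpose handled correctly and the accurate observation that, unlike Lemma~\ref{lem:diagonal_cumulants}, no diagonality or independence assumption on $\Phi^{(k)}$ is needed for the factorization itself.
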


Let $G = (V, \mathcal{D},\mathcal{H})$ as in Section \ref{section_4}.
In order to account for the non-zero off-diagonal entries in the tensor $\Phi^{(k)}$ of higher-order moments of $\varepsilon$, we need to adapt our definition of a $k$-trek as follows.
\noindent
\begin{definition}\label{def_trek_moment}
\noindent
A $k$-split-trek in $G$ between $k$ nodes $v_1,\ldots, v_k$ is either:
\begin{itemize}
    \item[(a)] an ordered collection of $k$ directed paths $(P_{1}, ..., P_{k})$ where $P_{i}$ has sink $v_i$, and $P_{1},..., P_{k}$ have the same source; or 
    \item[(b)] an ordered collection of $k$ directed paths $(P_{1}, ..., P_{k})$ where $P_{i}$ has sink $v_i$, and $P_{1},..., P_{k}$ may have different sources, but each source must be shared by at least two paths.
\end{itemize}

\begin{figure}[H]
	\centering
	\begin{subfigure}[b]{0.2\linewidth}
		\includegraphics[width=\linewidth]{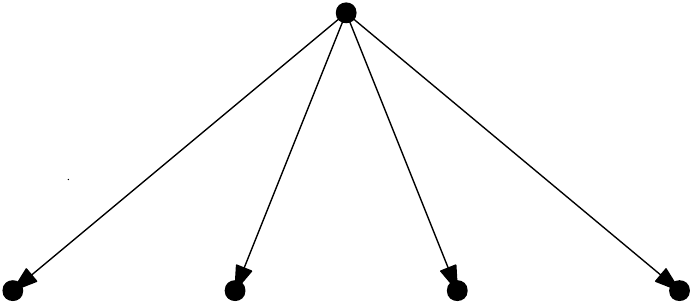}
	\caption{\label{fig:k-trek_moment_1}}
	\end{subfigure}\hspace{0.1\textwidth}%
	\begin{subfigure}[b]{0.2\linewidth}
		\includegraphics[width=\linewidth]{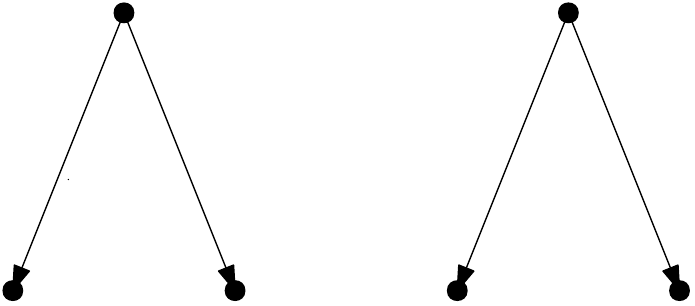}
	\caption{\label{fig:k-trek_moment_2}}
	\end{subfigure}\hspace{0.1\textwidth}%
	\begin{subfigure}[b]{0.2\linewidth}
	\includegraphics[width=\linewidth]{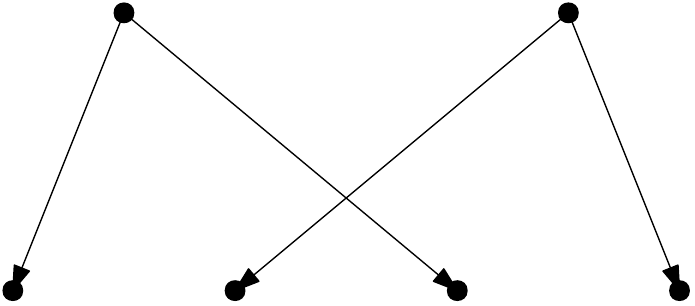}
	\caption{\label{fig:k-trek_moment_3}}
	\end{subfigure}\hspace{0.1\textwidth}%
	\begin{subfigure}[b]{0.2\linewidth}
	\includegraphics[width=\linewidth]{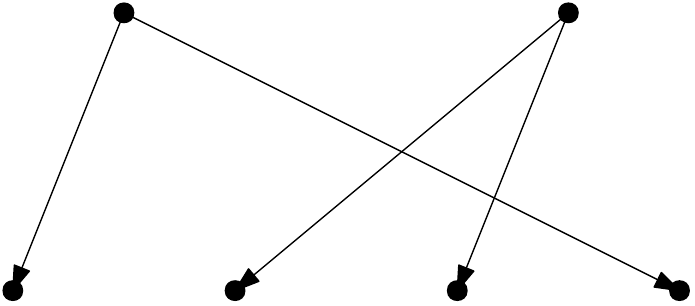}
	\caption{\label{fig:k-trek_moment_4}}
	\end{subfigure}\hspace{0.1\textwidth}%
	\begin{subfigure}[b]{0.2\linewidth}
	\includegraphics[width=\linewidth]{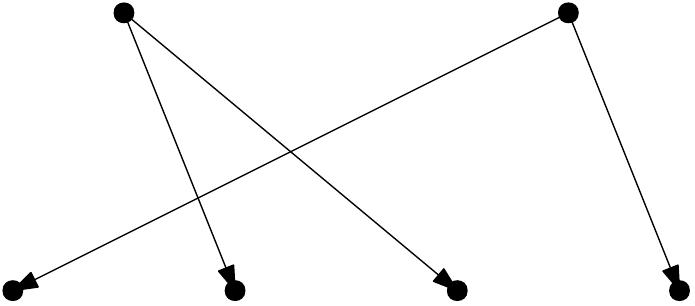}
	\caption{\label{fig:k-trek_moment_5}}
	\end{subfigure}\hspace{0.1\textwidth}%
	\begin{subfigure}[b]{0.2\linewidth}
	\includegraphics[width=\linewidth]{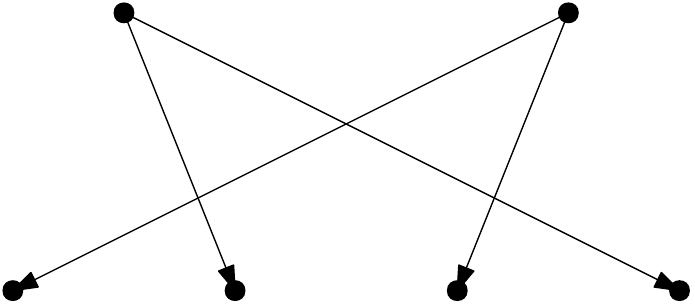}
	\caption{\label{fig:k-trek_moment_6}}
	\end{subfigure}\hspace{0.1\textwidth}%
	\caption{\textit{Types of $k$-split-treks for $k=4$}}\label{fig_types_of_treks}
\end{figure}

\end{definition}
\noindent

\begin{definition}\label{definition_centralized_distributed}
\noindent
The {\em top} of a $k$-split-trek $(P_1,\ldots, P_k)$ in $G$ is either:
\begin{itemize}
    \item[(a)] a node that is the source of each of the $k$ paths $P_1,\ldots, P_k$. 
    \item[(b)] a set of $j$ nodes where each of the $j$ nodes is a source of $\ell_j$ of the paths $P_1,\ldots, P_k$ such that $\sum_j \ell_j = k$ and $\ell_j\geq 2$ for all $j$.
\end{itemize}
\end{definition}
\noindent Note that when $k=3$, the notions of a 3-trek and a 3-split-trek coincide.
\begin{example}In Figure~\ref{fig_types_of_treks},  subfigure (a) illustrates the first type of a top, while subfigures (b-f) illustrate the second type.
\end{example}
\noindent
Using equation (\ref{entries_I-Lambda}), we rewrite the entries of  $(I-\Lambda)^{-1}$ in equation (\ref{equation_T_Phi_Lambda}) and thus express the entries of the tensor of $k^{th}$-order moments as follows.
\noindent
\begin{proposition}\label{equation_entry_T} We have that
\noindent
\begin{equation}
\mathcal{N}^{(k)}_{i_1,...,i_k} = \sum_{(P_1,...,P_k) \in \mathcal{S}(i_1,...,i_k)} {\Phi^{(k)}_{top(P_{1},..., P_{k}),\ldots, top(P_{1},..., P_{k})} \lambda^{P_{1}}} ... \ \lambda^{P_{k}} \label{expression_component_tensor}
\end{equation}
where $\mathcal{S}(i_1,...,i_k)$ is the set of all $k$-split-treks between $i_1,...,i_k$.
\end{proposition}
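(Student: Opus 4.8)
The plan is to reproduce the derivation of the multi-trek rule (Proposition~\ref{prop_equation_entry_C}) while keeping track of the full index tuple on $\Phi^{(k)}$, since---unlike the cumulant tensor $\mathcal E^{(k)}$---the moment tensor $\Phi^{(k)}$ is not diagonal. Starting from the Tucker decomposition of Proposition~\ref{expression_tensor}, I write
\[
\mathcal{N}^{(k)}_{i_1,\ldots, i_k} = \sum_{j_1,\ldots, j_k}\Phi^{(k)}_{j_1,\ldots, j_k}\,((I-\Lambda)^{-1})_{j_1,i_1}\cdots((I-\Lambda)^{-1})_{j_k,i_k},
\]
then substitute the path expansion~\eqref{entries_I-Lambda}, namely $((I-\Lambda)^{-1})_{j_s,i_s} = \sum_{P_s\in\mathcal P(j_s,i_s)}\lambda^{P_s}$, into each of the $k$ factors. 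Distributing the product rewrites the right-hand side as a single sum over tuples of sources $(j_1,\ldots, j_k)$ and tuples of directed paths $(P_1,\ldots, P_k)$, with $P_s$ running from $j_s$ to $i_s$, whose summand is $\Phi^{(k)}_{j_1,\ldots, j_k}\,\lambda^{P_1}\cdots\lambda^{P_k}$.

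The crux is to determine which source tuples contribute. Because the entries of $\varepsilon$ are centered, the moment $\Phi^{(k)}_{j_1,\ldots, j_k} = \mathbb E[\varepsilon_{j_1}\cdots\varepsilon_{j_k}]$ factors across the distinct values among $j_1,\ldots, j_k$, so any value occurring exactly once contributes a factor $\mathbb E[\varepsilon_j] = 0$ and kills the whole entry. Thus $\Phi^{(k)}_{j_1,\ldots, j_k}\neq 0$ only when every distinct source is repeated at least twice (or all sources coincide), which is exactly the condition that $(j_1,\ldots, j_k)$ be the top of a $k$-split-trek: case~(a) of Definitions~\ref{def_trek_moment} and~\ref{definition_centralized_distributed} is the all-equal pattern, and case~(b) is the partition into groups of size at least two. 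Discarding the identically-vanishing terms therefore restricts the sum to exactly those tuples $(j_1,\ldots, j_k)$ that arise as tops.

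It then remains to observe that the surviving data---a valid top $(j_1,\ldots, j_k)$ together with paths $P_s\in\mathcal P(j_s, i_s)$---is precisely a $k$-split-trek $(P_1,\ldots, P_k)\in\mathcal S(i_1,\ldots, i_k)$ with $top(P_1,\ldots, P_k) = (j_1,\ldots, j_k)$, and that its summand coincides with the split-trek monomial in~\eqref{expression_component_tensor}; reindexing the sum over $\mathcal S(i_1,\ldots, i_k)$ gives the claim. I expect the main obstacle to be the bookkeeping in this identification: verifying carefully that the nonzero pattern of $\Phi^{(k)}$ is in bijection with split-trek tops, and that the moment entry is correctly read off the multiset of sources (as opposed to the diagonal cumulant case, where only the all-equal pattern survives). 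In the hidden-variable setting the same expansion applies verbatim, with the additional nonzero entries of $\Phi^{(k)}$ coming from multidirected edges among the sources; one must confirm that these extra admissible source patterns are likewise captured by the split-trek top of Definition~\ref{definition_centralized_distributed}.
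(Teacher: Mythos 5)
Your proposal is correct and follows essentially the same route as the paper's own proof: expand the Tucker product $\mathcal N^{(k)}=\Phi^{(k)}\bullet(I-\Lambda)^{-k}$, substitute the path expansion~\eqref{entries_I-Lambda}, and observe that centeredness and independence of the $\varepsilon_i$ force $\Phi^{(k)}_{j_1,\ldots,j_k}$ to vanish whenever some source index appears exactly once, so the surviving source patterns are exactly the $k$-split-trek tops of Definitions~\ref{def_trek_moment} and~\ref{definition_centralized_distributed}. If anything, you are more careful than the paper, which asserts the nonvanishing pattern of $\Phi^{(k)}$ as an \emph{if and only if} (false for particular distributions, e.g.\ vanishing odd moments), whereas you correctly use only the direction that singleton-source terms are identically zero.
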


\begin{proof}
\noindent
From equation (\ref{equation_T_Phi_Lambda}), we have that $\mathcal{N}^{(k)} = \Phi^{(k)} \bullet (I-\Lambda)^{-k}$.
The entries of $\Phi^{(k)}$ are non-zero if and only if they are of the form $\mathbb{E}[\varepsilon_1^{x_1} ... \varepsilon_k^{x_k}]$ where $x_1, ..., x_k$ are integers either equal to 0, or strictly greater than 1. Such entries correspond to the cases when there is a $k$-split-trek between $i_1,..., i_k$ as defined in Definition \ref{def_trek_moment}. Furthermore, we have $(I-\Lambda)^{-1}_{ij} = \sum_{P \in \mathcal{P}(ij)} \lambda^P$ by equation (\ref{entries_I-Lambda}), and replacing this expression in equation (\ref{equation_T_Phi_Lambda}), we obtain equation (\ref{expression_component_tensor}), which completes the proof. \end{proof}

Similarly to the case of higher-order cumulants, the determinant of the subtensor of $k^{th}$-order moments indexed by the sets $S_1,...,S_k$ can be rewritten in terms of the split-trek systems between $S_1,...,S_k$.
\noindent
\begin{proposition}\label{det_T_trek_system_m_moments}
\noindent
Let $S_1,\ldots, S_k\subseteq V$ be $k$ sets of nodes such that $\#S_1 =...=\#S_k = n$. Then,
$$\text{det } \mathcal N^{(k)}_{S_1,\ldots, S_k} = \sum_{T\in\mathcal S(S_1,\ldots, S_k)} \text{sign(T) } m_{T},$$
where $m_T$ is the split-trek-system monomial of the split-trek system $T=\{(P^{(1)}_1,\ldots, P^{(1)}_k), \ldots, (P^{(n)}_1,\ldots, P^{(n)}_k)\}$, defined as 
$$m_T = \prod_{i=1}^n \Phi^{(k)}_{top((P^{(i)}_1,\ldots, P^{(i)}_k)),\ldots, top((P^{(i)}_1,\ldots, P^{(i)}_k))}\prod_{j=1}^k\lambda^{P^{(i)}_j}.$$
Furthermore, the sum can be taken over the set $\widetilde{\mathcal{S}}(S_1,\ldots, S_k)$ of $k$-split-trek-systems without sided intersection.
\end{proposition}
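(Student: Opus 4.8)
The plan is to follow the proof of Proposition~\ref{det_T_trek_system_m} almost verbatim, replacing $\mathcal E^{(k)}$ by $\Phi^{(k)}$, $\mathcal C^{(k)}$ by $\mathcal N^{(k)}$, and treks by split-treks throughout. First I would establish the first displayed identity by expanding the hyperdeterminant through its Leibniz formula (Definition~\ref{definition_determinant_tensor}) and substituting the entrywise expression from Proposition~\ref{equation_entry_T}. Each entry $\mathcal N^{(k)}_{s,s_{\sigma_2},\ldots,s_{\sigma_k}}$ becomes a sum of split-trek monomials; multiplying over the $n$ diagonal positions and summing over the permutations $\sigma_2,\ldots,\sigma_k$ regroups the terms exactly into a sum over split-trek systems $T\in\mathcal S(S_1,\ldots,S_k)$, with $\mathrm{sign}(T)=\mathrm{sign}(\sigma_2)\cdots\mathrm{sign}(\sigma_k)$ and $m_T=\prod_x m_{T_x}$. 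This is the direct analogue of the chain of equalities in~\eqref{equations_sum_trek_systems}, and the only change is that split-trek tops of type (b) from Definition~\ref{definition_centralized_distributed} now arise because $\Phi^{(k)}$ has nonzero off-diagonal entries.

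For the restriction to systems without sided intersection, I would apply the tensor Cauchy--Binet theorem (Lemma~\ref{Cauchy-Binet theorem}) $k$ times to the Tucker factorization $\mathcal N^{(k)}=\Phi^{(k)}\bullet(I-\Lambda)^{-k}$ of Proposition~\ref{expression_tensor}, obtaining
\[
\det \mathcal N^{(k)}_{S_1,\ldots,S_k}=\sum_{R_1,\ldots,R_k}\det\Phi^{(k)}_{R_1,\ldots,R_k}\prod_{i=1}^k\det(I-\Lambda)^{-1}_{R_i,S_i}.
\]
Unlike the cumulant case, where diagonality of $\mathcal E^{(k)}$ collapses the sum to a single index set $R_1=\cdots=R_k$, here the sum genuinely ranges over tuples $(R_1,\ldots,R_k)$. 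I would then invoke the Gessel--Viennot--Lindström lemma (Lemma~\ref{GVL_lemma}) on each factor $\det(I-\Lambda)^{-1}_{R_i,S_i}$, rewriting it as a signed sum over \emph{non-intersecting} path systems from $R_i$ to $S_i$. Since every side-$i$ path family is individually non-intersecting, each surviving term assembles into a split-trek system with no sided intersection, the grouping of the sources in $R_1,\ldots,R_k$ into tops being dictated by the permutation expansion of $\det\Phi^{(k)}_{R_1,\ldots,R_k}$.

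The main obstacle is the bookkeeping that identifies the surviving terms with elements of $\widetilde{\mathcal S}(S_1,\ldots,S_k)$. Concretely, I must verify that the nonzero contributions to $\det\Phi^{(k)}_{R_1,\ldots,R_k}$ correspond precisely to admissible split-trek tops: a product of $\Phi^{(k)}$-entries is nonzero exactly when each factor is a valid moment $\mathbb E[\varepsilon_{a_1}^{x_1}\cdots\varepsilon_{a_m}^{x_m}]$ with every exponent equal to $0$ or at least $2$, which by Definition~\ref{def_trek_moment} and Definition~\ref{definition_centralized_distributed} is exactly the condition that the selected sources form a top of type (a) or (b). Matching the signs generated by the $\Phi$-hyperdeterminant together with the $(k-1)$ Gessel--Viennot--Lindström permutations against $\mathrm{sign}(T)$ from the Leibniz expansion is the delicate point; once this sign consistency is checked, the two expansions agree term by term, the terms with sided intersection cancel out of the Leibniz sum, and the sum over $\mathcal S(S_1,\ldots,S_k)$ reduces to the sum over $\widetilde{\mathcal S}(S_1,\ldots,S_k)$, completing the proof.
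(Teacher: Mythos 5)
Your proposal follows essentially the same route as the paper's proof in Appendix~\ref{Appendix D}: the first identity comes from the Leibniz expansion of the hyperdeterminant combined with the split-trek rule (Proposition~\ref{equation_entry_T}), and the restriction to $\widetilde{\mathcal S}(S_1,\ldots,S_k)$ comes from applying the tensor Cauchy--Binet lemma (Lemma~\ref{Cauchy-Binet theorem}) to the Tucker factorization and then the Gessel--Viennot--Lindstr\"om lemma (Lemma~\ref{GVL_lemma}) to each factor $\det(I-\Lambda)^{-1}_{R_i,S_i}$. If anything, you make explicit more of the bookkeeping than the paper does---the genuinely varying tuples $(R_1,\ldots,R_k)$ due to the non-diagonality of $\Phi^{(k)}$, the identification of nonzero terms of $\det\Phi^{(k)}_{R_1,\ldots,R_k}$ with admissible split-trek tops, and the sign consistency---where the paper simply states that this step proceeds ``similarly to the proof of Proposition~\ref{det_T_trek_system_m}.''
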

The proof of Proposition \ref{det_T_trek_system_m_moments} can be found in Appendix \ref{Appendix D}.
We now prove an analog of Theorem~\ref{main_theorem} for the tensors of $3^{rd}$-order moments $(k=3)$.
\begin{theorem}\label{theorem_third-order_moments}
Let $S_1, S_2, S_3$ be subsets of $V$ with $\#S_1 = \#S_2 = \#S_3$. Then,
$$ \text{det} \ \mathcal{N}^{(3)}_{S_1, S_2, S_3} = 0$$ if and only if every system of $3$-split-treks between $S_1, S_2, S_3$ has a sided intersection.
\end{theorem}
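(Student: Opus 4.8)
The plan is to reduce the statement to the cumulant criterion already proved, by exploiting the elementary fact that third-order moments and third-order cumulants agree for centered random vectors. First I would record that, since the offsets $\lambda_{0i}$ have been discarded and the noise $\varepsilon$ is centered, the solution $X=(I-\Lambda)^{-T}\varepsilon$ is itself centered (indeed Proposition~\ref{expression_tensor} already assumes $X$ has mean zero), and $\varepsilon$ is centered by hypothesis. From the formula displayed in Section~\ref{section_2}, one has $\text{cum}(Z_{i_1},Z_{i_2},Z_{i_3})=\mathbb{E}[Z_{i_1}Z_{i_2}Z_{i_3}]$ whenever the $Z_i$ have mean zero. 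Applying this entrywise to $X$ and to $\varepsilon$ yields the two tensor identities $\mathcal{N}^{(3)}=\mathcal{C}^{(3)}$ and $\Phi^{(3)}=\mathcal{E}^{(3)}$.

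Next I would observe that these identities make the order-three moment and cumulant parametrizations coincide verbatim. Proposition~\ref{equation_entry_T} expresses each entry of $\mathcal{N}^{(3)}$ as a sum of split-trek monomials weighted by entries of $\Phi^{(3)}$, while the multi-trek rule expresses each entry of $\mathcal{C}^{(3)}$ as a sum of trek monomials weighted by entries of $\mathcal{E}^{(3)}$; since $\Phi^{(3)}=\mathcal{E}^{(3)}$ and, as noted immediately after Definition~\ref{definition_centralized_distributed}, a $3$-split-trek is exactly a $3$-trek, these two sums agree term by term. Hence the families of tensors parametrized as moments and as cumulants are the same set, and the determinant expansion of Proposition~\ref{det_T_trek_system_m_moments} reduces to that of Proposition~\ref{det_T_trek_system_m}. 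In particular $\det \mathcal{N}^{(3)}_{S_1,S_2,S_3}$ is identically the same polynomial in the model parameters as $\det \mathcal{C}^{(3)}_{S_1,S_2,S_3}$.

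With this identification the theorem is immediate. Because $G=(V,\mathcal{D},\mathcal{H})$ is a mixed graph, I would invoke Theorem~\ref{main_theorem_hidden_variables} at $k=3$, which states that $\det \mathcal{C}^{(3)}_{S_1,S_2,S_3}=0$ for all consistent cumulant tensors if and only if every system of $3$-treks between $S_1,S_2,S_3$ has a sided intersection. Translating $\mathcal{C}^{(3)}$ into $\mathcal{N}^{(3)}$ and "$3$-trek" into "$3$-split-trek" through the coincidences established above gives exactly the claimed equivalence.

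The reason there is essentially no obstacle at order three — in sharp contrast with the general $k\geq 4$ conjecture motivating this section — is precisely that the third moment of a centered vector equals its third cumulant, so the off-diagonal entries of $\Phi^{(3)}$ that would otherwise break the diagonality argument either vanish (in the DAG case) or are supported exactly on the multidirected structure, just as for $\mathcal{E}^{(3)}$ (in the mixed case). I expect the only point requiring a word of care is verifying that the parameter sets match, i.e.\ that $\Phi^{(3)}$ and $\mathcal{E}^{(3)}$ range over the same tensors; this holds because at order $k\geq 3$ the parametrization places no positivity constraint on these tensors, only the support constraints, which are identical. For $k\geq 4$ the moment--cumulant coincidence fails, $\Phi^{(k)}$ is genuinely non-diagonal even under independence, and the split-trek bookkeeping no longer collapses onto the cumulant computation — which is where the real difficulty, and the need for the conjecture, arises.
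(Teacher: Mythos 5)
Your proposal is correct, and it reaches the conclusion by a cleaner route than the paper's own proof. The paper argues by direct inspection: it observes that $\Phi^{(3)}$ is diagonal (a mixed third moment $\mathbb{E}[\varepsilon_{i_1}\varepsilon_{i_2}\varepsilon_{i_3}]$ of independent centered noise vanishes unless all indices agree), notes that $3$-split-treks coincide with $3$-treks, and then re-runs the entire argument of Theorem~\ref{main_theorem} --- the tensor Cauchy--Binet expansion \eqref{equation_T_after_Cauchy_Binet_case_moment} plus the Gessel--Viennot--Lindstr\"om Lemma~\ref{GVL_lemma} --- with $\Phi^{(3)}$ in place of $\mathcal{E}^{(3)}$. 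You instead use the moment--cumulant identity at order three, $\mathcal{N}^{(3)}=\mathcal{C}^{(3)}$ and $\Phi^{(3)}=\mathcal{E}^{(3)}$ for centered vectors, so the order-three moment model is \emph{literally} the set $\mathcal{M}^{(3)}(G)$ and the theorem becomes the $k=3$ instance of the cumulant theorem cited as a black box, with no re-derivation. This buys two things: it explains \emph{why} $\Phi^{(3)}$ is diagonal (it equals $\mathcal{E}^{(3)}$, which is diagonal by Lemma~\ref{lem:diagonal_cumulants}) rather than verifying it by hand, and it extends cleanly to the mixed-graph setting via Theorem~\ref{main_theorem_hidden_variables} --- a point the paper's proof quietly sidesteps, since diagonality of $\Phi^{(3)}$ presumes independent noise even though Section~\ref{section_5} declares $G=(V,\mathcal{D},\mathcal{H})$ mixed. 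One small caveat in that mixed-graph reading: Definition~\ref{def_trek_moment} as literally stated does not allow split-trek tops joined by a multidirected edge, so your translation of ``$3$-trek'' into ``$3$-split-trek'' requires interpreting split-treks with the mixed-graph top convention, consistent with the paper's remark that the two notions coincide at $k=3$; and your observation that the parameter sets match (any tensor with the right support arises as a third-moment tensor, there being no positivity constraint at odd order) is exactly the right point to flag, though for the polynomial-identity reading of $\det \mathcal{N}^{(3)}_{S_1,S_2,S_3}=0$ used throughout the paper, the formal identification of parametrizations already suffices.
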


\begin{proof}
In order to prove Theorem~\ref{theorem_third-order_moments}, notice that in equation (\ref{equation_T_after_Cauchy_Binet_case_moment}), $\Phi^{(k)}$ is diagonal for $k=3$. Furthermore, a 3-trek and a 3-split-trek are the same (and similarly for a 3-trek system and a 3-split-trek-system). Then the proof of Theorem~\ref{theorem_third-order_moments} follows the same reasoning as that of Theorem~\ref{main_theorem}.
\end{proof}

Notice that for $k>3$, $\Phi^{(k)}$ is not diagonal, and for that reason, we cannot easily extend  Theorem~\ref{theorem_third-order_moments} to higher-order moments. 
For higher-order moments $(k>3)$, we conjecture the following theorem by analogy with Theorem \ref{main_theorem}.
\begin{conjecture} \label{main_theorem_moments}
\noindent
Let $S_1,..., S_k$ be subsets of $V$ with $\#S_1 = ... = \#S_k$. Then,
$$ \text{det} \ \mathcal{N}^{(k)}_{S_1,..., S_k} = 0$$ if and only if every system of $k$-split-treks between $S_1,..., S_k$ has a sided intersection.
\end{conjecture}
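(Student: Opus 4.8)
The plan is to keep the two-sided structure of the proof of Theorem~\ref{main_theorem} and to isolate exactly where the non-diagonality of $\Phi^{(k)}$ forces genuinely new work. One direction is immediate: by Proposition~\ref{det_T_trek_system_m_moments} we may write
$$\det \mathcal N^{(k)}_{S_1,\ldots,S_k} = \sum_{T\in\widetilde{\mathcal S}(S_1,\ldots,S_k)}\operatorname{sign}(T)\,m_T,$$
so if every $k$-split-trek system between $S_1,\ldots,S_k$ has a sided intersection, then $\widetilde{\mathcal S}(S_1,\ldots,S_k)=\varnothing$, the sum is empty, and $\det \mathcal N^{(k)}_{S_1,\ldots,S_k}=0$. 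Hence the entire content of the conjecture lies in the converse, which I would phrase contrapositively: \emph{if there exists at least one $k$-split-trek system without sided intersection, then $\det \mathcal N^{(k)}_{S_1,\ldots,S_k}\not\equiv 0$ as a polynomial in the $\lambda_{uv}$ and the noise moments.} Equivalently, the signed monomials $\operatorname{sign}(T)\,m_T$ in the display above must not completely cancel.

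To attack non-cancellation I would re-derive the determinant through the Cauchy--Binet picture, applying Lemma~\ref{Cauchy-Binet theorem} $k$ times to $\mathcal N^{(k)}=\Phi^{(k)}\bullet(I-\Lambda)^{-k}$ to obtain
$$\det \mathcal N^{(k)}_{S_1,\ldots,S_k}=\sum_{R_1,\ldots,R_k}\det\Phi^{(k)}_{R_1,\ldots,R_k}\;\prod_{i=1}^{k}\det(I-\Lambda)^{-1}_{R_i,S_i},$$
and then expanding each matrix determinant by the Gessel--Viennot--Lindström Lemma~\ref{GVL_lemma}. The new ingredient, replacing the step ``$\det\mathcal E^{(k)}_{R_1,\ldots,R_k}=0$ unless $R_1=\cdots=R_k$'' of the cumulant proof, is a structural description of $\det\Phi^{(k)}_{R_1,\ldots,R_k}$: since $\Phi^{(k)}_{a_1,\ldots,a_k}=\prod_{v}\mathbb E[\varepsilon_v^{\,m_v}]$ is nonzero exactly when every index value occurring among $a_1,\ldots,a_k$ occurs with multiplicity at least $2$, the nonzero monomials of $\det\Phi^{(k)}_{R_1,\ldots,R_k}$ are indexed precisely by the admissible split tops of Definition~\ref{definition_centralized_distributed}. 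I would record this as a moment analogue of Lemma~\ref{lemma_det_i_equal_0}, noting that when $k=3$ the only partition of $3$ into parts of size at least $2$ is $\{3\}$, which forces $R_1=R_2=R_3$ and recovers exactly the diagonal argument behind Theorem~\ref{theorem_third-order_moments}.

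The decisive step is then to rule out cancellation among the surviving monomials. My plan is a leading-term argument: fix a monomial order on the $\lambda_{uv}$ and the noise moments designed so that a maximal monomial determines, through the GVL expansion, the underlying nonintersecting path systems $N(R_i,S_i)$, and, through the factorization of the corresponding $\Phi^{(k)}$ entries, the block structure of the top; a natural first attempt weights by total $\lambda$-degree and breaks ties by preferring split tops whose blocks are as large as possible, so that a high moment $\mathbb E[\varepsilon_v^{\,m}]$ outranks any product of lower moments of the same $\varepsilon_v$. Starting from a split-trek system $T^\ast\in\widetilde{\mathcal S}(S_1,\ldots,S_k)$, the goal is to show that $m_{T^\ast}$ can be chosen so that it is the \emph{unique} maximal monomial, hence appears with a single, nonzero coefficient and cannot be annihilated.

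The hard part is precisely this uniqueness. Because $\Phi^{(k)}$ is not diagonal for $k\ge 4$, the assignment of an admissible split structure to its noise-moment monomial is no longer injective, so two genuinely different split-trek systems without sided intersection can carry identical noise-moment factors and the same $\lambda$-monomial while inducing permutations of opposite sign; the diagonal structure that made the monomials $\prod_{r\in R}\mathcal E^{(k)}_{r,\ldots,r}$ pairwise distinct across $R$ in Theorem~\ref{main_theorem} is simply unavailable. Thus the crux is to design the order (or, as a fallback, an explicit generic substitution of the moment and edge parameters, perhaps in the spirit of the canonical-DAG weighting of Proposition~\ref{proposition_reduce_directed_case}) finely enough that the argmax is always realized by a single split-trek system with a single sign. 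I expect that controlling these collisions coming from blocks of unequal sizes---rather than the routine Cauchy--Binet and GVL bookkeeping---is where the real difficulty of the conjecture resides.
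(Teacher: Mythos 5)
You are attempting to prove a statement that the paper itself presents only as Conjecture~\ref{main_theorem_moments}: the paper offers no proof of the full biconditional, proves exactly the direction you prove (the paper's remark after the conjecture notes the \emph{if} direction is immediate from Proposition~\ref{det_T_trek_system_m_moments}, just as in your first paragraph), and establishes the converse only in the special case $k=3$ (Theorem~\ref{theorem_third-order_moments}), where $\Phi^{(3)}$ is diagonal so the argument of Theorem~\ref{main_theorem} transfers verbatim. Your observation that for $k=3$ the admissible split structures force $R_1=R_2=R_3$ reproduces exactly the paper's reasoning for that case, and your structural description of the nonzero entries of $\Phi^{(k)}$ matches Proposition~\ref{equation_entry_T}. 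Up to that point your proposal is correct and aligned with what the paper actually establishes.

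The genuine gap is that your ``decisive step'' is a plan, not an argument, and you concede as much: the monomial order (or generic substitution) that would make $m_{T^\ast}$ the \emph{unique} maximal monomial is never constructed, and no proof is given that such an order exists. This is not a routine omission. As you yourself note, for $k\geq 4$ two distinct $k$-split-trek systems without sided intersection can carry identical noise-moment factors and identical $\lambda$-monomials while inducing permutations of opposite sign, so the surviving terms of Proposition~\ref{det_T_trek_system_m_moments} are not pairwise distinct monomials and cancellation cannot be excluded term-by-term. Equivalently, the moment analogue of Lemma~\ref{lemma_det_i_equal_0} that your Cauchy--Binet route would need fails: since $\det\Phi^{(k)}_{R_1,\ldots,R_k}$ no longer vanishes unless $R_1=\cdots=R_k$, the monomials $\det\Phi^{(k)}_{R_1,\ldots,R_k}$ attached to different index tuples are neither isolated nor independent, and cross terms in the sum over $(R_1,\ldots,R_k)$ can interfere after multiplication by the Gessel--Viennot--Lindstr\"om path polynomials. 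What you have, therefore, is a correct proof of one implication together with an honest research programme for the other; as a proof of the conjecture it is incomplete, which is consistent with the statement being open in the paper.
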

Note that the \textit{if} direction is straightforward since we can express the determinant as a sum of split-trek monomials of $k$-split-trek systems without sided intersection, as in Proposition \ref{det_T_trek_system_m_moments}.
\noindent
Provided Conjecture~\ref{main_theorem_moments} is true, we can show the following relationship between  moment tensors of different orders.
\begin{proposition}\label{lower_order_moments}
\noindent
Consider $k\geq 4$ sets of vertices $S_1,...,S_k\subseteq V$ such that $\#S_1 = ... = \#S_k = n$. Let us suppose that the tensor of $k$-th order moments $\mathcal{N}^{(k)}_{S_1,\ldots, S_k}$ indexed by $S_1,...,S_k$ has null determinant: $\text{det }\mathcal{N}^{(k)}_{S_1,...,S_k} = 0$. Then, for any $2\leq h\leq k-2$ and any partition $\{1,\ldots, k\} = \{i_1,\ldots, i_h\}\cup \{j_1,\ldots, j_{k-h}\}$, either the determinant of the $h$-th order moment tensor $\mathcal{N}^{(h)}_{S_{i_1},\ldots, S_{i_h}}$ is zero, or the determinant of the $(k-h)$-th order moment tensor $\mathcal{N}^{(k-h)}_{S_{j_1},\ldots, S_{j_{k-h}}}$ is zero.
\end{proposition}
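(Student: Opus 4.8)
The plan is to pass from moment tensors to split-trek systems using Conjecture~\ref{main_theorem_moments}, after which the statement becomes a purely combinatorial assertion about gluing split-trek systems together. I would argue by contraposition, proving the equivalent implication: if $\det \mathcal{N}^{(h)}_{S_{i_1},\ldots,S_{i_h}} \neq 0$ \emph{and} $\det \mathcal{N}^{(k-h)}_{S_{j_1},\ldots,S_{j_{k-h}}} \neq 0$, then $\det \mathcal{N}^{(k)}_{S_1,\ldots,S_k} \neq 0$.

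First I would apply Conjecture~\ref{main_theorem_moments} at orders $h$ and $k-h$. The non-vanishing of the two lower-order determinants produces an $h$-split-trek system $T_1$ between $S_{i_1},\ldots,S_{i_h}$ having no sided intersection and a $(k-h)$-split-trek system $T_2$ between $S_{j_1},\ldots,S_{j_{k-h}}$ having no sided intersection; each consists of exactly $n$ split-treks. It is precisely here that the hypothesis $2\le h\le k-2$ is used: it guarantees $h\ge 2$ and $k-h\ge 2$, so that both $T_1$ and $T_2$ are genuine split-trek systems in the sense of Definition~\ref{def_trek_moment}.

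The core construction is to glue $T_1$ and $T_2$ into a single $k$-split-trek system $T$ between $S_1,\ldots,S_k$ with no sided intersection. I would fix any bijection between the $n$ split-treks of $T_1$ and the $n$ split-treks of $T_2$; for each matched pair, I place the $h$ paths of the $T_1$-trek at the side-positions $i_1,\ldots,i_h$ and the $k-h$ paths of the $T_2$-trek at the positions $j_1,\ldots,j_{k-h}$, and reorder by side index to obtain an ordered $k$-tuple of directed paths. I would then check this tuple is a valid $k$-split-trek: every source arising in the $T_1$-part is shared by at least two of those paths, and likewise for the $T_2$-part, so every source of the glued tuple is shared by at least two paths, which is exactly Definition~\ref{def_trek_moment}(b) (degenerating to~(a) when all sources coincide). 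Taking these $n$ glued treks as $T$, its ends on each side $S_{i_m}$ are exactly the ends of $T_1$ there, and its ends on each $S_{j_m}$ are exactly those of $T_2$, so the ends of $T$ are $S_1,\ldots,S_k$.

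Finally I would verify that $T$ has no sided intersection: the $n$ paths of $T$ terminating at a side indexed by some $i_m$ all come from $T_1$ and are therefore pairwise vertex-disjoint, and symmetrically for sides indexed by the $j_m$; since $\{i_1,\ldots,i_h\}$ and $\{j_1,\ldots,j_{k-h}\}$ partition $\{1,\ldots,k\}$, no side receives paths from both systems and no new sided intersection can appear. By Conjecture~\ref{main_theorem_moments} at order $k$, the existence of $T$ forces $\det \mathcal{N}^{(k)}_{S_1,\ldots,S_k}\neq 0$, which closes the contrapositive. The one step requiring genuine care — and the natural place for the argument to go wrong — is the verification of the top/source condition of Definition~\ref{def_trek_moment} for the glued trek; everything else is bookkeeping of side-indices together with the disjointness observation, both of which are immediate once the gluing is set up. I would also note that only the \emph{hard} (conjectured) direction at order $k$ truly depends on Conjecture~\ref{main_theorem_moments}: the lower-order inputs use only the elementary direction already established in Proposition~\ref{det_T_trek_system_m_moments}.
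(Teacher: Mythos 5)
Your proposal is correct and takes essentially the same route as the paper's proof: argue by contraposition, use Conjecture~\ref{main_theorem_moments} to extract an $h$-split-trek system $T_1$ and a $(k-h)$-split-trek system $T_2$ without sided intersection, and combine them into a $k$-split-trek system between $S_1,\ldots,S_k$ without sided intersection, forcing $\text{det }\mathcal{N}^{(k)}_{S_1,\ldots,S_k}\neq 0$. The only difference is one of detail: you make explicit the gluing step (the bijection between treks, the verification of the top condition in Definition~\ref{def_trek_moment}, and the side-disjointness) that the paper compresses into the phrase ``combining together $T_1$ and $T_2$,'' and your observation that the lower-order inputs require only the elementary direction furnished by Proposition~\ref{det_T_trek_system_m_moments}, while only the order-$k$ step genuinely rests on the conjecture, is a correct refinement.
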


\begin{proof} We are going to show the contrapositive statement. Suppose that there exists $2\leq h\leq k-2$ and a partition $\{1,\ldots, k\} = \{i_1,\ldots, i_k\}\cup \{ j_1,\ldots, j_{k-h}\}$ such that both$\,$ det $\mathcal{N}^{(h)}_{S_{i_1},\ldots, S_{i_h}}\neq 0$ and$\,$ det $\mathcal{N}^{(k-h)}_{S_{j_1},\ldots, 
S_{j_{k-h}}}\neq 0$. Then, by Conjecture~\ref{main_theorem_moments} there exists an $h$-split-trek system $T_1$ with no sided intersection between $S_{i_1},\ldots, S_{i_h}$ and a $(k-h)$-split-trek system $T_2$ with no sided intersection between $S_{j_1},\ldots, S_{j_{k-h}}$. Then, combining together $T_1$ and $T_2$, we get a valid $k$-split-trek system between $S_1,\ldots, S_k$ with no sided intersection, which then implies $\text{det } \mathcal N^{(k)}_{S_1,...,S_k} \neq 0$.
\end{proof}

\begin{example}
We illustrate Proposition~\ref{lower_order_moments} for the case $k = 4$ and $h =2$.. In the graph from Figure~\ref{example_relationship_moments}(a), det $\mathcal{N}^{(4)}_{S_1,S_2,S_3,S_4} = 0$. In Figure~\ref{example_relationship_moments}(b), det $\mathcal{N}^{(2)}_{S_1,S_2} = 0$ and in Figure~\ref{example_relationship_moments}(c), det $\mathcal{N}^{(2)}_{S_3,S_4} \neq 0$.
\begin{figure}[h!]
	\centering
	\begin{subfigure}[b]{0.25\linewidth}
	\includegraphics[width=\linewidth]{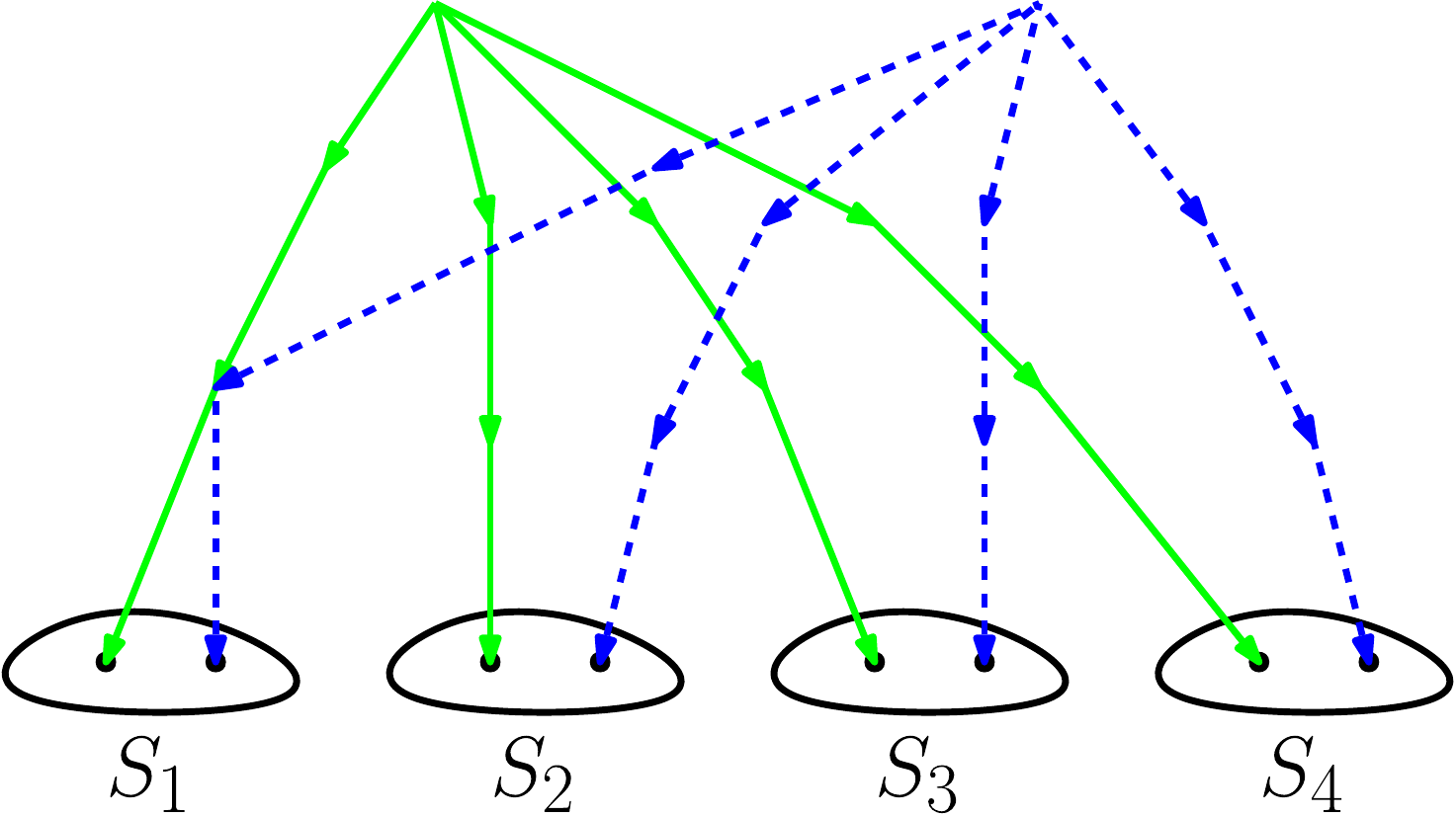}
	\caption{\label{fig1_rel_moments}}
	\end{subfigure}\hspace{0.1\textwidth}%
	\begin{subfigure}[b]{0.2\linewidth}
		\includegraphics[width=\linewidth]{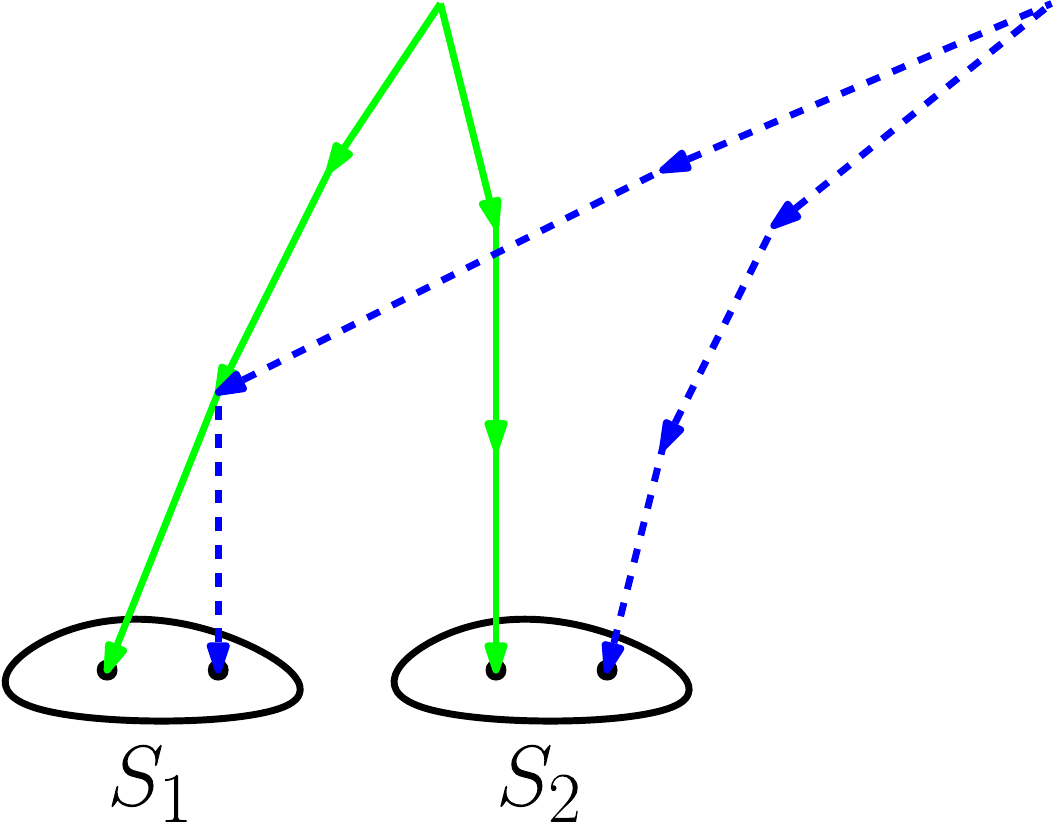}
	\caption{\label{fig2_rel_moments}}
	\end{subfigure}\hspace{0.1\textwidth}%
	\begin{subfigure}[b]{0.2\linewidth}
		\includegraphics[width=\linewidth]{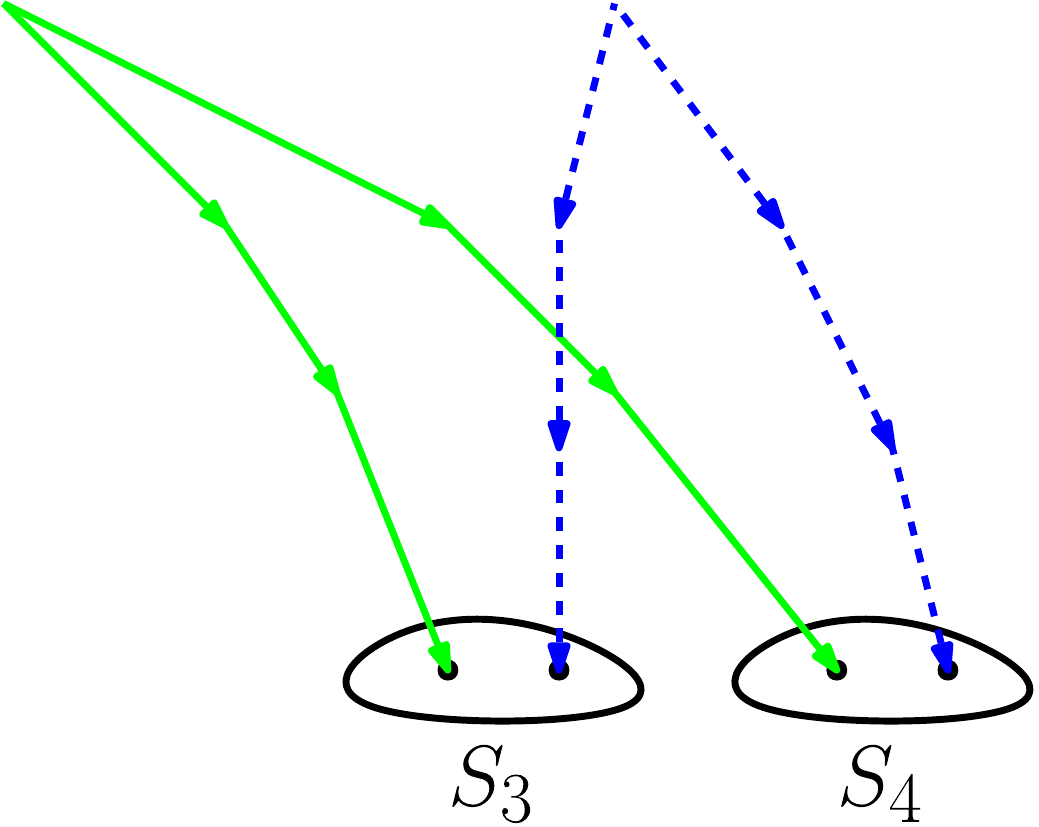}
	\caption{\label{fig3_rel_moments}}
	\end{subfigure}\hspace{0.1\textwidth}%
	\caption{}
	\label{example_relationship_moments}
\end{figure}
\end{example}

\noindent
Proposition~\ref{lower_order_moments} would give a nice relationship between the vanishing of determinants of high-order moment tensors and low-order moment tensors. One of our initial hopes was that in the case of Gaussian random variables, the vanishing of high-order moment determinants would be able to explain constraints in the model that are not subdeterminants of the covariance matrix (sometimes called Verma constraints)~\cite{Verma1991}. However, Proposition~\ref{lower_order_moments} implies that if a high-order moment determinant vanishes, then so does a lower-order one. On the other hand there are lots of Gaussian graphical models, for which there are no covariance determinants vanishing~\cite{Drton2018}, thus the vanishing of determinants would not suffice to describe the model.

\section{Conclusion}\label{section_6}

In this paper, we give implicit constraints on linear non-Gaussian structural equation models by providing a relationship between the vanishing of subdeterminants of the tensors of $k$-th order cumulants and a combinatorial criterion on the corresponding graph. Specifically, we show that the determinant of the subtensor of the $k$-th order cumulants for $k$ sets of vertices with equal cardinality vanishes if and only if there is no system of $k$-treks between these sets without sided intersection. One of the main contributions of our work is the introduction of multi-directed edges in the hidden variable case, and our multi-trek criterion which allows us to, for example, detect the presence of a common cause of multiple vertices.

A few questions for further research remain. As shown in Example \ref{example_main_theorem}, Theorem \ref{main_theorem} gives a criterion for checking if random variables have a common cause or not. It would be interesting to build a test statistic based on this criterion. Furthermore, 
our criterion can be coupled with existing algorithms to recover a mixed graph with multi-directed edges from observational data.

\noindent
\section*{Acknowledgements}
We would like to thank Mathias Drton for suggesting the problem. Elina Robeva was partially supported by an NSF Postdoctoral Fellowship (DMS 1703821).
\nocite{*}
\bibliographystyle{acm}

\appendix\label{Appendix}
\section{Proof of Cauchy-Binet for tensors}\label{Appendix A}
\noindent
We prove a tensor version of the Cauchy-Binet Theorem~\cite{Broida} (Lemma~\ref{Cauchy-Binet theorem}) for the determinant of the product $AB$, where $A$ is a tensor of $k$-th order and $B$ is a matrix. We then apply this proposition to the Tucker decomposition of tensors $
\mathcal E^{(k)}\bullet (I-\Lambda)^{-k}$ in the proof of Proposition~\ref{det_T_trek_system_m}.

\noindent
\begin{proof}[Proof of Lemma~\ref{Cauchy-Binet theorem}]
We will present the proof assuming that we multiply the matrix $B$ along the second dimension of $A$. Notice that the proof would have followed the same reasoning should we have multiplied along any other dimension. When we multiply $B$ along the second dimension of $A$, the entry $c_{i_1...i_k}$ of the product $C = AB$ is given by:
\begin{equation}
    c_{i_1...i_k} = \sum_{l} A_{i_1li_3...i_k}b_{li_2}.
\end{equation}
\noindent
Let's adopt the following notation:
\begin{itemize}
    \item $F$ is the set of functions with domain $[p]:= \{1,...,p\}$ and range $[n]:= \{1,...,n\}$
    \item $G$ is the set of functions in $F$ that are injective 
    \item $H$ is the set of strictly increasing functions that map $p$ ($p<n$) elements in $[n]$ to $p$ elements in $[n]$
    \item $\mathfrak{S}$ is the set of permutations of the elements $\{1,...,p\}$
    
We use the following equality
\begin{equation}\label{equality_CB}
    \prod_{i=1}^{p} \sum_{l} A_{il\sigma_3(i) ...\sigma_{k}(i)} B_{l \sigma_2(i)} = \sum_{f \in F} \prod_{i=1}^{p} A_{if(i)\sigma_3(i)...\sigma_{k}(i)}B_{f(i)\sigma_2(i)}
\end{equation}
    
\end{itemize}
Using Definition~\ref{definition_determinant_tensor} for the tensor determinant, we have:
\begin{equation}
\begin{aligned}
\text{det}(AB) &= \sum_{\sigma_2,...,\sigma_{k}} \text{sign} (\sigma_2) ... \text{sign} (\sigma_{k})  \prod_{i=1}^{p} (AB)_{i \sigma_2(i) ... \sigma_{k}(i)}\\
&= \sum_{\sigma_2,...,\sigma_{k}} \text{sign} (\sigma_2) ...  \text{sign} (\sigma_{k}) \prod_{i=1}^{p} \sum_{l} A_{il\sigma_3(i) ...\sigma_{k}(i)} B_{l \sigma_2(i)},\\
&\text{ from the definition of the product of a tensor by a matrix} \notag\\
&= \sum_{\sigma_2,...,\sigma_{k}} \text{sign} (\sigma_2) \cdots\text{sign} (\sigma_{k}) \sum_{f \in F} \prod_{i=1}^{p} A_{if(i)\sigma_3(i)...\sigma_{k}(i)}B_{f(i)\sigma_2(i)}, \text{from equality (\ref{equality_CB})}\\
&= \sum_{f \in F} \Bigg( \sum_{\sigma_3,...,\sigma_{k}} \text{sign}(\sigma_3)... \text{sign}(\sigma_{k}) \prod_{i=1}^{p} A_{if(i)\sigma_3(i)...\sigma_{k}(i)}\Bigg) \Bigg( \sum_{\sigma_2} \text{sign }(\sigma_2) \prod_{i=1}^{p}  B_{f(i)\sigma_2(i)}\Bigg)\\
&\text{by re-arranging terms}\notag\\
&=\sum_{f \in F} \Bigg( \sum_{\sigma_3,...,\sigma_{k}} \text{sign}(\sigma_3)... \text{sign}(\sigma_{k}) \prod_{i=1}^p A_{if(i)\sigma_3(i)...\sigma_{k}(i)}\Bigg) \text{det} (B_{f}), \\
&\text{from the definition of the determinant of a matrix, and where } B_f \text{ is the submatrix } B \text{ whose rows are selected by } f \\
&=\sum_{f \in G} \Bigg( \sum_{\sigma_3,...,\sigma_{k-1}} \text{sign}(\sigma_3)... \text{sign}(\sigma_{k}) \prod_{i=1}^{p} A_{if(i)\sigma_3(i)...\sigma_{k}(i)}\Bigg) \text{det} (B_{f}) \ \text{because} \ \text{det}(B_{f}) = 0 \ \text{for } f \notin G\\
&=\sum_{h \in H} \sum_{\gamma \in \mathfrak{S}}\Bigg( \sum_{\sigma_3,...,\sigma_{k}} \text{sign}(\sigma_3)... \text{sign}(\sigma_{k}) \prod_{i=1}^{p} A_{ih(\gamma(i))\sigma_3(i)...\sigma_{k}(i)}\Bigg) \text{det}(B_{h(\gamma)}) \\
\end{aligned}
\end{equation}
\noindent
Since $A_{ih(\gamma(i))\sigma_3(i)...\sigma_{k}(i)} = (A_{i\gamma(i)\sigma_3(i)...\sigma_{k}(i)})_h$  where $(A_{i\gamma(i)\sigma_3(i)...\sigma_{k}(i)})_h$   is the submatrix of $A$ with columns selected by $h$, and $\text{det}(B_{h(\gamma)}) = \ \text{sign} (\gamma) \ \text{det}(B_h)$, we have:
\begin{equation}
\begin{aligned}
\text{det}(AB) &= \sum_{h \in H} \Bigg(\sum_{\gamma \in \mathfrak{S}} \sum_{\sigma_3,...,\sigma_{k}} \text{sign}(\gamma) \ \text{sign}(\sigma_3)... \text{sign}(\sigma_{k}) \prod_{i=1}^{p} (A_{i\gamma(i)\sigma_3(i)...\sigma_{k}(i)})_h\Bigg) \text{det}(B_h)\\
&= \sum_{h \in H} \text{det} (A_h) \ \text{det}(B_h).
\end{aligned}
\end{equation}
\end{proof}

\section{Proof of Proposition \ref{det_T_trek_system_m_moments}}\label{Appendix D}

\begin{proof}
\noindent
By applying the tensor version of the Cauchy-Binet Theorem $k$ times to equation (\ref{equation_T_Phi_Lambda}), we get:
\begin{equation}\label{equation_T_after_Cauchy_Binet_case_moment}
\text{det }\mathcal{N}^{(k)}_{S_1,...,S_k} = \sum\limits_{R_1,...,R_k} \text{det }\Phi^{(k)}_{R_1,...,R_k} \
 \text{det} (I-\Lambda)^{-1}_{R_1,S_1} \ ... \ \text{det}(I-\Lambda)^{-1}_{R_k,S_k}
\end{equation}
Additionally, from equation (\ref{equation_T_after_Cauchy_Binet_case_moment}), we can write
\begin{equation}
\begin{aligned}
    \mathcal{N}^{(k)}_{i_1,...,i_k} &= \sum_{T \in \mathcal{S}(i_1,...,i_k)} \text{sign}(T) \ m_T,
\end{aligned}
\end{equation}
where $m_T$ is the $k$-split-trek monomial defined by $m_T = {\phi_{top(P_{1},..., P_{k})} \lambda^{P_{1}}} ....  \lambda^{P_{k}}$.

Assuming that $\#R_1= ...=\#R_k = \#S_1 = ... = \#S_k = n$, we then get:
\begin{equation*}
\begin{aligned}
\text{det } \mathcal{N}^{(k)}_{S_1,...,S_k} \ &= \sum_{\sigma_2 \in \mathfrak{S}_{n_{2}},..., \sigma_{k} \in \mathfrak{S}_{n_{k}}} \Bigg(\sum_{\substack{T_1 \in \mathcal{S}(s_1, s_{\sigma_2(1)},
...,s_{\sigma_{k}(1)})\\
...\\
...\\
T_n \in \mathcal{S}(s_n, s_{\sigma_2(n)},...,s_{\sigma_{k}(n)})}} \text{sign}(\sigma_2) \ldots \text{sign}(\sigma_{k})\ m_{T_1}...m_{T_n} \Bigg)\\
&= \sum_{\substack{\sigma_i \in \mathfrak{S}_{n_i}\\
i\in \{2,\ldots,k\}}} \Bigg(\sum_{\substack{T_j \in \mathcal{S}(s_j, s_{\sigma_2(j)},
...,s_{\sigma_{k}(j)})\\
j \in \{1,\ldots,n\}}}
 \prod_{l = 2}^{k}\text{sign}(\sigma_l)\prod_{s = 1}^{n} m_{T_s}\Bigg)\\
&= \sum_{T\in\mathcal S(S_1,\ldots, S_k)} \text{sign}(T) \  m_{T}
\end{aligned}
\end{equation*}

\noindent
where $\mathfrak{S}_{n_{i}}$ is the set of permutations of the nodes in $S_{i+1}$, $T$ runs over all $k$-split-trek systems between $S_1,...,S_k$ and sign($T$) = $\text{sign}(\sigma_2) \ ...\ \text{sign}(\sigma_{k})$. In this expression, we have $m_{T} = \prod_{x=1}^{n} m_{T_x}$, where $m_{T_x}=\Phi_{top(P_{s_{1}},...,P_{s_{k}})}\lambda^{P_{s_{1}}}...\lambda^{P_{s_{k}}}$, i.e., $m_{T}$ is the product of the monomials of the $n$ $k$-split-treks that form the $k$-split-trek-system $T$.

Similarly to the proof of Proposition~\ref{det_T_trek_system_m}, we can use the Gessel-Viennot-Lindstrom Lemma to show that the sum in the expression of det $\mathcal N_{S_1,\ldots, S_k}^{(k)}$ can be taken over the set $\tilde{\mathcal S}(S_1,\ldots, S_k)$ of $k$-split-trek systems between $S_1,\ldots, S_k$ without sided intersection.
\end{proof}
\end{document}